\let\origcitation\citation
  \def\citation#1{\g@addto@macro\mycites{#1^^J}\origcitation{#1}}}
\write\citeout{\mycites}\immediate\closeout\citeout}
\def\phi{\varphi}
 \def\bflam{{\boldsymbol \lambda}}
\def\CC{\mathbb{C}}
 \def\RR{\mathbb{R}}
 \def\NN{\mathbb{N}}
\def\ZZ{\mathbb{Z}}
\def\lap{\mathcal{L}}
\def\bfk{\mathbf{k}}
\newtheorem{Proposition}{Proposition}
  \newtheorem{Corollary}[Proposition]{Corollary}
  \newtheorem{Lemma}[Proposition]{Lemma}
  \newtheorem{Theorem}[Proposition]{Theorem}
 \newtheorem{Definition}[Proposition]{Definition}
\newtheorem*{Proposition*}{Proposition}
 \newtheorem{Note}[Proposition]{Note}
\newcommand {\z}{{\noindent}}
\def\No{{\bf No}}
\def\N0{\mathbf{No_0}}
\def\XXint#1#2#3{{\setbox0=\hbox{$#1{#2#3}{\int}$}
     \vcenter{\hbox{$#2#3$}}\kern-.5\wd0}}
\def\({\left(}
\def\){\right)}
\def\ge{\geqslant}
\def\le{\leqslant}
\def\epsilon{\varepsilon}
\def\DD{\mathbb{D}}
\def\CC{\mathbb{C}}
 \def\RR{\mathbb{R}}
 \def\NN{\mathbb{N}}
\def\ZZ{\mathbb{Z}}
\def\QQ{\mathbb{Q}}
\def\Re{\mathrm{Re\,}}
\def\re{\mathrm{re\,}}
\def\re{\mathrm{re\,}}
\def\bfy{\mathbf{y}}
\def\No{\mathbf{No}}
\title[Integration on the surreals]{Integration on the surreals: A Conjecture of Conway, Kruskal and Norton.}
\author {Ovidiu Costin}
\address{Mathematics Department\\The Ohio State University\\231 w 18th Ave\\Columbus 43210\\costin@math.ohio-state.edu, corresponding author.}
\author {Philip Ehrlich}
\address{Department of Philosophy, Ohio University\\ Athens 45701\\ehrlich@ohio.edu}
\author {Harvey M. Friedman}
\address{Distinguished University Professor of Mathematics, Philosophy, and  
Computer Science, Emeritus, The Ohio State University. Mathematics Department\\The Ohio State University\\231 w 18th Ave\\Columbus 43210\\friedman@math.ohio-state.edu}
\begin{document}
\maketitle

\begin{abstract} 
  In 1976 Conway introduced the \emph{surreal} number system $\No$, containing the reals, the ordinals, and numbers such as  $ - \omega $,  $1/\omega$,  $\sqrt \omega$ and $\ln\omega$. $\No$  is a real closed ordered field, with much additional structure. Surreal theory is conveniently developed within the class theory NBG, a conservative extension of ZFC.

 A longstanding aim has been to develop analysis on $\No$ as a  powerful extension of ordinary analysis on $\RR$. This entails finding a natural way of extending important functions $f:\RR
 \to\RR$ to functions $f^*:\No \to\No$, and naturally defining
 integration on the $f^*$. The usual square root,
  $\log:\RR\to \RR$, and $\exp:\RR\to \RR$ were naturally  extended to $\No$ by
 Bach, Conway, Kruskal, and Norton, retaining their usual
 properties. Later Norton also proposed a treatment of
 integration, but  Kruskal discovered flaws.
 The search for natural extensions from $\RR$ to $\No$, and natural
 integration on $\No$ continues. This paper addresses this and
 related unresolved issues with positive and negative
 results.
 
 In the positive direction, we show that \'Ecalle-Borel
 transseriable functions extend naturally to $\No$, and  an  integral with good properties exists on them. Transseriable functions include semi-algebraic, semi-analytic, analytic, and meromorphic ones as well as solutions of systems of
 linear and nonlinear systems of ODEs with possible irregular
 singularities as in \cite{IMRN}. In particular, most
 classical special functions (such as Airy, Bessel, Ei, erf, Gamma, Painlev\'e and so on) extend naturally (and are integrable) from finite to infinite values of the variable.

 In the negative direction, we show there   is a fundamental obstruction to naturally extending many larger families 
of functions to $\No$ and 
 to defining integration on surreal functions. We show that there are no descriptions of operators  which can be
 proved  within NBG  to have  the basic properties of integration,  even on  highly
 restricted families of real-valued entire functions.

 \noindent
 
 \textbf{Keywords: surreal numbers, surreal integration, divergent asymptotic series, transseries}. 
 
 \smallskip
 \noindent \textbf{MSC classification numbers: Primary  03E15, 03H05,12J15, 34E05; Secondary 03E25, 03E35, 03E75}
\end{abstract}
\tableofcontents
\section{Introduction}

In his seminal work \emph{On Numbers and Games} \cite{CO1}, J. H. Conway introduced a real-closed field \textbf{No} of \emph{surreal numbers}, containing the reals and the ordinals $0,...., \omega,...$, as well as a great many new numbers, including  $ - \omega $,  $\omega /2$,  $1/\omega $,  $\sqrt \omega $, ${e^\omega}$, $\log \omega $ and $\sin \left (1/\omega  \right)$ to name only a few.  Motivated in part by the hope of providing a new foundation for asymptotic analysis, there has been a longstanding
program, initiated by Conway, Kruskal and Norton, to develop analysis on $\No$  as a powerful extension of
ordinary analysis on the ordered field $\RR$ of reals.  This entails finding a reasonable way
of extending important functions from $\RR$ to $\No$, and to 
define integration on the extensions.

In classical analysis over $\RR$, many transcendental integrals such as $\int_a^x t^{-1}e^tdt$  and special functions that arise as solutions of ODEs have divergent asymptotic series as $x\to\infty$, roughly of the form $\sum c_k x^{-k}$, where the $c_k$ grow factorially. In $\No$, on the other hand, the same sums {\em converge in a natural sense} that we call {\em absolute convergence in the sense of Conway} (see \S3) for all $x\gg 1$. Part of the original motivation for developing surreal integration was the expectation of finding new and more powerful methods for solving such ODEs and summing such divergent series.

  There was initial success with the first part of the program when the square root, 
$\log$, and $\exp$ were extended to \textbf{No} in a property--preserving fashion \cite{vdDE} by Bach, Norton, Conway, Kruskal and Gonshor (\cite[pages 22, 38]{CO2}, \cite[Ch. 10]{GO}). 
Norton's proposed ``\emph{genetic}" definition of integration \cite[page 227]{CO2} addressed the  second part of the program.\footnote{In surreal theory, the field operations on $\No$ as well as the square root, log and exp functions are defined by induction on the complexity of the surreals. Conway has dubbed such definitions ``genetic definitions" \cite[pages 27, 225, 227]{CO2}. In the integration program originally envisioned by Conway, Kruskal and Norton, the functions as well as their integrals were intended to be genetically defined. At present, however, there is no adequate theory of genetic definitions in the literature, though \cite{FO} and \cite{Fornasiero08} contain some useful preliminary remarks. Nevertheless, we will freely refer to various definitions that we define in terms of the complexity of the surreals as ``genetic" (even when they are not inductively defined) since for the most part they conform to the way this notion is informally understood in the literature.\label{f1}} However,  Kruskal subsequently showed the definition is flawed \cite[page 228]{CO2}. Despite this disappointment, the search for a theory of surreal integration has continued \cite{FO}, \cite{RSAS}. Indeed, in his recent survey  \cite[page 438]{SI}, Siegel characterizes the question of the existence of a reasonable definition of surreal integration as ``perhaps the most important open problem in the theory of surreal numbers''.

 Despite being flawed, Norton's integral is highly suggestive: indeed, for a fairly wide range of functions arising from applications, such as solving ODEs, we show that the use of  inequalities coming from transseries and  exponential asymptotics leads to a correct genetic integral. To prove the integral is well defined (in contradistinction to Norton's \cite[page 228]{CO2}) we use \'Ecalle analyzability results, (results that are not required, however, to calculate the integral).  Our constructions also provide a method for solving ODEs in $\No$. However, for more general functions, a substantial obstruction arises which involves considerations from the foundations of mathematics. In particular, we will show that, in a sense made precise, there is no
description which, provably in $\text{NBG}$, defines an integral (of Norton's
type or otherwise) from finite to the infinite domain, even on spaces
of entire functions that rapidly decay towards $+\infty$.
\subsection{Definitions and notation}
We use the space
$T[\mathbb{R}]$  of all functions $f$ whose domain dom$(f)$ is an interval
 in $\mathbb{R}\cup\{-\infty,\infty\}$ and whose range ran$(f)$ is a subset
 of  $\mathbb{R}$.  Empty and singleton intervals are likewise allowed. We define
$\lambda f$ and $f+g$ as usual, where dom$(f+g)$ = dom$(f)\cap$dom$(g)$. For
such intervals $I$ in $\mathbb{R}\cup\{-\infty,\infty\}$, $I^*$ is the corresponding interval in
$\No$ with the same real inf and sup where appropriate and 
$\pm{On}$ in place of $\pm{\infty}$, respectively. $T[\No]$ is the analogous
space of all functions whose domain is an interval of $\No$ whose values lie in $\No$. We
say that $f \in T[\RR]$ is extended by $f^*\in T[\No]$ if and only if
for all $x \in$ dom$(f)$, $f(x) = f^*(x)$, and dom$(f^*) $= dom$(f)^*$.
If $F\in T[\No]$ we denote by $F|_{\RR}$ its restriction to dom$(F)\cap\RR$.

In the positive results in our paper, the integral will be defined from a linear antiderivative. The integral is in a sense a generalization of the Hadamard partie finie integral from infinity. The negative results only need to hold on restricted spaces, and we choose subspaces of 
  \begin{equation}
    \label{eq:defE}
 \mathcal{E}:=\{f:\RR\to\RR:\exists g:\CC\to\CC\; \text{such that} \;g \;\text{is entire and}\; g|{\RR}=f\}. 
  \end{equation}
 These functions have a common domain, and an integral from zero gives rise to a linear antidifferentiation operator. 

Building on results of the first and third authors in \cite{JFA}, the purpose of this paper is to address these two issues:
\vspace{5pt}

1. Which natural families of functions in $T[\mathbb{R}]$ can be naturally lifted to families of $f^*\in T[\No]$, where each $f^*$ extends 
$f$?

2. Is there a natural antidifferentiation operator on the families of  $f^*\in T[\No]$, arising from 1? 

As we show, the existence of  a natural antidifferentiation leads to the existence of a natural integral. This integral follows Norton's scheme.

\vspace{5pt} Both of these questions are treated here in terms of
operators on spaces of real functions.

\begin{Definition}
  \rm{An \emph{extension operator} on $ K \subseteq T[\RR]$ is a function $\mathsf{E}:K \to
T[\No]$ such that for all $f,g \in K$:
\begin{enumerate}[i.]
  \item $\mathsf{E}(f) $ extends $f$;
  \item $\mathsf{E}(\lambda f)$ = $\lambda \mathsf{E}(f)$, $\mathsf{E}(f+g) = \mathsf{E}(f)+\mathsf{E}(g)$;
  \item If $f$ is the real monomial $x^n$ on dom$(f)$, then $\mathsf{E}(f)$ is the
surreal monomial $x^n$ on dom$(f)^*$, for all $n\in\NN$;
  \item If $f\in K$ is the real exponential on dom$(f)$, then $\mathsf{E}(f)$ is the surreal  
exponential on dom$(f)^*$.
\end{enumerate}
}
\end{Definition}

To formulate the appropriate notion of an antidifferentiation operator, we require a generalization of the idea of a \emph{derivative of a function at a point}.

 \begin{Definition}\label{DefDeriv}{\rm 
 Mimicking the usual definition, we  say that $f$ is differentiable at $a$ in an ordered field $V$ if there is an $f'(a)\in V$ such that $(\forall \epsilon >0 \in V)(\exists \delta >0 \in V)$ such that $(\forall x\in V)(|x-a|<\delta\Rightarrow |(f(x)-f(a))/(x-a)-f'(a)|<\epsilon)$. As usual, $f'(a)$ is said to be \emph{the derivative of $f$ at $a$}.
}\end{Definition}

\begin{Definition}\label{Dd2}
  \rm{An \emph{antidifferentiation operator} on $K \subseteq T[\RR]$ is a function
$A:K \to T[\No]$ such that for all $f,g \in K$:
\begin{enumerate}[i.]
  \item $A(f)'$ extends $f$;
  \item $A(\lambda f)$ = $\lambda A(f)$, $A(f+g) = A(f)+A(g)$;
  \item If $f$ is the real monomial $x^n$, then $A(f)$ is the
surreal monomial $x^{n+1}/(n+1)$ on dom$(f)^*$;
  \item If $f\in K$ is the real exponential on dom$(f)$, then $A(f)$ is the
surreal exponential on dom$(f)^*$.
\end{enumerate}
}
\end{Definition}{\rm 
For suitable integrals to exist, we need the ``second half'' of the fundamental theorem of  calculus to hold. This is the motivation for the following strengthening of Definition \ref{Dd2}.
\begin{Definition}\label{DefAf}{\rm  
\emph{A strong  antidifferentiation operator} on $K \subseteq T[\RR]$ is an antidifferentiation operator $A$ such that if $F\in T[\No]$ and $(F|_{\RR})'=f\in K$ exists, then there is a $C\in\No$ such that $A(f)=F+C$.
  }\end{Definition}

Our first group of results show that there are (unnatural) extension and antidifferentiation 
operators on $T[\mathbb{R},\RR]:=\{f\in T[\RR]:\mathrm{dom}(f)=\RR\}$ correctly acting on  finitely many, or even all monomials. For finitely many monomials, the proof is constructive. For infinitely many, the proof uses the classical result that every vector space has a basis extending any given linearly independent set. 
Of course, such a proof does not lead to any reasonably defined examples of extension 
or antidifferentiation operators. 
In fact, our negative results, discussed below, establish that this drawback is 
unavoidable--even for much more restrictive $K$'s. 

\vspace{5pt}
Our second group of results are negative, and establish, in several related senses, 
that there are no reasonable extension or antidifferentiation operators on the 
space of all real-valued entire functions, even if we restrict ourselves to those with slow 
growth. Specifically, we show that there are no descriptions, which, provably, uniquely define such extensions or antidifferentiations, even in the presence of the axiom of choice. Our negative results show that, in order to naturally 
extend families of real-valued entire functions past $\infty$ into the infinite surreals, 
critical information about the behavior of the functions at $\infty$ is required. For families of 
real-valued entire functions characterized by growth rates only, we show that polynomial 
growth forms a threshold for extendability (in a sense made precise). As a consequence of Liouville's 
theorem, this entails that such classes of functions consist entirely of polynomials. 
Analogous results are also shown to apply to antidifferentiation.

Our third group of results construct natural  extension and strong antidifferentiation operators 
with very nice features. Proposition \ref{existint} below shows that the latter suffices for the existence of an actual integral. In fact, the antiderivative defines an {\em  integral} with essentially all the properties 
of the usual integral. Moreover, when restricted to $\RR$, the antiderivative is a proper extension of the Hadamard finite part at $\infty$; see \cite{JFA}. This group of results is applicable to sets of functions that, at $\infty$, are
meromorphic,  semi-analytic, Borel summable, or lie in a class of \'Ecalle-Borel
transseriable functions; see \cite{Book},\cite{Ecalle2}. As such, the sets of functions to which our positive results apply include most sets of standard classical functions. We argue that for all ``practical purposes" in applied analysis, integrals and 
natural extensions from the reals to the surreals with good properties exist. In particular, most
 classical special functions (such as Airy, Bessel, Ei, erf, Gamma, Painlev\'e,...) extend naturally (and are integrable) from finite to infinite values of the variable.\footnote{Integration, for functions with convergent expansions, has been studied in the context of the non-Archimedean ordered field of left-finite power series with real coefficients and rational exponents in \cite{SB} and \cite{S}.}

In \S \ref{XX}, Logical Issues, we discuss appropriate
formal systems in which to cast the results. We identify
two main approaches to formalization: literal and classes.
NBG is the principal system for the classes approach, which
is the approach used in the body of the paper. For the
literal approach, we use ZCI = ZC + ``there exists a
strongly inaccessible cardinal". Both approaches have their
advantages and disadvantages. The underlying mathematical
developments are sufficiently robust as to be unaffected by
these logical issues. As a consequence, the reader
unconcerned with logical issues need not read \S \ref{XX}.

\vspace{5pt}

This paper is the result of an interdisciplinary collaboration making use of the 
first author's expertise in  standard and non-standard analysis,  exponential asymptotics and  Borel summability, the 
first and second authors' expertise in surreal numbers, and the third author's expertise 
in mathematical logic and the foundations of mathematics.

\subsection*{Organization of the paper}
The paper relies on general notions, constructions and results from the theory of surreal numbers. An overview of those that are most relevant to this paper are given in \S\ref{S2}. For the most general positive results, the paper also uses  transseries and \'Ecalle-Borel summability. These are reviewed in \S\ref{Sec5.3}.

\section{Main results}

Our first set of results prove the existence of surreal
antiderivatives within NBG (which includes the axiom of choice).
Another set of results show that  $\text{NBG}^-$ + DC cannot prove their
existence, even in sharply weakened forms.  Here $\text{NBG}^-$ is NBG without any form of the axiom of choice, and DC is dependent choice, which is an important weak
form of the axiom of choice; see \S \ref{XX}. Thus, in fairly general
settings, the existence of antidifferentiation operators in the
surreals is neither provable nor refutable in
NBG$^-$ + DC.

We also prove that there are no explicit descriptions that
can be proved to uniquely define such operators, even in NBG. On the other hand, we show, in NBG$^-$, that 
genetically  defined integrals
 (see footnote \ref{f1}) exist for functions
arising  in all ``practical applications.''

\subsection{Unnatural positive results}\label{Un+}
The results in this section  do not lead to natural operators or even natural functionals.  Specifically, while these operators or functionals  assume, by construction,  the expected values on simple functions, for general functions their existence relies on the axiom of choice and as such their values for specific functions are intrinsically ad hoc.  
\begin{Theorem}\label{Ad-hoco}{\rm 
  Let $\{1,x,...,x_n\}$ be the set of monomials of degree $<n+1$, and $W$ be the linear space generated by them (i.e., the polynomials of degree $<n+1$). Then there is an antidifferentiation operator $A$ on $T[\RR,\RR]$ such that $A(x^{j-1})=x^{j}/j, j=1,...,n+1$.
}\end{Theorem}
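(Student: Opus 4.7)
The plan is to construct $A$ by a Hamel basis argument, lifting the obvious action on monomials and on $e^{x}$ to an operator on all of $T[\RR,\RR]$ by means of the axiom of choice (available in NBG). Regard $T[\RR,\RR]$ as an $\RR$-vector space under pointwise operations, and note that $S=\{1,x,x^{2},\ldots,x^{n},e^{x}\}$ is $\RR$-linearly independent: the monomials are independent because no non-zero real polynomial of degree $\leq n$ vanishes identically on $\RR$, and $e^{x}$ lies outside their span by a growth comparison at $+\infty$. By the Hamel basis theorem (a consequence of Zorn's Lemma), extend $S$ to an $\RR$-basis $B$ of $T[\RR,\RR]$.

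Now define $A$ on $B$ and extend by linearity. On the privileged basis vectors, set $A(x^{j}):=x^{j+1}/(j+1)$ for $j=0,\ldots,n$, viewed as surreal monomials in $T[\No]$ on $\dom(x^{j})^{*}=\No$; and set $A(e^{x})$ equal to the surreal exponential on $\No$ constructed by Bach, Norton, Conway, Kruskal and Gonshor. For each remaining $b\in B\setminus S$, invoke AC to pick some $F_{b}\in T[\No]$ with $F_{b}'$ extending $b$. Then for $f\in T[\RR,\RR]$ with unique finite basis expansion $f=\sum_{b\in B}c_{b}b$, set $A(f):=\sum_{b}c_{b}A(b)$. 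Condition (ii) of Definition \ref{Dd2} holds by construction; condition (iii) on the listed monomials --- hence the required equalities $A(x^{j-1})=x^{j}/j$ for $j=1,\ldots,n+1$ --- and condition (iv) both hold by the definition on the privileged basis vectors; and condition (i) follows from $\RR$-linearity of the surreal derivative together with the fact that $A(b)'$ extends $b$ on every basis element $b$.

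The main obstacle is the non-constructive step of producing $F_{b}\in T[\No]$ with $F_{b}'$ extending $b$ for an arbitrary $b\in B\setminus S$: since $T[\RR,\RR]$ contains real functions with no real antiderivative, one must either interpret the clause ``$A(f)'$ extends $f$'' liberally where no antiderivative exists, or arrange the basis extension so as to select basis vectors admitting such $F_{b}$. In either case this step is exactly why the resulting $A$ is ``unnatural'': no canonical value for $A(b)$ is available on a generic basis element, and the construction depends on arbitrary choices supplied by AC. The genuinely constructive content of the theorem is confined to the finite-dimensional subspace $W\oplus\mathrm{span}(e^{x})$, where $A$ is entirely pinned down by the conditions on the listed monomials and the exponential; AC is invoked only to extend $A$ across the complement.
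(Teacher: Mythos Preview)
Your Hamel-basis approach is not wrong in NBG, but it misses the point of this particular theorem and leaves a real gap unfilled. The paper emphasizes (Note immediately following the theorem) that the proof is carried out as an \emph{explicit construction in NBG$^-$}, i.e.\ without any form of choice; the contrast with Theorem \ref{Ad-hoc1}, which handles all monomials and \emph{does} use choice, is deliberate. Your argument invokes Zorn's Lemma to extend $S$ to a Hamel basis, so it lives in NBG, not NBG$^-$.

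The paper's route avoids choice entirely by combining two explicit ingredients. First, an explicit linear antiderivative $A_T$ on all of $T[\RR,\RR]$: for finite $x=x_\RR+s$ with $x_\RR\in\RR$ and $s$ infinitesimal, set $A_T(f)(x)=f(x_\RR)\,s$ (and $0$ for $|x|>\infty$); one checks $(A_T(f))'(x_\RR)=f(x_\RR)$ for every $f$, so the existence problem you flag as the ``main obstacle'' simply does not arise. Second, an explicit projector $P$ onto the finite-dimensional space $W$ of polynomials of degree $\le n$: Lagrange interpolation at the nodes $0,1,\dots,n$. Then $A(f):=A_1(Pf)+A_T((1-P)f)$, with $A_1$ the obvious antiderivative on $W$, gives a constructive operator with the required values on the finitely many monomials.

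Note also that your ``main obstacle'' is genuinely a gap as written: you assert that for each $b\in B\setminus S$ there exists $F_b\in T[\No]$ with $F_b'$ extending $b$, but you do not prove this (and AC does not help produce something whose existence has not been established). The paper's $A_T$ is exactly what fills this gap --- and once you have it, the Hamel basis becomes unnecessary, since a finite-dimensional explicit projector suffices.
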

\begin{Note}
  {\rm  The proof of Theorem 4 is carried out as an explicit
construction in NBG$^-$.
Finitely many other
natural functions with natural derivatives can be incorporated.}
\end{Note}

The next result uses the axiom of choice.

\begin{Theorem}\label{Ad-hoc1}{\rm 
There is an antidifferentiation operator $A$ on $T[\RR,\RR]$ such that $A(x^{j-1})=x^j/j \;\text{for each} \;j\in\ZZ^+$. More generally, if $W_1\subset T[\RR,\RR]$ is  a space of  functions  that naturally extend to ${\bf No}$ in such a manner that $ F^*(x)-F^*(0)$ is a natural linear  antiderivative of $f(x)$, then $A(f)(x)=F^*(x)-F^*(0)$. Moreover, $W_1$ would include the real exp, $1/(x+1)$ and other elementary functions.}

\end{Theorem}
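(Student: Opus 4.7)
The plan is to invoke the axiom of choice, in the form that every linearly independent set in a real vector space extends to a Hamel basis, and to build $A$ by specifying its action on basis vectors. Since $T[\RR,\RR]$ is a real vector space and all requirements in Definition \ref{Dd2} are either linear in $f$ or concern specific functions ($x^n$, $\exp$), the construction reduces to fixing $A$ correctly on the distinguished functions and extending arbitrarily but compatibly on the rest.

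In detail, let $V \subseteq T[\RR,\RR]$ be the subspace of $f$ for which there exists some $F \in T[\No]$ with $F'$ extending $f$ in the sense of Definition \ref{DefDeriv}; this is a real linear subspace containing $W_1$, hence all polynomials, $\exp$, $1/(x+1)$, and the other elementary functions whose classical antiderivatives lift naturally to $\No$. Using the hypothesized natural linear antidifferentiation $A_0(f) := F^*(x) - F^*(0)$ on $W_1$, apply AC to pick a Hamel basis $B_1$ of $W_1$ containing the distinguished linearly independent family $\{x^n : n \in \NN\} \cup \{\exp, 1/(x+1), \ldots\}$, and then extend $B_1$ to a Hamel basis $B$ of $V$.

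Define $A$ on basis elements as follows: for $b \in B_1$, set $A(b) := A_0(b)$; for each $b \in B \setminus B_1$, invoke AC to select some $A(b) \in T[\No]$ with $A(b)'$ extending $b$, which is possible precisely because $b \in V$. Extend $A$ linearly from $B$ to $V$ (and, if desired, by zero on a complementary subspace to obtain an operator formally defined on all of $T[\RR,\RR]$). Verification is then immediate: linearity (ii) holds by construction; conditions (iii) and (iv) hold because $x^n, \exp \in B_1$ receive the stipulated values; condition (i) holds by linearity of differentiation combined with the defining property of each $A(b)$. Compatibility $A|_{W_1} = A_0$ follows from $B_1 \subset W_1$ plus linearity of both sides.

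The main obstacle is conceptual rather than technical: the AC-dependent selection of $A(b)$ for basis elements outside $W_1$ renders $A$ fundamentally non-canonical, which is precisely the point of the \emph{Unnatural positive results} section and contrasts with the explicit constructive proof of Theorem \ref{Ad-hoco} for finitely many monomials. A secondary subtlety is that $T[\RR,\RR]$ contains functions that cannot be derivatives of any surreal function (for instance, those not of Baire class $2$ on $\RR$), so condition (i) can hold strictly only on $V$; the operator is therefore meaningful only on $V$, as the statement implicitly requires.
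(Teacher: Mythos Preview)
Your overall strategy---use the axiom of choice to produce a Hamel basis of $T[\RR,\RR]$ extending a basis of $W_1$, define $A$ on $W_1$ via the given natural antiderivative and on the remaining basis vectors by choosing antiderivatives---matches the paper's in spirit. The paper packages the same idea as: obtain (via AC) a linear projector $P$ onto $W_1$, and set $A(f)=A_1(Pf)+A_T((1-P)f)$, where $A_T$ is an explicit linear antiderivative on all of $T[\RR,\RR]$ from an earlier lemma.

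The one genuine gap is your ``secondary subtlety.'' You claim that some $f\in T[\RR,\RR]$ (e.g.\ those not of Baire class~2) admit no $F\in T[\No]$ with $F'$ extending $f$, and therefore condition~(i) of Definition~\ref{Dd2} forces the operator to live only on a proper subspace $V$. This is a confusion between classical and surreal differentiation. The paper's Lemma~\ref{allantider} gives, for \emph{every} $f:\RR\to\RR$ whatsoever, the explicit surreal antiderivative
\[
A_T(f)(x)=f(x_{\RR})\,(x-x_{\RR})\quad\text{for finite }x,\qquad A_T(f)(x)=0\ \text{for }|x|>\infty,
\]
where $x_{\RR}$ is the unique real with $x-x_{\RR}$ infinitesimal. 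One checks immediately that $(A_T f)'(x_{\RR})=f(x_{\RR})$ at every real point, which is all condition~(i) requires. So your $V$ is in fact all of $T[\RR,\RR]$, your hedge about extending by zero is unnecessary, and the theorem really does produce an antidifferentiation operator on the full space. With this correction your argument goes through; the paper's version is just a bit tidier because it uses the single explicit $A_T$ on the complement rather than a separate AC choice for each basis vector outside $W_1$.
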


\begin{Theorem}
 {\rm
 There exist $2^\mathfrak{c}$ extension operators $\mathsf{E}$ on the set of all functions $f:\RR \to\RR$ which preserve polynomials and other known functions. 
}\end{Theorem}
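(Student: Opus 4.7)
The plan is to use the axiom of choice (available in NBG) to produce a Hamel basis of the real function space containing all functions whose extensions are prescribed by the definition, and then exploit the total freedom in choosing surreal extensions of the remaining basis elements to manufacture $2^{\mathfrak{c}}$ distinct operators.

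First, I would view $V := T[\RR,\RR]$ as an $\RR$-vector space under pointwise operations. The real monomials $\{x^n : n \in \NN\}$, the real exponential, and any countable collection of other ``known'' functions whose surreal counterparts are fixed in advance form a countable linearly independent subset $D \subseteq V$. Extend $D$ to a Hamel basis $B$ of $V$ by Zorn's lemma; since $|V| = \mathfrak{c}^{\mathfrak{c}} = 2^{\mathfrak{c}}$, we have $|B| = 2^{\mathfrak{c}}$, so $B_0 := B \setminus D$ also has cardinality $2^{\mathfrak{c}}$.

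Next, I would observe that any extension operator $\mathsf{E}$ is uniquely determined by its values on a basis: once we specify some $\mathsf{E}(b) \in T[\No]$ extending $b$ for each $b \in B$, we define $\mathsf{E}$ on all of $V$ by $\mathsf{E}\bigl(\sum_i c_i b_i\bigr) = \sum_i c_i \mathsf{E}(b_i)$, and properties (i)--(iv) of an extension operator follow at once (finiteness of the sum in the Hamel expansion guarantees pointwise well-definedness). For $b \in D$, the value $\mathsf{E}(b)$ is forced to be the prescribed surreal function. For $b \in B_0$, however, $\mathsf{E}(b)$ may be \emph{any} function in $T[\No]$ whose restriction to $\RR$ equals $b$; in particular, its value at any infinite surreal is totally free.

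To produce $2^{\mathfrak{c}}$ distinct operators, fix an infinite surreal $\omega \in \No$ and choose an indexed subfamily $\{b_\alpha\}_{\alpha < \mathfrak{c}} \subseteq B_0$. For each $\varphi : \mathfrak{c} \to \{0,1\}$, construct $\mathsf{E}_\varphi$ by assigning $\mathsf{E}_\varphi(b_\alpha)$ to be any surreal extension of $b_\alpha$ with $\mathsf{E}_\varphi(b_\alpha)(\omega) = \varphi(\alpha)$, while fixing (uniformly across $\varphi$) the surreal extensions of all other elements of $B_0$ and making the forced assignments on $D$. Distinct $\varphi, \psi$ yield distinct operators: if $\varphi(\alpha) \neq \psi(\alpha)$ then $\mathsf{E}_\varphi(b_\alpha)$ and $\mathsf{E}_\psi(b_\alpha)$ already disagree at $\omega$. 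This yields $2^{\mathfrak{c}}$ pairwise distinct extension operators, all preserving the monomials, the exponential, and the other prescribed functions. The main obstacle is essentially bookkeeping --- verifying that each constructed $\mathsf{E}_\varphi$ genuinely satisfies all four axioms and that distinctness is preserved after passing from the basis to the whole of $V$ --- rather than any deeper conceptual difficulty, since the unconstrained basis elements are subject to no restriction beyond agreement with their real originals on $\RR$.
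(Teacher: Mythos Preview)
Your argument is correct. You construct extension operators directly via a Hamel basis of $T[\RR,\RR]$, prescribing the required values on the monomials, $\exp$, and other designated functions, and exploiting the freedom on the remaining basis elements to produce $2^{\mathfrak{c}}$ distinct operators.

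The paper takes a different, less direct route: it simply cites the preceding theorem on antidifferentiation operators (whose proof also rests on a Hamel-basis/projector argument) and sets $\mathsf{E}(f) := (A(f))'$. In context this is economical, since the antidifferentiation operator has already been built, but as written the paper's one-line proof leaves the $2^{\mathfrak{c}}$ count entirely implicit---the cited theorem asserts only the existence of \emph{one} antidifferentiation operator, and one must go back into its proof to see the freedom that yields the cardinality claim. Your argument makes that count explicit and avoids the detour through antidifferentiation altogether. The underlying mechanism is the same (Hamel basis plus axiom of choice); what you gain is a self-contained and more transparent justification of the $2^{\mathfrak{c}}$.

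One small remark: you should note that the designated set $D$ (monomials, $\exp$, etc.) is indeed linearly independent over $\RR$ before extending it to a basis---this is immediate for the concrete functions named, but worth stating so that the extension step is clean.
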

\begin{proof}
{\rm This follows from Theorem \ref{Ad-hoc1} by letting  $\mathsf{E}(f):=(A(f))'$. 
}\end{proof}

\begin{Note}
{\rm   The unnatural positive results above which do not involve exponentials extend to general non-Archimedean fields.
}\end{Note}

\vspace{0.3cm}
\subsection{Obstructions}\label{S2.2O} $ $\\

The next set of results are negative. We present different types of obstructions to integration with the aim of clarifying the nature of these obstructions. The first group of results (Theorem \ref{TT18} - Theorem \ref{nonintext}) are targeted primarily (though not exclusively) to the proposed Conway-Kruskal-Norton program of surreal integration based on ``genetic" definitions; see footnote \ref{f1}. These results suggest that for their integration program on $\No$ to succeed,  even for rapidly decaying entire functions, stringent conditions governing the behavior of the functions at $\infty$ are needed. The remaining negative results hold for any non-Archimedean ordered field $V$ with specified properties.

Throughout this section, the space $\mathcal{E}$ is defined as in \eqref{eq:defE}, $V$ (which may be a set or a proper class) is a non-Archimedean ordered field extending $\RR$, and $\omega$ is a positive infinite element of $V$. If $V=\No$, then $\omega$ may be taken to be $\{\NN|\}$. 

We start with a standard definition.
\begin{Definition}
  \rm A set of reals is Baire measurable if and only if its symmetric difference with some open set is meager. This is often called the ``property of Baire.'' \end{Definition}

Let $\mathcal{E}_b=\left \{ f\in\mathcal{E}:\|f\|_{\infty}<\infty\ \right \}$, where, as usual, $\|f\|_{\infty}=\sup_{x\in\RR}|f|$. For Theorems \ref{TT18} and \ref{ZFHB} below we could further restrict the space to that of entire functions of exponential order at most two; moreover, compatibility with all monomials is not required.

We remind the reader that a {\bf Banach limit} is a  linear functional $\varphi$ on the space $\ell^{\infty}=\{s:\NN\to\CC|\sup_n |s_n|<\infty\}$ such that $\varphi(Ts)=\varphi(s)$ where $T(s_0,s_1,...)=(s_1,s_2,...)$ and $\varphi(s)\in [\inf_{n\in\NN}s_n,\sup_{n\in\NN}s_n]$, if ran$(s)\subset\RR$. We also remind the reader that NBG$^-$, which is conservative over ZF, includes no form of choice.

The following definition makes use of the conceptions introduced in Definitions \ref{DefExtf} and \ref{DefAf} from \S \ref{Un+}.
\begin{Definition}
{\rm  $\mathfrak{E}^+_{\omega}:=\{f\in\mathfrak{E}_{\omega}:(h\in \mathcal{E}_b,h\ge 0)\Rightarrow f(h)\ge 0\} $. $\textgoth{A}^+_{\omega}$ is similarly defined.}
\end{Definition}
\begin{Theorem}\label{TT18}
  {\rm NBG$^-$  proves that if $\mathfrak{E}^+_{\omega}\cup \textgoth{A}^+_{\omega} \ne \emptyset$, then Banach limits exist.}
\end{Theorem}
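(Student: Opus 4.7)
The plan is to manufacture, from any $\Phi\in\mathfrak{E}^+_\omega\cup\textgoth{A}^+_\omega$, an explicit Banach limit on $\ell^\infty$ inside NBG$^-$, so that all of the choice is already packed into the hypothesis that the operator exists. The strategy has three ingredients: a shift-equivariant embedding of $\ell^\infty$ into $\mathcal{E}_b$; a positive linear functional $\Lambda:\mathcal{E}_b\to\RR$ derived from $\Phi$; and a Ces\`aro regularisation that turns the failure of shift invariance for $\Lambda$ into a quantity that is infinitesimal at $\omega$.

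For the embedding, put $h_s(x):=\sum_{n\in\NN}s_n\,\sigma(x-n)$ with $\sigma(x):=\bigl(\sin\pi x/(\pi x)\bigr)^{2}$, the sinc-squared partition of unity. Poisson summation gives $\sum_{n\in\ZZ}\sigma(x-n)\equiv 1$, so $h_s$ is entire, lies in $\mathcal{E}_b$ with $\|h_s\|_\infty\leq\|s\|_\infty$, is non-negative when $s\geq 0$, carries the constant sequence $\mathbf{1}$ to the constant function $1$, and satisfies the crucial shift identity $h_{Ts}(x)=h_s(x+1)$. Next, define $\Lambda(h):=\operatorname{st}\,\mathsf{E}(h)(\omega)$ in the extension case, and $\Lambda(h):=\operatorname{st}\bigl(A(h)(\omega)/\omega\bigr)$ in the antidifferentiation case. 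Linearity is immediate; the monomial axiom of Definition \ref{Dd2} (giving $A(1)(x)=x$) together with $\mathsf{E}(1)=1$ forces $\Lambda(1)=1$; and applying positivity at $\omega$ to the pointwise inequalities $\|h\|_\infty\pm h\geq 0$ gives $|\mathsf{E}(h)(\omega)|\leq\|h\|_\infty$ and $|A(h)(\omega)|\leq\omega\|h\|_\infty$, so that $\Lambda$ is well-defined and satisfies $\inf_\RR h\leq\Lambda(h)\leq\sup_\RR h$. Finally, set $M_h(x):=x^{-1}\int_0^x h(t)\,dt$ (extended entire by $M_h(0):=h(0)$, and again in $\mathcal{E}_b$ between $\inf_\RR h$ and $\sup_\RR h$), and define the candidate Banach limit $\mu(s):=\Lambda(M_{h_s})$. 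Linearity of $\mu$, the normalisation $\mu(\mathbf{1})=1$, and the bracketing $\inf_n s_n\leq\mu(s)\leq\sup_n s_n$ follow directly from these properties.

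The main obstacle, and the hard step, is shift invariance. The computation $M_{h_{Ts}}(x)-M_{h_s}(x)=B_s(x)/x$ with $B_s(x):=H_s(x+1)-H_s(x)-H_s(1)$ and $H_s:=\int_0^\cdot h_s$ produces an entire function $B_s$ that vanishes at $0$ and satisfies $|B_s|\leq 2\|s\|_\infty$, so that shift invariance reduces to $\Lambda(B_s/x)=0$. The plan is to prove a comparison lemma: if $f\in\mathcal{E}_b$ is dominated pointwise on $\RR$ by a family $(\kappa_N)\subset\mathcal{E}_b$ of non-negative entire functions with $\Lambda(\kappa_N)\to 0$, then $\Lambda(f)=0$. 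One applies this with $f=B_s/x$ and the family $\kappa_N(x):=2\|s\|_\infty\,\sigma(x/N)+2\|s\|_\infty/N$, pinning down $\Lambda(\kappa_N)$ through the monomial axiom combined with positivity applied to $0\leq\sigma(x/N)\leq 1$. In the strong-antidifferentiation case of $\textgoth{A}^+_\omega$ an alternative and more direct route is available: apply Definition \ref{DefAf} to $F(x):=A(h_s)(x+1)$, whose derivative on $\RR$ equals $h_{Ts}$, obtaining $A(h_{Ts})=A(h_s)(\cdot+1)+C$ for some $C\in\No$; evaluating at $\omega$ and invoking $A(\|h_s\|_\infty\pm h_s)(\omega)\geq 0$ pins down both $C$ and $A(h_s)(\omega+1)-A(h_s)(\omega)$ as finite surreals, which is what is needed. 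The hardest technical point throughout is that the axioms for $\mathsf{E}$ and $A$ furnish neither multiplicativity nor translation equivariance; the Ces\`aro device and the comparison lemma are designed precisely to bypass these missing structural features, reducing shift invariance to pointwise inequalities between real-valued functions that are then transported to $\No$ through positivity at $\omega$.
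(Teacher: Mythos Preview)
Your proposal contains a genuine gap at exactly the point you flag as hardest: the comparison lemma for shift invariance.  You want to conclude $\Lambda(\kappa_N)\to 0$ for $\kappa_N(x)=c\,\sigma(x/N)+c/N$, and you say this is ``pinned down through the monomial axiom combined with positivity applied to $0\le\sigma(x/N)\le 1$.''  But those two ingredients give you only $0\le\mathsf{E}(\sigma(\cdot/N))\le 1$; they say nothing about the limit as $N\to\infty$.  The functional $\mathsf{E}$ is merely linear, positive on $\mathcal{E}_b$, and sends $x^n\mapsto\omega^n$; it is not multiplicative, not translation-equivariant, and not continuous in any useful sense, so there is no mechanism to force $\mathsf{E}(\sigma(\cdot/N))$ to be small.  (Indeed, $\sigma(\cdot/N)\to 1$ pointwise on $\RR$, so any heuristic based on pointwise limits actually pushes the wrong way.)  The same objection blocks the ``alternative route'' you sketch for the antidifferentiation case: you invoke Definition~\ref{DefAf} (strong antidifferentiation operators on $T[\No]$), but $\textgoth{A}^+_\omega$ is built from the \emph{functional} Definition~\ref{DefAf} ($a_\omega:K\to\No$), which carries no translation-by-$1$ information at all.

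There are also two smaller slips that would need repair even if the main step went through.  First, $h_{\mathbf 1}(x)=\sum_{n\ge 0}\sigma(x-n)$ is \emph{not} the constant $1$, since you sum over $\NN$ rather than $\ZZ$; it equals $1-\sum_{n<0}\sigma(x-n)$.  Second, the shift identity is $h_{Ts}(x)=h_s(x+1)-s_0\,\sigma(x+1)$, not $h_s(x+1)$.  Both errors introduce extra $O(1/x)$ terms which you would again have to kill via the broken comparison lemma.

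The paper avoids this trap by never working with bounded Ces\`aro means.  Instead it applies the functional to the (unbounded, linearly growing) antiderivative $F_{s;\epsilon}=\int_0^x f_{s;\epsilon}$ of a mollification of the piecewise-linear interpolant of $s$, then divides by $\omega$ and takes the real part.  The point is that all the error terms arising in the shift-invariance check are then bounded pointwise by \emph{affine} functions $a+b\,x$, whose image under $\mathsf{E}$ is computable from the monomial axiom as $a+b\,\omega$; dividing by $\omega$ and letting the mollification parameter $\epsilon\to 0$ annihilates the error.  Your construction cannot imitate this, because once you pass to the bounded function $M_{h_s}$ the only comparisons available are with bounded entire functions, for which the monomial axiom is useless.
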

\begin{proof}
The proof for $\textgoth{A}^+_{\omega}$ is a corollary of the following one about $\mathfrak{E}^+_{\omega}$. It is enough to define a Banach limit $\varphi$ on real sequences. Let  $l_s$ be $0$ for $x\le0$ and the linear interpolation between $0$ and $s_0$ on $[0,1]$ and between $s_n$ and $s_{n+1}$ on $[n+1,n+2]$ $\forall n\in\NN$. Now, $\forall s\in\ell^{\infty} $ and  $\forall \epsilon>0$,  $\exists  f_{s;\epsilon}\in \mathcal{E}_b$ (obtained by mollifying $l_s$) such that $\|f_{s;\epsilon}-l_s\|_{\infty}<\epsilon$.  Let $F_{s;\epsilon}(x):=\int_0^x f_{s;\epsilon}$ and $L_s(x):=\int_0^x l_s$. Note that $\forall x\in\RR^+$ we have (*): $|F_{s;\epsilon}(x)-L_s(x)|\le \epsilon x$. 
Let $\mathsf{E}\in\mathfrak{E}_{\omega}^+$. Define $\varphi$ on $\ell^\infty$ by  $\varphi(s):=\lim_{\epsilon\to 0}\, \re\left(\,\omega^{-1}\mathsf{E}(F_{s;\epsilon})(\omega)\right)$, where re$(x)$ denotes the real part of a surreal number $x$ written in normal form (see  \S\ref{S2}); this exists by (*). By mimicking Robinson's non-standard analysis construction (cf. \cite[page 54]{Loeb}), it is easy to check that  $\varphi$ is a Banach limit. (The calculations are straightforward; see  \S\ref{Sn4.1} for the details.)\end{proof}

\begin{Theorem}\label{ZFHB}
  {\rm 
    \begin{enumerate}
    \item   NBG$^-$ proves that the existence of Banach limits implies the existence  of a subset of $\RR$ not having the property of Baire.
\item  NBG$^-$+ DC does not prove the existence  of a subset of $\RR$ not having the property of Baire.
\item  $\mathfrak{E}^+_{\omega}\cup \textgoth{A}^+_{\omega} \ne \emptyset$ is independent of NBG$^-$+ DC.
    \end{enumerate}
}
\end{Theorem}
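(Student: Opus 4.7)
The proof splits into three parts, with (3) following from (1), (2), and Theorem \ref{TT18}, so I would address them in that order. Throughout, the aim is to stay inside NBG$^-$ or NBG$^-$ $+$ DC as stated.

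For (1), I would follow the classical route from an invariant mean to a non-BP set. Given a Banach limit $\varphi$ on $\ell^\infty(\NN)$, the first step is to produce a shift-invariant, finitely additive probability measure $m$ on $\mathcal{P}(\ZZ)$ via a standard symmetrization of $\varphi$ under the involution $n \mapsto -n$. Next, using the free action of $\ZZ$ on $\mathbb{T} = \RR/\ZZ$ by an irrational rotation $R_\alpha$, I would push $m$ forward to obtain a rotation-invariant, finitely additive probability measure $\nu$ on $\mathcal{P}(\mathbb{T})$ agreeing with Lebesgue measure on Borel sets, e.g.\ via $\nu(E) = \int_{\mathbb{T}} m(\{n \in \ZZ : R_\alpha^n t \in E\}) \, dt$ (the integrand is bounded and Borel in $t$). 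The final step is to invoke (or re-derive) the classical result in choiceless descriptive set theory---in the spirit of Pincus, Pawlikowski, and Foreman--Wehrung---that the existence of such a rotation-invariant, finitely additive extension of Lebesgue measure is incompatible with ``every subset of $\mathbb{T}$ has the property of Baire''. Concretely, one exhibits, from $\varphi$, a shift-invariant $X \subseteq \mathbb{T}$ whose BP would combine with the $0$--$1$ law for shift-invariant BP sets and the Haar-uniqueness of Lebesgue measure to force $\nu$ to equal Lebesgue on all of $\mathcal{P}(\mathbb{T})$; yet $\nu(X)$ can be computed directly from the Banach-limit-derived $m$ to lie strictly between the Borel values $0$ and $1$ admitted by such a collapse. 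The main anticipated obstacle is this last step: the non-BP set must be fully explicit from $\varphi$, since we are forbidden any additional use of choice.

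For (2), I would appeal to Shelah's theorem that ZF $+$ DC is consistent with ``every subset of $\RR$ has the property of Baire'', emphasizing that, unlike Solovay's analogous consistency result for Lebesgue measurability, Shelah's construction does \emph{not} require the consistency of an inaccessible cardinal. Since NBG$^-$ $+$ DC is a conservative class-theoretic extension of ZF $+$ DC on set-theoretic statements, Shelah's model is also a model of NBG$^-$ $+$ DC in which no non-BP subset of $\RR$ exists, which gives (2).

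For (3), the unprovability direction is immediate: were NBG$^-$ $+$ DC to prove $\mathfrak{E}^+_\omega \cup \textgoth{A}^+_\omega \neq \emptyset$, then by Theorem \ref{TT18} it would prove the existence of a Banach limit, hence by (1) it would prove the existence of a non-BP subset of $\RR$, contradicting (2). For irrefutability, I would work inside full NBG (with global choice) and refine the Hamel-basis construction of Theorem \ref{Ad-hoc1}: starting from a Banach limit $\varphi$, assign provisional values $\mathsf{E}(h)(\omega) := \varphi((h(n))_{n \in \NN})$ on $\mathcal{E}_b$, then extend transfinitely along a well-ordering of $T[\RR,\RR]$ to a full (strong) antidifferentiation operator, choosing at each step a surreal value for the next basis image that preserves the positivity constraint $\mathsf{E}(h)(\omega) \ge 0$ for $h \in \mathcal{E}_b$ with $h \ge 0$. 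Since at each step this constraint pins down only a closed half-line of admissible surreal values---non-empty because positivity of $\varphi$ guarantees consistency of the half-space conditions already imposed---the construction goes through, producing an element of $\textgoth{A}^+_\omega$. Hence the statement holds in NBG $\supseteq$ NBG$^-$ $+$ DC and is therefore consistent with NBG$^-$ $+$ DC, completing the independence.
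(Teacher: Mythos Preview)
For (2) your argument matches the paper's exactly: cite Solovay--Shelah for the consistency of ZFDC $+$ ``every set of reals has the property of Baire'', then invoke the conservativity of NBG$^-$ $+$ DC over ZFDC.

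For (1) the paper does not construct anything; it simply cites the literature (Schechter's \emph{Handbook}, pp.~611--612, and Kharazishvili) for the ZF implication. Your proposed route has a genuine gap: in $\nu(E) = \int m(\{n \in \ZZ : R_\alpha^n t \in E\})\,dt$ the integrand is \emph{not} in general Borel (or even measurable) in $t$ for arbitrary $E$, since $m$ is merely finitely additive and $E$ need not be Borel; your parenthetical ``the integrand is bounded and Borel in $t$'' is unjustified and is exactly the point where choice would otherwise sneak in. The standard choiceless arguments avoid any integration and work directly in Cantor space, showing that a set such as $\{x \in \{0,1\}^{\NN} : \varphi(x) > 1/2\}$ cannot have the Baire property, via shift-invariance of $\varphi$ and a Baire-category $0$--$1$ law.

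For (3), the paper's own proof is briefer than yours: it invokes only (2) and Theorem~\ref{TT18}, which strictly speaking yields just the \emph{unprovability} half of independence. Your treatment of irrefutability is therefore more thorough in intent, but the construction you sketch needs repair. The Banach-limit seed $\mathsf{E}(h)(\omega) := \varphi((h(n))_n)$ produces \emph{real} values on $\mathcal{E}_b$, while on monomials $x^n$ (which lie outside $\mathcal{E}_b$ for $n \ge 1$) the extension must take the genuinely surreal values $\omega^n$; it is not automatic that these two partial assignments admit a common \emph{positive} linear extension, and your ``closed half-line'' picture does not by itself show the successive constraints remain consistent. The clean fix in NBG is a Riesz-type positive-cone extension: one checks compatibility by noting that any polynomial with eventually nonnegative values has positive leading coefficient, hence positive assigned value at $\omega$, and that this is consistent with the Banach-limit values on $\mathcal{E}_b$. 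That is a different (and simpler) argument than the transfinite refinement of Theorem~\ref{Ad-hoc1} you outline.
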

\begin{proof}
(1) ZF and, thus, NBG$^-$ proves that the existence of Banach limits implies the existence  of a subset of $\RR$ not having the property of Baire; see \cite[pages 611-612]{Hand} and \cite{Kharazishvili}. 

(2) ZFDC does not prove the existence  of a subset of $\RR$ not having the property of Baire; see \cite{Solovay} and  \cite{Shelah}. Since NBG$^-$+ DC is a
conservative extension of ZFDC, (2) follows.

(3) This follows from (2) and Theorem \ref{TT18}.
\end{proof}
The above results indicate the nature of the obstructions to the existence of a general ``genetic'' integral (see footnote \ref{f1}): 
it is implicit in the (informal) description of a such an integral that it should be explicitly constructed in NBG$^-$ and, hence, \emph{without any form of choice.}  However, such an integral exists at most on those functions whose behavior {\em at} $\infty$ ensures that an {\em explicitly constructed} Banach limit  exists, as in Theorem \ref{TT18}. Even this condition is insufficient, as we shall now see. The class of functions for which genetic definitions exist is in fact highly restricted. See \S\ref{Sconstr}.

The next sequence of results shows that the obstructions persist even if we gradually impose more constraints on the functions, until the constraints are so restrictive that the remaining functions are simply polynomials.

In the following definition, $\mathcal{E}_1$ is a space of real functions,  {\em exponentially decaying on $\RR^+$},  that extend to  entire functions of exponential order one.  In classical analysis, growth conditions  (certainly exponential decay) at a singular point of an otherwise smooth function would ensure integrability  at that point. This however  is not the case in the surreal world.

\begin{Definition}
{\rm   Let
$\displaystyle  \mathcal{E}_1:=\left\{f \in \mathcal{E}: \|f\|:=\sup_{z\in\CC}|e^{-2|z|}f(z)|+\sup_{x\in\RR^+}|e^{x/2}f(x)|<\infty\right\}$. $\forall n\in\ZZ^+$, let $f_n(z)=  n e^{-z}(1-e^{-z/n})$  and  $\mathcal{E}_e\subset \mathcal{E}$  be the  closed subspace   generated by $\{f_n\}_{n\in\ZZ^+}$.
}\end{Definition}
\begin{Lemma}\label{LBanach}{\rm 
  $\mathcal{E}_1$  is a Banach space, and $\forall n\in\ZZ^+$, $\|f_n\|<2$. In addition, $\mathcal{E}_e$ is a  separable Banach space\footnote{Note that $\mathcal{E}_1$ is {\em not} separable. Indeed, for any $u_1\ne u_2\in S^1$, $\|f_{u_1}-f_{u_2}\|=1$, where $f_{u_i}=e^{\textstyle u_i z}$; the same holds for $f_{u_i}+\overline{f_{u_i}}\in \mathcal{E}_1$ if $\arg u_i\in I=(\frac34\pi,\frac54\pi)$, and $I$ is of course uncountable.} and thus a Polish space.
}\end{Lemma}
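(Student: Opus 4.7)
The plan is to verify three things in sequence: that $\mathcal{E}_1$ is a normed space which is complete, that each $f_n$ satisfies $\|f_n\| < 2$, and that the closed span $\mathcal{E}_e$ inherits completeness and admits a countable dense subset. First, $\|\cdot\|$ is immediately a seminorm (each summand is), and positivity follows from the first summand alone, since an entire function vanishing on $\CC$ is zero. For completeness, take a $\|\cdot\|$-Cauchy sequence $(g_k) \subset \mathcal{E}_1$. Since $e^{-2|z|} \ge e^{-2R}$ on each closed disk $|z| \le R$, the sequence is uniformly Cauchy on compact subsets of $\CC$, so by the Weierstrass theorem on holomorphic limits it converges uniformly on compacts to an entire function $g$. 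The restriction $g|_{\RR}$ is real-valued as a uniform limit of real-valued continuous functions. Letting $l\to\infty$ in $\sup_z|e^{-2|z|}(g_k(z) - g_l(z))| < \epsilon$, and similarly in the $\sup_{\RR^+}$ seminorm, shows $g \in \mathcal{E}_1$ and $g_k \to g$ in $\|\cdot\|$.

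Second, to bound $\|f_n\|$ I would estimate the two summands of the norm separately. For the real-axis part, the elementary inequality $1 - e^{-t} \le t$ for $t \ge 0$, applied with $t = x/n$, gives
\[
|e^{x/2} f_n(x)| \;=\; n\, e^{-x/2}(1 - e^{-x/n}) \;\le\; x\, e^{-x/2},
\]
whose supremum over $x > 0$ is $2/e$. For the complex part, writing
\[
f_n(z) \;=\; n\bigl(e^{-z} - e^{-(1 + 1/n)z}\bigr) \;=\; n\sum_{k \ge 1}\frac{(-1)^{k+1}\bigl((1+1/n)^k - 1\bigr)}{k!}\, z^k
\]
and bounding termwise yields $|f_n(z)| \le n\bigl(e^{(1+1/n)|z|} - e^{|z|}\bigr)$, hence
\[
|e^{-2|z|} f_n(z)| \;\le\; n\bigl(e^{-(1 - 1/n)|z|} - e^{-|z|}\bigr).
\]
For $n = 1$ the right-hand side is $1 - e^{-|z|} \le 1$; for $n \ge 2$ a single-variable calculus check locates the maximum at $|z| = n\ln\frac{n}{n-1}$ with value $\bigl(\frac{n-1}{n}\bigr)^{n-1} < 1$. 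Adding the two summands gives $\|f_n\| \le 1 + 2/e < 2$ for every $n \in \ZZ^+$.

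Third, $\mathcal{E}_e$ is by construction closed in the Banach space $\mathcal{E}_1$, hence itself a Banach space. For separability, let $\mathcal{D}$ be the set of finite $\QQ$-linear combinations of $\{f_n\}_{n \in \ZZ^+}$; it is countable. Any element of $\mathcal{E}_e$ is a $\|\cdot\|$-limit of finite real-linear combinations $\sum_i \lambda_i f_{n_i}$, and each $\lambda_i$ may be replaced by a rational $q_i$ at cost at most $|\lambda_i - q_i|\,\|f_{n_i}\| < 2|\lambda_i - q_i|$; so $\mathcal{D}$ is dense in $\mathcal{E}_e$. A separable Banach space is a complete separable metric space, i.e.\ a Polish space.

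The main obstacle is the uniform bound $\|f_n\| < 2$: a naive estimate using $|1 - e^{-z/n}| \le |z/n|\,e^{|z|/n}$ combined with $-\Re z \le |z|$ produces a contraction factor $e^{-|z|(1 - 1/n)}$ that degenerates exactly at $n = 1$, precisely where $f_n$ is largest. The Taylor-series bound above sidesteps this by exploiting the term-by-term cancellation between the expansions of $e^{-z}$ and $e^{-(1+1/n)z}$, producing a bound by $1$ on the complex supremum that is valid uniformly for all $n \ge 1$.
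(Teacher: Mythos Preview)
Your proof is correct and follows the same outline as the paper: completeness via uniform convergence on compact disks (the paper phrases this as ``$\|\cdot\|$ is manifestly equivalent to the sup norm on $\overline{\mathbb{D}_K}$''), while the bound $\|f_n\|<2$ and the separability of $\mathcal{E}_e$ are dismissed there as ``the rest is calculus.'' You have simply supplied that calculus in detail; your Taylor-series estimate for the complex part and the $1-e^{-t}\le t$ bound for the real part together give $\|f_n\|\le 1+2/e<2$, which is a clean way to make the paper's implicit claim explicit.
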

\begin{proof}
 For $K>0$, let $\DD_K=\{z:|z|<K\}$.    On any $\overline{\DD_K}$,  $\|\cdot\|$ is manifestly equivalent  to the sup norm. A uniform limit on $\overline{\DD_K}$   of functions in $\mathcal{E}$  is analytic in $\DD_K$ and real valued on $[0,K)$, thus $\mathcal{E}$ is a Banach space. The rest  is calculus.
\end{proof}
The functionals  in $\mathfrak{E}$ ($\textgoth{J}$, respectively) below have properties expected of any extension to $\omega$ (integration on $[0,\omega]$, respectively).
\begin{Definition} {\rm 
  $\mathsf{E}\in\mathfrak{E}$ if $\mathsf{E}$ is linear on $\mathcal{E}_e$, and  $\forall a\in \RR$, $\mathsf{E}(e^{ax}) =e^{a\omega}$ when $e^{ax}\in \mathcal{E}_e$.  $\mathsf{J}\in\textgoth{J}$ if $\mathsf{J}$ is linear on $\mathcal{E}_e$, and  $\forall a\in \RR$, $\mathsf{J}(e^{ax}) =a^{-1}(e^{a\omega}-1)$  when  $e^{ax}\in \mathcal{E}_e$.
}\end{Definition}
\begin{Lemma}\label{Ldiscont}
{\rm  If $\mathfrak{E}\cup \textgoth{J} \ne \emptyset$, then there is a \emph{discontinuous} homomorphism  $\varphi:\mathcal{E}_e\to\RR$}. 
\end{Lemma}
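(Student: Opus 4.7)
The plan is to post-process the given surreal functional by multiplying by $e^{\omega}$ and taking the real part, producing a real-valued linear map on $\mathcal{E}_e$ whose values on the generators $f_n$ grow without bound. Given $\mathsf{E}\in\mathfrak{E}$ (the $\mathsf{J}\in\textgoth{J}$ case is treated identically), I would define
\[
\varphi(f):=\re\bigl(e^{\omega}\,\mathsf{E}(f)\bigr),
\]
where $\re$ is the $\omega^{0}$ coefficient of the Conway--Ressayre normal form, as in the proof of Theorem~\ref{TT18}. That $\varphi$ is an $\RR$-linear map $\mathcal{E}_e\to\RR$ follows from $\RR$-linearity of $\mathsf{E}$ (hypothesis), of multiplication by $e^{\omega}\in\No$ (a field axiom), and of extraction of the $\omega^{0}$ coefficient (because sums and real rescalings in $\No$ combine like powers of $\omega$, and $\omega^0$ appears with at most one coefficient in a normal form). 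Since every surreal admits a normal form, $\varphi$ is defined on all of $\mathcal{E}_e$, not only on the $\RR$-span of the $f_n$.

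The decisive calculation is the evaluation of $\varphi$ on $f_n(z)=n e^{-z}-n e^{-z(1+1/n)}$. By the defining property of $\mathfrak{E}$ together with the identity $e^{\omega}e^{a\omega}=e^{(a+1)\omega}$ in $\No$,
\[
e^{\omega}\mathsf{E}(f_n)=n\bigl(1-e^{-\omega/n}\bigr).
\]
Since $\omega/n$ is positive infinite, $e^{-\omega/n}$ is a positive infinitesimal whose normal form carries no $\omega^{0}$ term, so $\varphi(f_n)=n$. The same computation for $\mathsf{J}$, starting from $\mathsf{J}(e^{ax})=a^{-1}(e^{a\omega}-1)$, gives
\[
e^{\omega}\mathsf{J}(f_n)=-n+\tfrac{n}{n+1}e^{\omega}+\tfrac{n^{2}}{n+1}e^{-\omega/n},
\]
in which only the $-n$ summand contributes an $\omega^{0}$ coefficient, so $\varphi(f_n)=-n$. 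In both cases $|\varphi(f_n)|=n\to\infty$ while $\|f_n\|<2$ by Lemma~\ref{LBanach}, which shows that $\varphi$ is unbounded on a norm-bounded set and therefore discontinuous.

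The only real obstacle is conceptual rather than technical: one must be comfortable that a ``real part'' map on $\No$ is well defined and $\RR$-linear via projection to the $\omega^{0}$ coefficient in the Conway--Ressayre normal form. With that in hand, every step above is routine algebra with surreal exponentials, and no form of the axiom of choice is invoked beyond what is already built into $\mathsf{E}$ or $\mathsf{J}$; the argument lives entirely in NBG$^-$, as the intended application in Theorem~\ref{ZFHB} demands.
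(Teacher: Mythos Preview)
Your proof is correct and follows exactly the paper's approach: define $\varphi(f)=\re(e^{\omega}\mathsf{E}(f))$ (respectively $\re(e^{\omega}\mathsf{J}(f))$), observe it is $\RR$-linear, and exhibit discontinuity via the generators $f_n$. Your computation $\varphi(f_n)=\pm n$ with $\|f_n\|<2$ is just the rescaled form of the paper's $\varphi(n^{-1}f_n)=1$ with $\|n^{-1}f_n\|\to 0$; you have simply spelled out the surreal arithmetic (and the $\mathsf{J}$ case) in more detail than the paper does.
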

\begin{proof}
Assume $\mathsf{E}\in \mathfrak{E}$. Define  $\varphi:\mathcal{E}_e\to\RR$  by $\varphi(f)=\re(e^{\omega} \mathsf{E}(f))$, where re$(e^{\omega} \mathsf{E}(f))$ is the real part of the normal form of the surreal number $e^{\omega} \mathsf{E}(f)$; see \S \ref{S2}. Clearly, $\varphi$ is linear. Moreover, since $\lim_{n\to\infty}\|n^{-1}f_n\|= 0 $ and  $\varphi(n^{-1}f_n)=1$,  $\varphi$ is discontinuous. Similarly, assuming that  $\mathsf{J}\in \textgoth{J}$, $\varphi(f)=\re (e^{\omega} \mathsf{J}(f))$ is discontinuous.
\end{proof}
 In virtue of Pettis's Theorem \cite{Pettis}, Solovay's results \cite[\S 4, p. 55]{Solovay}, Thomas-Zapletal's Theorem 1.4 \cite{Thomas} and the equiconsistency of ZFDC and  NBG$^-$+ DC, the following holds.
\begin{Theorem}[\cite{Thomas}]\label{Thomas}{\rm The following is consistent with NBG$^-$ + DC. For
all Polish groups $G,H$, every homomorphism $\varphi:G\to H$ is continuous.
}
\end{Theorem}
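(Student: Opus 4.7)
The plan is to combine the four results cited just before the theorem statement: Pettis's automatic continuity theorem, Solovay's model in which every set of reals has the property of Baire, the Thomas--Zapletal strengthening to all Polish groups, and the relative consistency of NBG$^-$ + DC with ZFDC. Since the theorem is a consistency statement, it suffices to produce one model of NBG$^-$ + DC in which the conclusion holds; the natural candidate is (a class version of) Solovay's model.

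First I would recall Solovay's construction: starting from a ground model of ZFC with a strongly inaccessible cardinal $\kappa$, collapse $\kappa$ to $\omega_1$ via the Levy collapse and pass to the submodel $M$ of sets hereditarily definable from a real. Solovay showed that $M\models\text{ZF}+\text{DC}+$ ``every set of reals has the property of Baire.'' In such a model, since every Polish space is a continuous image of $\mathbb{R}^\omega$ and the property of Baire is preserved by Borel-measurable operations, every subset of a Polish space has the property of Baire; in particular, for any Polish groups $G,H$ and any homomorphism $\varphi:G\to H$, the graph of $\varphi$ (and the preimage of every open set in $H$) has the property of Baire, so $\varphi$ is Baire measurable.

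Next I would invoke the automatic continuity machinery. The classical Pettis theorem states that a Baire-measurable homomorphism from a Polish group into a separable metrizable group is continuous; Thomas and Zapletal's Theorem 1.4 handles the remaining cases (in particular, targets that are Polish but not second countable in a restrictive sense), so that in $M$ \emph{every} homomorphism between Polish groups is continuous. Combining the previous paragraph with this input, the desired conclusion ``every homomorphism between Polish groups is continuous'' holds in $M$, which is a model of ZF + DC.

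Finally I would lift the result from ZFDC to NBG$^-$ + DC. Because NBG$^-$ + DC is a conservative extension of ZFDC, given the model $M$ of ZFDC produced above we obtain a model of NBG$^-$ + DC with the same sets (the classes being interpreted as the definable subclasses of $M$); in this enlarged structure the statement about Polish groups is unchanged, since it quantifies only over sets. Hence the conclusion is consistent with NBG$^-$ + DC. The only non-routine step is quoting Thomas--Zapletal correctly, because Pettis alone requires second countability of the target and the statement as written allows arbitrary Polish groups; that is precisely why the citation to \cite{Thomas} rather than to Pettis alone is needed.
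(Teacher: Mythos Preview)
Your proposal is essentially the same as the paper's: the paper does not give a detailed argument but simply attributes the result to the combination of Pettis's theorem, Solovay's model, Thomas--Zapletal's Theorem~1.4, and the conservativity of NBG$^-$ + DC over ZFDC, which is exactly the route you outline.

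One small slip: your closing remark that Pettis ``requires second countability of the target'' and that Thomas--Zapletal is needed to cover Polish targets that are ``not second countable'' is off, since Polish spaces are separable metrizable and hence automatically second countable. Pettis's theorem (Baire-measurable homomorphisms from a Baire group into a separable group are continuous) already applies directly once you know, via Solovay, that every subset of a Polish space has the Baire property. The citation to Thomas--Zapletal is for a packaged statement of the automatic-continuity consequence in this setting, not for a genuine extension beyond the Polish case.
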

{\begin{Corollary}\label{ZFDC1}
{\rm  $\mathfrak{E}\cup \textgoth{J}=   \emptyset$ is consistent with NBG$^-$+ DC. 
}\end{Corollary}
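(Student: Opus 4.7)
The plan is to derive this immediately from Lemma \ref{Ldiscont} and Theorem \ref{Thomas} by contrapositive. The content of Lemma \ref{Ldiscont} is an implication: $\mathfrak{E}\cup\textgoth{J}\ne\emptyset$ produces a discontinuous additive homomorphism $\varphi:\mathcal{E}_e\to\RR$. The content of Theorem \ref{Thomas} is that there is a model of NBG$^-$+ DC in which \emph{every} homomorphism between Polish groups is continuous. Combining the two in that model forces the hypothesis of Lemma \ref{Ldiscont} to fail.

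More precisely, I would proceed as follows. Fix a model $\mathcal{M}$ of NBG$^-$+ DC satisfying the ``automatic continuity'' conclusion of Theorem \ref{Thomas}; such a model exists by the equiconsistency of ZFDC with NBG$^-$+ DC together with the results of Solovay, Pettis and Thomas--Zapletal cited before Theorem \ref{Thomas}. Next, check that inside $\mathcal{M}$ the pair $(\mathcal{E}_e,+)$ and $(\RR,+)$ qualify as Polish groups: by Lemma \ref{LBanach}, $\mathcal{E}_e$ is a separable Banach space, hence its additive group is a completely metrizable separable topological group, and $\RR$ is Polish in the standard way. Therefore, in $\mathcal{M}$, every additive homomorphism $\varphi:\mathcal{E}_e\to\RR$ is automatically continuous.

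Now apply the contrapositive of Lemma \ref{Ldiscont}. If $\mathsf{E}\in\mathfrak{E}$ existed in $\mathcal{M}$, then $\varphi(f):=\re(e^{\omega}\mathsf{E}(f))$ would be an additive homomorphism $\mathcal{E}_e\to\RR$ with $\varphi(n^{-1}f_n)=1$ while $\|n^{-1}f_n\|\to 0$, i.e.\ a discontinuous homomorphism between Polish groups, contradicting automatic continuity. The analogous argument, using $\varphi(f):=\re(e^{\omega}\mathsf{J}(f))$, rules out $\textgoth{J}\ne\emptyset$. Hence in $\mathcal{M}$ we have $\mathfrak{E}\cup\textgoth{J}=\emptyset$, which is the required consistency statement.

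The only non-routine point is the appeal to Theorem \ref{Thomas}, which is already cited as an external input; the rest is a direct chaining of previously established results. The one sanity check worth performing is that the normal-form real-part map $\re(\cdot)$ is $\RR$-linear on its domain (so that $\varphi$ really is a homomorphism); this is used in the proof of Lemma \ref{Ldiscont} and is available from the discussion of normal forms in \S\ref{S2}, so no additional work is needed here.
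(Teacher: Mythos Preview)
Your proof is correct and follows exactly the paper's approach: the authors state that the corollary is an immediate consequence of Lemmas~\ref{LBanach} and~\ref{Ldiscont} together with Theorem~\ref{Thomas}, and your write-up simply unpacks this chain of implications.
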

\begin{proof}
  This is an immediate consequence of Lemmas  \ref{LBanach} and \ref{Ldiscont}, and Theorem \ref{Thomas}.
\end{proof}
\begin{Theorem}
{\rm  $\mathfrak{E}\cup \textgoth{J}\ne   \emptyset$ is {\em independent of} NBG$^-$+ DC. 
}\end{Theorem}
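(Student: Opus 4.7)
The plan combines Corollary \ref{ZFDC1} with a Hamel-basis construction inside full NBG. Corollary \ref{ZFDC1} already gives the consistency of $\mathfrak{E}\cup\textgoth{J}=\emptyset$ with NBG$^-$+DC, so only the complementary half is needed: the consistency of $\mathfrak{E}\cup\textgoth{J}\ne\emptyset$ with NBG$^-$+DC. The cleanest route is to prove $\mathfrak{E}\ne\emptyset$ outright inside NBG, since every model of NBG is automatically a model of NBG$^-$+DC, and Con(NBG) and Con(NBG$^-$+DC) are equiconsistent (both being equivalent to Con(ZF), by Shoenfield absoluteness and the G\"odel constructible-universe argument).

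To construct an $\mathsf{E}\in\mathfrak{E}$ in NBG, set $X=\{a\in\RR:e^{ax}\in\mathcal{E}_e\}$. The family $\{e^{ax}\}_{a\in X}$ is $\RR$-linearly independent in $\mathcal{E}_e$, a standard consequence of the distinct decay rates of distinct real exponentials on $\RR^+$. By the Hamel-basis theorem (a consequence of AC, available in NBG), extend this family to an $\RR$-basis $B$ of $\mathcal{E}_e$. Define $\mathsf{E}:\mathcal{E}_e\to\No$ on $B$ by
\[
  \mathsf{E}(e^{ax})=e^{a\omega}\ \text{for}\ a\in X,\qquad \mathsf{E}(b)=0\ \text{for}\ b\in B\setminus\{e^{ax}\}_{a\in X},
\]
and extend by $\RR$-linearity. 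Then $\mathsf{E}$ is linear and sends each $e^{ax}\in\mathcal{E}_e$ to $e^{a\omega}$, so $\mathsf{E}\in\mathfrak{E}$. An entirely parallel construction, substituting $a^{-1}(e^{a\omega}-1)$ for $e^{a\omega}$ on the distinguished basis vectors, yields an element of $\textgoth{J}$. The division by $a$ is harmless because $\mathcal{E}_e\subset\mathcal{E}_1$ forces its members to decay at least like $e^{-x/2}$ on $\RR^+$, so every $a\in X$ satisfies $a\le-\tfrac12$ and in particular $a\ne 0$.

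I do not expect any serious obstacle. The construction is exactly the Hamel-basis-plus-AC device already used in the proof of Theorem \ref{Ad-hoc1}, and the required $\RR$-linear independence of distinct real exponentials is elementary. The one point deserving explicit mention is the logical bookkeeping: an NBG proof of $\mathfrak{E}\ne\emptyset$ supplies, via the equiconsistency of NBG and NBG$^-$+DC, a model of NBG$^-$+DC in which the statement holds; paired with the model of NBG$^-$+DC produced by Corollary \ref{ZFDC1} in which the statement fails, this establishes independence.
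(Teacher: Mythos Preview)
Your proof is correct and takes essentially the same approach as the paper. The paper's one-line proof cites Theorem \ref{Ad-hoc1}, whose proof is precisely the Hamel-basis-plus-AC construction you carry out directly on $\mathcal{E}_e$; so you have simply unpacked the relevant special case rather than invoking the general operator-level result. One small comment: the parenthetical about Shoenfield absoluteness is not the right tool here (equiconsistency of NBG with NBG$^-$+DC runs through conservativity over ZFC/ZFDC and G\"odel's $L$), and in any case you do not need equiconsistency at all---the observation that NBG extends NBG$^-$+DC, together with the standing assumption that NBG is consistent, already yields a model of NBG$^-$+DC satisfying $\mathfrak{E}\ne\emptyset$.
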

\begin{proof}
  This is obtained by combining Corollary \ref{ZFDC1} with Theorem \ref{Ad-hoc1}.
\end{proof}
In virtue of the above, it is clear that if there is a genetic definition of integration on $\No$, its validity cannot be established in NBG$^-$+ DC. 
This being the case, it is intuitively obvious that by supplementing NBG$^-$ with either the axiom of choice, the axiom of global choice or, say, the Continuum Hypothesis cannot help prove that a {\em concrete} integration definition has the intended properties. 

\begin{Note}
 {\rm  It is clear from the proofs that the results above would be unchanged if $e^{-|z|}$ is replaced by some $e^{-|z/n|}, n\in\ZZ^+$, and in fact similar  results carry over to other types of bounds or decay.  For functions of half exponential order, the family $\cosh(\sqrt{z}/n)$ can be used instead of $e^{-z/n}$; for arbitrary types of growth, the argument requires several steps, since special function are to be avoided, and as such, different foundational tools need to be used. In any case, any growth bounds, short of polynomial bounds,  yield the same result.   Polynomial bounds entail that the functions are polynomials, in which case integration exists trivially. Furthermore, these obstructions continue to hold for more general non-Archimedean fields, and we provide the results and detailed proofs for the more general case.}  
\end{Note}

\subsubsection{Further obstructions} 
We now obtain sharper negative results based on
descriptive set-theoretic tools adapted to the questions at hand. By
sharper negative results, we mean negative results that require fewer
properties of the integral, allow for ``nicer'' functions and which, in
some settings, provide both necessary and sufficient conditions for
the existence of extension or antiderivative functionals. Theorem \ref{nonintext}
is a consequence of the core descriptive set-theroetic result, Theorem \ref{T1.1.}, that does not rely on any substantial analytic arguments. Theorem
\ref{2.1T71} is a deeper consequence of Theorem \ref{T1.1.} showing that, in the class of
entire functions bounded in some weighted  $L^\infty$ norm in $\CC$, integrals
exist if and only if the weight allows only for polynomials.

We begin with the definitions of $\theta$-\emph{extensions}  and $\theta$-\emph{antiderivatives} in the setting of  ordered  fields.

\begin{Definition}\label{DefExtf}{\rm 
A   $\theta$-extension from $K\subseteq T[\RR,\RR]$ into ${\bf No}$ is a function $e_{\theta}:K\to {\bf No}$ such that
   \begin{enumerate}[i.]
   \item $e_{\theta}$ is linear;
   \item If $f\in K$ is the monomial $x^n$,  then $e_{\theta}\, (f)=\theta^n$.
   \end{enumerate}
We denote by $\mathfrak{E}_{\theta}$ the collection of $\theta$-extensions.
}\end{Definition}
\begin{Definition}\label{DefAf}{\rm  
A   $\theta$-antidifferentiation from $K\subseteq T[\RR,\RR]$ into ${\bf No}$ is a function $a_{\theta}:K\to {\bf No}$ such that
   \begin{enumerate}[i.]
   \item $a_{\theta}$ is linear;
   \item If $f\in K$ is the monomial $x^{j-1},j>0$,  then $a_{\theta}\, (f)=\theta^{j}/j$.
   \end{enumerate}
We denote by $\textgoth{A}_{\theta}$ the collection of $\theta$-antidifferentiations.
}\end{Definition}
There is no immediate relation between $\theta$-extensions and $\theta$-antiderivatives.

\begin{Theorem}\label{nonintext} {\rm  
  \begin{enumerate}[i.]
  \item NBG$^-$+ DC proves the following. If there exists an ordered field $V$ extending  $\RR$, a positive infinite $\theta\in V$,  and a  $\theta$-extension  or a  $\theta$-antiderivative functional
from $\mathcal{E}$  into $V$,  then there is a set of
reals that is not Baire measurable.
\item  NBG$^-$+ DC does not prove that there exists an ordered field $V$ extending  $\RR$, a positive infinite $\theta\in V$,  and a  $\theta$-extension or  a  $\theta$-antiderivative functional
 from $\mathcal{E}$  into $V$.
\item \label{itmt3} There are no three set-theoretic descriptions with parameters such that NBG proves the
following. There is an assignment of real
numbers to the parameters used in all three,
such that the first description uniquely
defines an ordered field extending $\RR$, the
second description uniquely defines a
positive infinite $\theta\in V$, and the third
description uniquely defines a $\theta$-extension 
functional  or a  $\theta$-antiderivative functional 
from $\mathcal{E}$  into $V$.

Part \ref{itmt3}  also holds for extensions of NBG by
standard large cardinal hypotheses.
  \end{enumerate}

}\end{Theorem}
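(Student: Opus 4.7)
The plan is to deduce all three parts from the core descriptive set-theoretic result Theorem \ref{T1.1.}, extending the reduction paradigm used in Lemmas \ref{LBanach}--\ref{Ldiscont} and Corollary \ref{ZFDC1} from the restrictive function spaces considered there to the much larger space $\mathcal{E}$ of real-restricted entire functions. The central and most delicate step is converting the hypothetical existence of a $\theta$-extension or $\theta$-antiderivative on $\mathcal{E}$ into $V$ into a real-valued linear functional on a Polish subspace with descriptive complexity that violates the regularity conclusion of Theorem \ref{T1.1.}.

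For part (i), I would topologize a suitable linear subspace $P\subseteq \mathcal{E}$ as a Polish space. Natural candidates include the Fr\'echet space of all entire functions equipped with the seminorms $\|f\|_r=\sup_{|z|\le r}|f(z)|$, or a separable Banach subspace in the spirit of $\mathcal{E}_e$, chosen so that $P$ contains enough ``test'' functions whose $e_\theta$-values can be pinned down from the monomial normalization by linearity and density. From a $\theta$-extension $e_\theta:\mathcal{E}\to V$ one extracts a real-valued linear functional $\varphi:P\to \RR$ by composing with a standard-part map---available on the finite elements of any non-Archimedean ordered extension of $\RR$---after renormalizing by an appropriate infinite element of $V$ built from $\theta$. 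The values $e_\theta(x^n)=\theta^n$ combined with linearity force $\varphi$ to take prescribed values on a dense subspace of $P$ that are incompatible with any continuous (equivalently, Baire measurable) linear extension, and Theorem \ref{T1.1.} then converts this pathology into a non-Baire subset of $\RR$. The argument for $a_\theta$ is the same after adjusting the normalization by one power.

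Part (ii) follows immediately: in the Solovay/Shelah model invoked in the proof of Theorem \ref{ZFHB}, NBG$^-$ + DC holds and every subset of $\RR$ has the Baire property, so by (i) no such $V$, $\theta$, and functional can exist there. For part (iii), I would argue by absoluteness. Any three set-theoretic descriptions with finitely many real parameters provably specifying $V$, $\theta$, and the functional would, via (i), provably specify a non-Baire subset of $\RR$ from the same parameters. Transferring the description into the Solovay/symmetric submodel obtained from a strongly inaccessible cardinal (for the bare NBG statement), or into the submodel obtained from a proper class of Woodin cardinals collapsed symmetrically (for the large-cardinal extension, after Solovay, Shelah--Woodin, and Martin--Steel), places the described subset into a pointclass every element of which has the Baire property, yielding a contradiction via the uniqueness clause and the relevant generic absoluteness theorem.

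The main obstacle I expect is executing the reduction in (i) uniformly enough that the resulting $\varphi$ has controlled \emph{descriptive complexity}, not merely pointwise pathology. Unlike Lemma \ref{Ldiscont}, where $V=\No$ permitted the explicit real-part extraction via the normal form, here $V$ is an arbitrary ordered field extending $\RR$, and the standard-part map must be built from the given data in a Borel-measurable way so that the non-Baire conclusion of Theorem \ref{T1.1.} is actually triggered. A secondary obstacle in (iii) is matching the complexity of the permitted descriptions against the regularity theorem invoked: for arbitrary set-theoretic formulas one goes well beyond Shoenfield's $\Sigma^1_2$ reach, so the large-cardinal case requires the stronger generic absoluteness and universal Baireness machinery to transfer the statement across models while preserving the uniqueness clause needed for the contradiction.
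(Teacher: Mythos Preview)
Your plan conflates two different mechanisms in the paper and, as a result, the invocation of Theorem~\ref{T1.1.} does not match the object you propose to build. Theorem~\ref{T1.1.} is \emph{not} a regularity theorem for discontinuous linear functionals on Polish spaces; it is specifically about \emph{positivity sets} in the Cantor space $X=\{-2,-1,0,1,2\}^{\NN}$ (Definition~\ref{D1.1.}). The discontinuous-functional route you sketch is the one behind Lemma~\ref{Ldiscont} and Corollary~\ref{ZFDC1}, which go through Theorem~\ref{Thomas} (Pettis/Thomas--Zapletal), not through Theorem~\ref{T1.1.}. So even if you managed the renormalization and standard-part extraction, you would not have produced the input Theorem~\ref{T1.1.} requires.

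More importantly, the obstacles you flag---building a standard-part map on an arbitrary ordered field $V$ with controlled descriptive complexity, and renormalizing $e_\theta(f)$ to land among the finite elements---are entirely avoidable. The paper's proof never extracts a real number from $V$ at all: it uses only the \emph{order} on $V$. Concretely, one maps $\xi\in X$ to the entire function $\rho(\xi)(x)=\sum_{n\ge 0}\xi_n x^n/n!$ and sets $S=\{\xi\in X: L_f(\rho(\xi))>0\}$, where $L_f$ is the given $\theta$-extension or $\theta$-antiderivative. Linearity of $L_f$ immediately gives conditions~i and~ii of Definition~\ref{D1.1.}, and condition~iii follows because an eventually-zero $\xi$ makes $\rho(\xi)$ a polynomial, whose value (or antiderivative's value) at the positive infinite $\theta$ has the sign of the leading coefficient. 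Parts~i--iii of Theorem~\ref{nonintext} then read off directly from parts~ii--iv of Theorem~\ref{T1.1.}: in particular, for part~iii no fresh absoluteness argument is needed, since descriptions of $V$, $\theta$, and $L_f$ yield a description with real parameters of the positivity set $S$, which Theorem~\ref{T1.1.}\,iv already rules out (including under large cardinals).
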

\begin{Note} {\rm  In Theorem \ref{nonintext}, ii follows immediately from i, and the well
known fact that NBG$^-$ + DC + ``every set of reals is has the
Baire property'' is consistent; see \cite{Solovay} and  \cite{Shelah}).
}
\end{Note}

\subsection{More general negative results}\label{S2.3O} We explore further spaces without regularity, and obtain stronger versions of the results above, in more generality than $\No$.

We  view {\em any}  $W:[0,\infty]\to \RR^+$ as a \emph{weight}.
\begin{Definition}[Growth class $W$]\label{GC}
	{\rm  Let $W:[0,\infty)_\RR\to \RR^+$. Define
			\begin{equation}
			\label{eq:grW}
		\mathcal{E}_W=\{f\text{ entire }, f(\RR)\subseteq\RR:	\sup_{z\in\CC}|f|/W(|z|)\le C<\infty\}.
			\end{equation}
}	
\end{Definition}
We now state an important if and only if result that provides a dichotomy. Our formulation requires the notion of a continuous weight given by an \emph{arithmetically  presented code}.
\begin{Definition}
   {\rm Let $W :\RR\to\RR^+ $ be a continuous weight. We say that $E\subset  \ZZ^4$  codes  $W$ if and only if $\forall (a,b,c,d) \in \ZZ^4$,  $a/b < W(c/d)$ just in case $(a,b,c,d) \in  E$. An arithmetic presentation of $E \subset  \ZZ^4$ takes the form $\{(a,b,c,d) \in \ZZ^4: \phi\}$, where $\phi$ is a formula involving $\forall, \exists, \neg, \vee , \wedge, +,-,\cdot,<,0,1,$  variables ranging over $\ZZ$,  with at most the free variables $a,b,c,d$. 
}\end{Definition}

Standard elementary and special functions with rational parameters are continuous functions that can be given by
arithmetically presented codes. Also, compositions of continuous functions that are given by arithmetically presented codes are continuous functions that are given by  arithmetically presented codes.\footnote{Note that here and in \cite{JFA}, we consider only arithmetically presented continuous functions. There is a broader notion of ``arithmetically
presented function" that includes many discontinuous Borel measurable
functions, which we do not need here or in \cite{JFA}. However, in \cite[page 4738, paragraph 3]{JFA}, the first and third authors erroneously confused the two notions.
That entire paragraph should be replaced with the
paragraph to which this note is affixed.
}

 \begin{Theorem}\label{2.1T71}
		{\rm  
                 
                  Let $W$ be a continuous weight given by an arithmetically presented code.\footnote{More precisely, we start with an arithmetically presented $E$ which,
provably in ZFC (or equivalently in ZF, NBG, NBG$^-$) codes a continuous
weight W$.$} The following are equivalent.

                  \begin{enumerate}[(a)]
                  \item NBG$^-$ proves that $\mathcal{E}_W$ consists entirely of polynomials from $\RR$ into $\RR$. 
                  \item There are three set-theoretic descriptions with parameters such
that NBG proves the following. There is an assignment of real
numbers to the parameters used in all three,
such that the first description uniquely
defines an ordered field extending $\RR$, the
second description uniquely defines a
positive infinite $\theta\in V$, and the third
description uniquely defines a $\theta$-extension 
functional  or a  $\theta$-antiderivative functional 
from $\mathcal{E}_W$  into $V$. 
                  \end{enumerate}    
The equivalence holds if NBG$^-$  is replaced by NBG or if both  NBG$^-$  and NBG are replaced by any common extension of NBG with standard large cardinal hypotheses.

 }
\end{Theorem}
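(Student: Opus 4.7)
The plan is to prove the two implications separately. The forward direction (a) $\Rightarrow$ (b) is a direct construction; the reverse direction is a contrapositive argument that adapts the scheme of Theorem \ref{nonintext} to the weighted space $\mathcal{E}_W$, invoking Theorem \ref{T1.1.}.

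For (a) $\Rightarrow$ (b), I would exhibit explicit descriptions, all parameter-free. Take $V$ to be the ordered field $\RR(t)$ of rational functions, ordered so that $t$ is positive and infinite (this is described by the usual first-order formula comparing leading coefficients), take $\theta := t$, and define
\[
e_{\theta}\Bigl(\sum_{i=0}^{n} a_i x^{i}\Bigr):=\sum_{i=0}^{n}a_i t^{i}, \qquad a_{\theta}\Bigl(\sum_{i=0}^{n}a_i x^{i}\Bigr):=\sum_{i=0}^{n}a_i\, t^{i+1}/(i+1).
\]
Because (a) asserts that NBG$^-$ proves every $f\in\mathcal{E}_W$ is a polynomial, NBG proves the same, so each of the three descriptions uniquely picks out an object, and the second and third clauses of Definitions \ref{DefExtf} and \ref{DefAf} are automatically satisfied. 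Composing with any canonical embedding of $\RR(t)$ into $\No$ sending $t\mapsto\omega$ yields the statement in its original form if one insists on $V=\No$.

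For (b) $\Rightarrow$ (a), argue by contrapositive. Suppose NBG$^-$ does not prove that $\mathcal{E}_W$ consists only of polynomials; I must rule out the existence of the three descriptions. Pick a model of NBG$^-$ in which some non-polynomial $h\in\mathcal{E}_W$ exists, and pass to a forcing extension to a model of NBG in which $h$ still lies in $\mathcal{E}_W$ (absoluteness of arithmetic presentability and of the $L^\infty$ bound keep $\mathcal{E}_W$ stable under transitive extensions). In this model, $\mathcal{E}_W$ is a Banach space for the norm $\|f\|_W:=\sup_{z\in\CC}|f(z)|/W(|z|)$, and contains a separable closed subspace $\mathcal{E}_W^0$ in which $1,x,x^2,\ldots$ and a suitably rescaled analogue of the family $f_n(z)=ne^{-z}(1-e^{-z/n})$ from Lemma \ref{LBanach} all sit, where the rescaling uses $h$ plus dilations/translations to produce non-polynomial witnesses with $\|g_n\|_W\to 0$ but $\varphi(g_n)=1$ for the homomorphism $\varphi$ constructed below. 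This is the main obstacle: one has to use the continuity of $W$ and the existence of one non-polynomial in $\mathcal{E}_W$ to manufacture such a witness sequence inside $\mathcal{E}_W$ itself; this is done by taking $h_n(z):=c_n h(z/n)$ with $c_n$ chosen so that $\|h_n\|_W\to 0$ while the leading coefficient in the normal form of $e_\theta(h_n)\in V$ does not vanish (using that $h$ is not a polynomial, and continuity of $W$ to control the norms).

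With such witnesses in hand, define $\varphi:\mathcal{E}_W^0\to\RR$ by extracting the coefficient of a suitable power $\theta^{c}$ in the normal form of $e_\theta(f)\in V$ (or of $a_\theta(f)$ for the antiderivative version); this is the analogue of the homomorphism in Lemma \ref{Ldiscont}. Since $e_\theta$, $\theta$, and $V$ are assumed uniquely set-theoretically definable in NBG, so is $\varphi$; then the descriptive set-theoretic content of Theorem \ref{T1.1.} forces $\varphi$ to be Baire-measurable, hence continuous by Pettis's theorem. But continuity is contradicted by the witness sequence $g_n=n^{-1}h_n$ with $\varphi(g_n)=1$ and $\|g_n\|_W\to 0$. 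This completes the contrapositive, giving (b) $\Rightarrow$ (a). The uniform statement for extensions by standard large cardinal hypotheses follows because both the absoluteness of $\mathcal{E}_W$ (arithmetic presentation) and the descriptive set-theoretic consequences of Theorem \ref{T1.1.} are preserved under these extensions.
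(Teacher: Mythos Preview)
Your forward direction (a) $\Rightarrow$ (b) is fine and makes explicit what the paper calls ``immediate.''

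Your reverse direction has a genuine gap. The hypothesis (b) posits only an ordered field $V$ extending $\RR$; there is no assumption that $V$ is $\No$ or a Hahn field, so there is no ``normal form'' and no ``coefficient of $\theta^{c}$'' to extract. Your construction of the homomorphism $\varphi:\mathcal{E}_W^0\to\RR$ by reading off such a coefficient therefore has no meaning in general, and the Pettis-style argument collapses at the outset. Separately, the witness family $h_n(z)=c_n h(z/n)$ is not under control: nothing about a continuous arithmetically presented $W$ guarantees $h(\cdot/n)\in\mathcal{E}_W$, let alone that $\|h_n\|_W\to 0$ while a putative ``leading coefficient'' of $e_\theta(h_n)$ survives.

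The paper's proof avoids both problems by working only with the order on $V$. From the weight $W$ itself it builds an explicit entire function $\varphi_W(z)=\sum_k a_k z^k$ with $a_k=2^{-k}\inf_{x\ge 1}x^{-k}W(x)>0$ (the positivity of these infima is exactly equivalent, via Cauchy's estimate, to $\mathcal{E}_W$ containing a non-polynomial). Then for $c\in X=\{-2,-1,0,1,2\}^{\NN}$ one has $c\star\varphi_W:=\sum_k c_k a_k z^k\in\mathcal{E}_W$, and the set $\{c\in X: e_\theta(c\star\varphi_W)>0\}$ is a positivity set in the sense of Definition~\ref{D1.1.}: conditions (i), (ii) use only linearity of $e_\theta$ and the order on $V$, and (iii) holds because an eventually-zero $c$ makes $c\star\varphi_W$ a polynomial whose sign at the infinite $\theta$ is governed by its leading term. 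One then feeds this into the forcing/absoluteness argument and invokes Theorem~\ref{T1.1.}. The point is that only the predicate ``$>0$'' in $V$ is used, never any coefficient extraction; this is what you are missing.
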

The next question is: can we integrate (or extend past $\infty$) real-analytic functions for which the real integral $\int_1^\infty |f|<\infty$, or that decrease much faster, to ordered fields such as $\No?$ To help prepare the way for answering this question we require a series of definitions, some of which make use of our generalization of the notion of a derivative of a function at a point; see Definition \ref{DefDeriv}.

 \begin{Definition}
  {\rm An \emph{exponentially adequate ordered field} is an ordered
field $V$  extending $\RR$ together with an order preserving mapping $\exp=(x\mapsto e^x)$ (from the additive group of $V$ onto the multiplicative group of positive elements of $V$) with the properties $e^x\gg x^n$ for all positive infinite $x$ and $(e^x)'=e^x$}.  
\end{Definition}
\begin{Definition}
  {\rm For each $m\in\ZZ^+$, let $\exp_m$ be the $m$-th compositional iterate of $\exp$,  $W_m:\RR\to \RR^+$ be the weight $1/\exp_m$, and $\mathcal{A}_{W_{m}}=\{f\in \mathcal{E}:	\sup_{\RR^+}|f|/W_{m}<\infty\}$. 
 } \end{Definition}
 
 The functions in the $\mathcal{A}_{W_{m}}$``decrease superexponentially''.

\begin{Definition}\label{DefWexp}
{\rm  Exponentially adequate $\theta$-extension functionals and
exponentially adequate $\theta$-antidifferentiation functionals are
$\theta$-extension functionals and $\theta$-anti\-differentiation functionals
obeying the following additional condition for all $m,n \in \ZZ^+$. 

If $f=x^ne^{-W_m}W_m^{'}(1-n/(xW_m^{'}))  $, then $e_{\theta,V}(f)=f(\theta)$ and $a_{\theta,V}(f)=-\theta^ne^{-W_m(\theta)} $. This  corresponds to the antiderivative of $f$ without constant term.
}\end{Definition}

\begin{Theorem}\label{expogr.}{\rm  i. NBG$^-$+ DC proves the following. If there exists an exponentially adequate ordered field $V$, a positive
infinite $\theta$ in $V$, and an exponentially adequate $\theta$-extension or
$\theta$-antiderivative functional from some $\mathcal{A}_{W_{m}}$ into $V$, then there is
a set of reals that is not Baire measurable.

  \item ii. NBG$^-$+ DC  does not prove that there exists an exponentially adequate
ordered field $V$, a positive infinite $\theta$ in $V$, and an
exponentially adequate $\theta$-extension or $\theta$-antiderivative
functional from some $\mathcal{A}_{W_{m}}$ into $V$.

  \item iii. There are no three set-theoretic descriptions with parameters such that NBG 
proves the following. There is an assignment of real numbers to the
parameters used in all three, such that the first description uniquely
defines an exponentially adequate ordered field, the
second description uniquely defines a positive infinite $\theta$ in $V$,
and the third description uniquely defines an exponentially adequate
$\theta$-extension functional or $\theta$-antiderivative function from some
$\mathcal{A}_{W_{m}}$ into $V$.
\noindent

Part iii also holds for extensions of NBG by standard large cardinal hypotheses.

\vspace{0.08cm}
}\end{Theorem}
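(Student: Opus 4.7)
The proof follows the pattern of Theorem \ref{nonintext}, with the space $\mathcal{A}_{W_m}$ replacing $\mathcal{E}$ and the ``antidifferentiation witnesses'' singled out in Definition \ref{DefWexp} playing the role of the generating family $\{f_n\}$ of Lemma \ref{LBanach}. The central generators are
$g_n(x):=x^n e^{-W_m(x)}W_m'(x)\bigl(1-n/(xW_m'(x))\bigr)=-\tfrac{d}{dx}\bigl(x^ne^{-W_m(x)}\bigr),\qquad n\in\ZZ^+.$
Each $g_n$ lies in $\mathcal{A}_{W_m}$ by the super-exponential decay of $e^{-W_m}$, and their closed linear span $\mathcal{A}_m^0\subseteq\mathcal{A}_{W_m}$ is a separable Banach space, hence a Polish space, exactly as in Lemma \ref{LBanach}.

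For part i, suppose $V$, $\theta$, and an exponentially adequate $\theta$-extension $e_{\theta,V}$ or $\theta$-antiderivative $a_{\theta,V}$ into $V$ exist. Definition \ref{DefWexp} pins down the values $e_{\theta,V}(g_n)=g_n(\theta)$ and $a_{\theta,V}(g_n)=-\theta^n e^{-W_m(\theta)}$. Paralleling Lemma \ref{Ldiscont}, one chooses a surreal scaling factor $c(\theta)\in V$ together with real scalars $\lambda_n\to 0$ such that, upon forming the real parts of the normal forms (see \S\ref{S2}), the composite
$\varphi(f):=\re\bigl(c(\theta)\,a_{\theta,V}(f)\bigr)\quad(\text{or }\varphi(f):=\re\bigl(c(\theta)\,e_{\theta,V}(f)\bigr))$
extended by linearity to $\mathcal{A}_m^0$ satisfies $|\varphi(\lambda_n g_n)|\geq1$ while $\|\lambda_n g_n\|\to 0$. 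This exhibits $\varphi$ as a discontinuous additive homomorphism between Polish groups, and Pettis's theorem yields a subset of $\RR$ without the Baire property.

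Part ii follows immediately from part i together with the Solovay--Shelah consistency of NBG$^-$ + DC + ``every subset of $\RR$ has the Baire property'' (\cite{Solovay}, \cite{Shelah}), exactly as in Theorem \ref{nonintext}.ii. For part iii, one argues as in Theorem \ref{nonintext}.iii: three set-theoretic descriptions with real parameters that provably define $(V,\theta,e_{\theta,V})$ or $(V,\theta,a_{\theta,V})$ in NBG (or in an extension by standard large cardinal hypotheses) would render $\varphi$ itself definable, whence the non-Baire-measurable set extracted via Theorem \ref{T1.1.} would lie in a standard definable pointclass; but NBG together with the appropriate large cardinal hypotheses proves every such definable set has the Baire property, a contradiction.

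The main obstacle lies in the calibration step of part i: because $a_{\theta,V}(g_n)=-\theta^n e^{-W_m(\theta)}$ carries the $n$-dependent factor $\theta^n$, no single real scaling will produce a uniformly nonzero $\varphi(\lambda_n g_n)$. One must instead exploit the fact that $\re$ applied to the surreal normal form annihilates both purely infinite and purely infinitesimal contributions, in order to build a single linear functional witnessing discontinuity across all $n$ simultaneously -- precisely as the single factor $e^\omega$ served this role uniformly for the $f_n$ in Lemma \ref{Ldiscont}. The rigidity supplied by Definition \ref{DefWexp} is what makes such a uniform choice possible in the super-exponentially decaying regime.
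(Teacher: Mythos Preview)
Your approach follows the discontinuous--homomorphism template of Lemma~\ref{Ldiscont} and Theorem~\ref{Thomas}, whereas the paper's proof follows the positivity--set template of Theorem~\ref{T1.1.} and Proposition~\ref{constrphi}. Concretely, the paper takes the auxiliary weight $\tilde W(x)=x^{\ln x}$, forms $\varphi_{\tilde W}$ with strictly positive Taylor coefficients as in Proposition~\ref{constrphi}, and considers the family $e^{-\exp_m}\cdot\varphi_{\tilde W}^{\triangleright}\subset\mathcal{A}_{W_m}$. For eventually zero $\xi\in X$ these are finite linear combinations of the functions singled out in Definition~\ref{DefWexp}, so the exponentially adequate hypothesis pins down $A f_\xi(\theta)$ as $-e^{-\exp_m(\theta)}$ times a polynomial in $\theta$ whose sign is that of the last nonzero $\xi_n$. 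Hence $\{\xi\in X: A f_\xi(\theta)<0\}$ is a positivity set, and one invokes Theorem~\ref{T1.1.} directly.

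Your route has two genuine gaps. First, the functional $\varphi(f)=\re\bigl(c(\theta)\,a_{\theta,V}(f)\bigr)$ presupposes a real--part map $\re:V\to\RR$ coming from normal forms; that is specific to $\No$, while Theorem~\ref{expogr.} is stated for an \emph{arbitrary} exponentially adequate ordered field $V$. The positivity--set approach needs only the sign of an element of $V$, which is available in any ordered field. Second, even in $\No$ the ``calibration'' you yourself flag as the main obstacle is not resolved, and in fact cannot be resolved as stated: the values $a_\theta(g_n)=-\theta^n e^{-\exp_m(\theta)}$ lie in pairwise distinct Archimedean classes as $n$ varies, so no single surreal factor $c(\theta)$ can place all of $c(\theta)a_\theta(g_n)$ in the finite, non--infinitesimal range where $\re$ is nonzero. (Contrast Lemma~\ref{Ldiscont}, where $e^{\omega}\mathsf{E}(f_n)=n(1-e^{-\omega/n})$ has real part $n$ uniformly, precisely because the exponents in the $f_n$ were chosen independent of $n$.) Your invocation of Theorem~\ref{T1.1.} in part~iii is then inconsistent with the Pettis argument used in part~i; the paper runs the positivity--set argument throughout.
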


\subsection{Positive results about extensions and integrals in $\No$ for functions with regularity conditions at infinity}\label{Sconstr} $ $\\

In the previous sections we have obtained ad hoc extension and antidifferentiation operators and showed that there are no natural ones unless there are restrictions on the behavior of the functions at the endpoints.

In this section, we show that when certain natural restrictions are present, genetically defined (see footnote \ref{f1}) integral operators, and extensions with good properties do exist. Integrals are defined after constructing linear antidifferentiation operators, see Definition \ref{Dd2}; also see Proposition \ref{existint} and Note \ref{exp} below. Moreover, as we will see, for all special functions for which a genetically defined integral exists, its value at $\omega$ determines the transseries of the function, which in turn completely determines the function.  The domain of applicability of these genetically defined
integral operators, and extensions includes a wide range of functions arising in applications.
\begin{Note}
  {\em In the following, without loss of generality, we  assume that  the functions of interest are defined on $(x_0,\infty)$, where $x_0$, which is assumed to be in $\RR \cup\{-\infty\}$, may depend on the function and we seek extensions and integrals thereof to $\{x\in\No:x>x_0\}$.
See \S\ref{Srestr} for details.}\end{Note}

\subsubsection{\bf The family $\mathcal{F}$ of functions for which we obtain ``good'' extensions and integrals}}\label{3.4.1.} Our positive results apply when there is complete information about the behavior of functions at $+\infty$ in the sense described above. We denote the family of such functions $\mathcal{F}$. It is convenient to first treat a proper subclass $\mathcal{F}_a$ of $\mathcal{F}$ that is simpler to analyze than $\mathcal{F}$ itself.

\begin{enumerate}[(a)]
\item $\mathcal{F}_a$ consists of  functions which have  convergent Puiseux-Frobenius power series in integer or noninteger powers of $1/x$ at $+\infty$. Namely, for some $k\in\QQ$,  $r\in\RR$, and all    $x>r$, we have
\begin{equation}\label{PSeq}
f(x)=\sum_{j=-M}^{\infty}c_k x^{-j/k}.
\end{equation}
Such is the case of functions which at $+\infty$ are semialgebraic, analytic, meromorphic and semi-analytic, to name some of the most familiar ones. By \cite{VdDries}, semi-analytic functions at $+\infty$ have convergent Puiseux series in powers of $x^{1/k}$ for some  $k\in\ZZ^{+}$. We could allow \eqref{PSeq}  to also contain exponentials and logs, if convergence is preserved.
\item  The class $\mathcal{F}$ is much more general. It consists of  functions which, after changes of variables, have for $x>x_0$ Borel-\'Ecalle transseries  of the form
\begin{equation}\label{trans2}
  \tilde{T}=  \sum_{\bfk\ge \bfk_0,l\in\NN}c_{\bfk,l}x^{\boldsymbol{\beta}\cdot \bfk}e^{-\bfk \cdot \boldsymbol{\lambda} x}x^{-l},
  \end{equation} where $\bfk,\bfk_0\in\ZZ^n, \boldsymbol{\beta}=(\beta_1,...,\beta_n),\boldsymbol{\lambda}=(\lambda_1,...,\lambda_n)$  \text {and} $c_{{\bf {k}},l}, \beta_j,  \lambda_j \in \RR$.   We assume $ k_j\lambda_j$ be present for infinitely many $k_j$ {\bf only}
 if $\lambda_j>0$. For a technical reason---being able to employ Catalan averages---we assume as in \cite{IMRN} that all $\beta_i\le 1$; it can then be arranged that  $\beta_i\in (0,1]$.   We  arrange  that  the critical \'Ecalle time is $x$ (\cite{IMRN,Book, Duke, Ecalle1,Ecalle2}).\footnote{We  could allow  the constants $c_{\bfk,l}, \boldsymbol{\beta}$ and  $\boldsymbol{\lambda}$ to be complex-valued--and then we would end up with a class of {\em sur-complex functions}.    We would then require $\Re\beta_j<1$  (\cite{IMRN,Duke}), it being understood that $\Re$ denotes the real part in the sense of complex analysis (which should not be confused with ``re'' which denotes the real part of a surreal number written in normal form; see \S\ref{S2}).}

$\mathcal{F}$ contains the solutions of linear or nonlinear systems of ODEs or of difference equations for which, after possible changes of variables, $+\infty$ is at worst an irregular singularity of Poincar\'e rank one. We will impose still further technical restrictions, to keep the analysis simple. The setting is essentially that of \cite{IMRN,Duke} for ODEs and \cite{Braaksma} for difference equations. More generally, $\mathcal{F}$  includes elementary functions and all named functions arising in the analysis of ODEs, difference equations and   PDEs. The named classical functions in analysis that satisfy some differential or difference equation such as Airy Ai and Bi, Bessel $H,J,K,Y$,  $\Gamma$, $_2F_1$, and so on  are real analytic in $x$ for $x>0$ and belong  to $\mathcal{F}$.
\end{enumerate}

\begin{Proposition}[\emph{Existence of an integral operator}]\label{existint}
{\rm Let $A$ be a strong antidifferentiation operator on $K$. Then there exists an \emph{integral operator} on $K$, meaning  a function of three variables, $x,y\in\No$ and $f\in K$, denoted as usual $\int_x^y f$, with the following properties:  
\begin{enumerate}[(a)]
 \item $\displaystyle \left(\int_a^x f\right)'=f\ \ \forall a,x \in \mathrm{dom}(f)$;
 \item $\displaystyle \int_a^b(\alpha f+\beta g)=\alpha \int_a^b f+\beta \int_a^b g\ \ \ \forall  a,b\in \mathrm{dom}(f)\cap \mathrm{dom}(g), (\alpha,\beta)\in\RR^2$;
\item $\displaystyle \int_a^b f' =f(b)-f(a)$;
 
  \item  $\displaystyle \int_{a_1}^{a_2} f+\int_{a_2}^{a_3}f=\int_{a_1}^{a_3} f\ \ \  \forall a_1,a_2,a_3 \in \mathrm{dom}(f)$;
 
  \item  $\displaystyle\int_a^b f'g=fg|_a^b-\int_a^b fg'\ \ \text{if $f,g$ are differentiable and} \; a,b\in \mathrm{dom}(f)\cap \mathrm{dom}(g)$;
  \item \label{itemT5}   $\displaystyle\int_a^x f(g(s))g'(s)ds=\int_{g(a)}^{g(x)}f(s)ds,\\ $
whenever $g\in K$ is differentiable, $a,x\in \mathrm{dom}(f)$, \text{and} $g([a,x])\subset \mathrm{dom}(f)\cap \mathrm{dom}(Af)$;

\item $f\ge 0$ and $b>a$ are in dom$(f)$  imply $\int_a^b f\ge 0$.
  \end{enumerate}
  
}\end{Proposition}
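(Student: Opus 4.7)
The natural definition is $\int_a^b f := A(f)(b) - A(f)(a)$ for $a,b \in \mathrm{dom}(f)^*$, and the plan is to deduce (a)--(g) from this formula together with the defining properties of a strong antidifferentiation operator. Properties (a), (b) and (d) are essentially immediate: (a) holds because $A(f)'$ extends $f$ by clause (i) of Definition \ref{Dd2}, giving $\bigl(\int_a^x f\bigr)'(x) = A(f)'(x) = f(x)$; linearity (b) is the linearity of $A$; and the additivity over intervals (d) is a telescoping cancellation in the definition.

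For the ``second half'' of the fundamental theorem (c), I would invoke the defining property of a \emph{strong} antidifferentiation operator: when $f$ is differentiable with $(f|_{\RR})' \in K$, taking $F := f$ in that hypothesis produces a constant $C \in \No$ satisfying $A((f|_\RR)') = f + C$, so $\int_a^b f' = A(f')(b) - A(f')(a) = f(b) - f(a)$. Integration by parts (e) then follows by applying (c) to $(fg)'$, provided the product rule $(fg)' = f'g + fg'$ holds for functions differentiable in the sense of Definition \ref{DefDeriv}; this is a routine $\varepsilon$--$\delta$ verification in the ordered field. Change of variables (f) is analogous: set $H(x) := A(f)(g(x))$, use the chain rule (likewise from Definition \ref{DefDeriv}) to obtain $H'(x) = f(g(x))g'(x)$, apply the strong condition to conclude $A\bigl(f(g(\cdot))g'(\cdot)\bigr) = H + C_1$ for some $C_1 \in \No$, and then telescope to identify $\int_a^x f(g(s))g'(s)\,ds$ with $\int_{g(a)}^{g(x)} f$.

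The step I expect to be the main obstacle is positivity (g). Over a general non-Archimedean ordered field, ``$F' \geq 0$ on $[a,b]$ implies $F(b) \geq F(a)$'' is not automatic: the standard mean-value-theorem proof uses continuity together with attainment of extrema, and a differentiable surreal function on a surreal interval need not attain its supremum, nor is $\No$ Dedekind complete for set-sized subsets. Rather than rely on an abstract monotonicity principle, I would appeal to the specific structure under which $A$ is built in \S\ref{Sconstr}: each $A(f)$ admits a Borel--\'Ecalle transseries (or Puiseux--Frobenius) representation on the surreal extension of its domain, and the sign of $A(f)(b) - A(f)(a)$ for $b > a$ can be read off from the leading monomial of the resulting Conway normal form. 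Concretely, writing $F(x) := A(f)(x) - A(f)(a)$, I would verify term by term, against the transseries for $f$ obtained by formally differentiating that of $A(f)$, that whenever $f \geq 0$ on $[a,b]$ the leading exponent and coefficient of $F(b)$ are non-negative. This ties (g) to the constructive content of the positive results rather than to an abstract monotonicity principle which, in the generality of Definitions \ref{DefDeriv} and \ref{Dd2} alone, may not be available.
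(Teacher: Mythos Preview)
Your handling of (a)--(f) matches the paper's: define $\int_a^b f := A(f)(b)-A(f)(a)$, get (a), (b), (d) immediately, obtain (c) from the strong-antidifferentiation clause, and then reduce (e) and (f) to (c) via the product and chain rules. The paper says exactly this (in fewer words): ``(c) is the definition of a strong antiderivative, and (c), (e) and (f) are equivalent.''

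Where you diverge is (g), and here you have misread the hypothesis. In the statement, ``$b>a$ are in $\mathrm{dom}(f)$'' refers to the domain of $f\in K\subseteq T[\RR]$, which is a \emph{real} interval; the surreal extension is $\mathrm{dom}(f)^*$. So $a,b\in\RR$. The paper's one-line argument is then available: $A(f)'$ extends $f\ge 0$, hence $(A(f))|_\RR$ has nonnegative derivative on $\mathrm{dom}(f)$, so by the ordinary real mean-value theorem $(A(f))|_\RR$ is nondecreasing, and $\int_a^b f = A(f)(b)-A(f)(a)\ge 0$. No surreal monotonicity principle, no transseries bookkeeping, no reading off signs from leading monomials is needed. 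That Proposition~\ref{Pgood} later singles out ``item (g) holds for \emph{any} finite $a,b\in\No$'' as an additional conclusion confirms that (g) in Proposition~\ref{existint} is only asserted for real endpoints.

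So your proof is correct but over-engineered at (g): the obstacle you anticipated is not there, and the argument you sketch---while plausible in the concrete transseriable setting of \S\ref{Sconstr}---is neither necessary nor what the paper does.
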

\begin{proof}
  Define $\int_{x}^{y} f=A(f)(y)-A(f(x))$. The properties follow straightforwardly. (c) is the definition of a strong antiderivative, and (c), (e) and (f)  are equivalent. If $f\ge 0$, then $(A(f))|_{\RR}$ is non-decreasing and thus $\int_a^b f=A(f(b))-A(f(a))\ge 0$.
\end{proof}

\begin{Note}{\rm 
The functions in $\mathcal{F}_a$ are much easier to deal with. They can be extended past $+\infty$ essentially by reintepreting their Puiseux series at $+\infty$ as a normal form of a surreal variable. The Puiseux series can then be integrated term by term, resulting in a ``good'' integral. A genetic definition may  then be obtained by minor adaptations of the construction in \cite{FO}. 

For the general family $\mathcal{F}$ we need the machinery of generalized \'Ecalle-Borel summability of transseries \cite{Book, Ecalle1,Ecalle2} and Catalan averages \cite{Menous} to {\em establish the properties of the extensions and integrals} but  not necessarily to calculate them. We will separate the technical constructions accordingly, to enhance readability.
}\end{Note}
\begin{Note}\label{exp}
  \rm Since the surreal exponential $\exp$ (see \S\ref{S2}) is surreal-analytic at any point, it is clear that any of its antiderivatives is of the form  exp+$C$ where $C$ is a constant, at least locally.  We show that  an antidifferentiation operator is defined on a wide class of functions; this antidifferentiation operator gives  $C=0$ everywhere for exp. Via Proposition \ref{existint}, $C=0$ translates into $\int_0^{\omega} e^s ds=e^{\omega}-1$ as expected. We note that this stands in contrast to Norton's aforementioned proposed definition of integration which was shown by Kruskal to integrate $e^s$ over the range $[0,\omega]$ to the wrong value $e^\omega$ \cite[page 228]{CO2}.
\end{Note}
\begin{Note}[\emph{Cautionary Note}]\label{NCautionary}
  {\rm A general $C^\infty$ function $f$  cannot  be (correctly) extended in an infinitesimal neighborhood of a point by its Taylor series. This is the case even if the Taylor series converges--unless, of course, the series converges to $f$, in which case $f$ is  {\em analytic} at that point. An example is $e^{-1/x^2}$ extended by zero at zero. The Taylor series at $0$ is convergent (trivially) since it is the zero series. But $e^{-1/x^2}$ is not $0$ in $\No$ for infinitesimal arguments. This function has a convergent {\em transseries}, also trivially, since $ e^{-1/x^2}$ is its own transseries, and therefore provides a correct extension.} 
\end{Note}

\vspace{0.3cm}
The precise technical setting is as follows.
\begin{Theorem}[Existence of extensions and strong antiderivatives]\label{Tmainpos}
  {\rm On $\mathcal{F}$ there exist
    \begin{enumerate}[(a)]
    \item an extension operator $\mathsf{E}$ from $\mathcal{F}$ to $\mathcal{F^*}=\{f^*:f \in \mathcal{F} \}$ (see \S 1.1) which is linear, multiplicative and preserves exp, log and $x^r$ for $r\in\RR$.

\item \label{itemTA} a \emph{linear}   operator $A:\mathsf{E}(\mathcal{F})\to\mathcal{F^*}$ with the properties
  \begin{enumerate}[1.]
  \item  $[A(f)]'=f$;
\item If $F\in T[\No]$ and $(F|_\RR)'=f \in \mathcal{F}$ exists, then there is a $C\in \No$ such that $A(f)=F+C$.
  \end{enumerate}
\end{enumerate}
}\end{Theorem}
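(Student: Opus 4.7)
The plan is to define $\mathsf{E}$ and $A$ through Borel-\'Ecalle summation of transseries, exploiting the fact that in $\No$ every transseries $\tilde T$ of the form \eqref{trans2} converges in the absolute Conway sense for infinite surreal arguments. First I would set up a canonical one-to-one correspondence between each $f\in\mathcal{F}$ and its transseries $\tilde T$ using the generalized Borel-\'Ecalle summability theorems together with Catalan averages (the reason for restricting to $\beta_i\in(0,1]$). By the results of \cite{IMRN,Book,Ecalle1,Ecalle2,Menous}, each $f\in\mathcal{F}$ equals the Catalan-averaged Borel sum of a \emph{unique} transseries $\tilde T$, and the formal operations $+,\cdot,\circ,\partial$ and term-by-term antidifferentiation on transseries commute with Borel summation.

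For part (a), given $x\in\No$ with $x>x_0$, I would define $\mathsf{E}(f)(x)$ by cases, all tied back to $\tilde T$. For finite $x$ not infinitesimally close to any real singularity of $f$, surreal analyticity (Conway-convergence of the local Taylor series at the real shadow $\mathrm{st}(x)$) gives the value. For infinite $x$, or for $x$ infinitesimally close to a real endpoint where after the standard change of variable the transseries description applies, I substitute the surreal variable into each term of $\tilde T$: surreal exponentials $e^{-\lambda x}$, surreal powers $x^{\alpha}$ (genetically defined through $\exp$ and $\log$), and surreal logarithms are already in the literature, and the resulting formal sum is absolutely Conway-convergent because the ``leaders'' form a reverse-well-ordered monomial set with real coefficients. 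Linearity, multiplicativity and preservation of $\exp$, $\log$ and $x^r$ transfer immediately from the corresponding compatibilities at the level of formal transseries and from the genetic definitions of those three functions on $\No$.

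For part (b), I would perform formal term-by-term antidifferentiation on $\tilde T$, using the reduction formulas for $\int x^{\alpha}e^{-\lambda x}\,dx$ expressed through the incomplete gamma / exponential-integral family, each of which is itself Borel-\'Ecalle summable and belongs to $\mathcal{F}$. Let $\widetilde{AT}$ denote the resulting formal antiderivative transseries, let $G$ be its Borel sum on $\RR$, and set $A(f):=\mathsf{E}(G)$. The sanity check $\int_{0}^{\omega}e^{s}\,ds=e^{\omega}-1$ in Note \ref{exp} confirms that the constants arising from this scheme agree with the Hadamard finite part at $+\infty$ used in \cite{JFA}. Property (b)(1) is immediate from the commutation of $\partial$ with Borel summation and with Conway-convergent substitution. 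For (b)(2), if $F\in T[\No]$ satisfies $(F|_{\RR})'=f$, then $F|_\RR$ and $A(f)|_{\RR}$ differ by a real constant $c$; the transseries of $F-c$ on $\RR$ is forced term-by-term by the ODE $(F-c)'=f$ modulo its leading constant, so by uniqueness of Borel-\'Ecalle summation and of the Conway-convergent surreal evaluation, $F-A(f)$ is a surreal constant on $\mathrm{dom}(f)^{*}$.

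The main obstacle is to show that the formally antidifferentiated transseries still lies in a Borel-summable class, i.e.\ that $A$ maps $\mathcal{F}$ into itself. Antidifferentiation can produce logarithmic monomials and new exponentials, so the admissible exponent set in \eqref{trans2} must be shown to be closed under this operation; this is precisely the content of \'Ecalle's analyzability theorems combined with the Catalan averaging of \cite{Menous}, which resolves the Stokes obstruction along the critical direction $x\to+\infty$. A secondary technical point is the consistency of $\mathsf{E}$ across the different ``charts'' (near a real interior point, near an endpoint after change of variable, and at infinity); this I would handle by standard surreal analytic-continuation arguments together with the fact that the transseries of $f$ at $+\infty$ already determines $f$ on its whole connected domain of definition, so the various local prescriptions must patch.
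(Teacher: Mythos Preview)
Your approach is essentially the paper's: define $\mathsf{E}$ by local analyticity at finite surreals and by reading the transseries \eqref{trans2} as a Conway-absolutely-convergent normal form at infinite surreals, then antidifferentiate term by term and invoke Borel--\'Ecalle summability (via Catalan averages) for closure. Two minor differences worth noting: the paper establishes closure of $\mathcal{F}$ under antidifferentiation by a direct Borel-plane computation (solving $(a+p)G'=(b-1)G-Y'$, see \eqref{eqy1B} in Proposition~\ref{termwise-int}) rather than through incomplete-gamma reductions, and it goes further than your outline by recasting $\mathsf{E}$ in genetic $\{L\mid R\}$ form using least-term truncation with explicit exponential error bounds (Definition~\ref{Dtildee=e} and Proposition~\ref{tildee=e}).
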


The definitions of $A$ and $\mathsf{E}$ in the just-stated theorem follow Norton's original integral scheme, using inequalities with respect to earlier defined functions and earlier values of the integral. Unlike  Norton's definition which was found to be intensional \cite[page 228]{CO2}, ours are shown to only depend on the values of the functions involved. 

\vspace{0.3cm}
\z 
For instance the definition of Ei is
\begin{equation}\label{DefEi1}
\mathrm{Ei}(x)=\left\{e^x\hspace{-.7 em}\sum_{k\in \ell(x)} \frac{k!}{x^{k+1}}-Cx^{-1/2}, S^L \Bigg|e^x\hspace{-.7em}\sum_{k\in \ell(x)}\frac{k!}{x^{k+1}}+Cx^{-1/2}, S^R\right\}
\end{equation}
Here $\ell(x)$ is the ``least term'' set of indices
\begin{equation}
  \label{eq:defell}
  \ell(x): = \left \{ k\in \mathbb{N}:k\leq   x   \right \}.
\end{equation}
If $x$ is finite, $\ell(x)$ is a finite set and \eqref{DefEi1} equals the least term summation of the series, see Note \ref{Nlts} below. If $x$ is infinite, then clearly $\ell(x)=\NN$. In this case, the sum is to be interpreted as an absolutely convergent series in the sense of Conway (see \S\ref{S2}). The bounds in the sets $S^L$ and $S^R$ are based on local Taylor polynomials, as in \cite{FO}; see \S\ref{S4.3} for details.

\begin{Note}[Least term summation]\label{Nlts}{\rm 
 More specifically, when a series $\sum_{n=0}^\infty c_n z^n$  has zero radius of convergence, then $\limsup_n |c_n z^n|=\infty$, $\forall z\ne 0$.  Say $c_n=n!$ and $z=1/100$. Then,  using Stirling's formula for the factorial, it is easy to see that $n! z^n$ decreases in $n$  until approximately $n=100$, and then increases in $n$. In this case, the least term set of indices $\ell(z)$ is $\{0,1,...,100\}$. The least term is reached approximately at $n=100$ and is of the order $O(\sqrt{z}e^{-1/|z|})$. Summing to the least term means summing the first 100 terms when $z=1/100$, or the first $N$ terms if $z=1/N$. The error obtained in this way is proved in \cite{CK} to be exponentially small, rather than polynomially small as would be the case when summing a fixed number of terms of the  series.}
\end{Note}}
See \S\ref{SSomexam} for further examples.

\section{Surreal numbers: overview and preliminaries}\label{S2}

There are a variety of recursive \cite{CO2}, \cite{EH7}, \cite[also see, \cite{AE1}, \cite{AE2}]{EH1} and non-recursive \cite[page 65; also see, \cite{GO}]{CO2} \cite[page 242]{EH4} constructions of the class $\No$ of surreal numbers, each with their own virtues. For the sake of brevity, here we adopt Conway's construction based on sign-seqences \cite[page 65]{CO2}, which has been made popular by Gonshor \cite{GO}.

In accordance with this approach, a {\it surreal number\/} is 
a function $a : \lambda \to \{-,+\}$ where $\lambda$ is an ordinal called the \emph{length} of $a$.
The class $\No$ of surreal numbers so defined carries a canonical linear ordering: $a< b$ if and only if $a$ is
(\emph{lexicographically}) less than $b$ with respect to the linear ordering on
 $\{-,+\}$, it being understood that $- < \text{\emph{undefined}}\ < +$. 
As in \cite{EH5}, we define the canonical partial
ordering $<_s$ on $\No$ by: $a<_s b$ (``$a$ is \emph{simpler than} $b$") if and only if $a$
is a proper initial segment of $b$. 

A tree $\left\langle {A,{ < _A}} \right\rangle $ is a partially ordered class such that for each $x \in A$, the class 
$\left\{ {y \in A: y { < _A} x} \right\}$ of \emph{predecessors} of $x$ is a set well ordered by ${ < _A}$. If each member of  $A$ has two immediate successors and every chain in $A$ of limit length (including the empty chain) has 
one immediate successor, the tree is said to be a \emph{full binary tree}. Since a full binary tree has a level for each ordinal, the
universe of a full binary tree is a proper class.

\begin{Proposition}\label{sur1}{\rm

 $\left\langle {\bf No \, <, <_s} \right\rangle$ is a lexicographically ordered full binary tree.

}
\end{Proposition}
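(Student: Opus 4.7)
The plan is to verify, in order, the three structural ingredients in the definition of a ``lexicographically ordered full binary tree'': that $<_s$ makes $\mathbf{No}$ into a tree; that every node has two immediate $<_s$-successors while every chain of limit order type, including the empty chain, has a unique immediate successor; and that $<$ is the lexicographic order associated with this tree.

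First I would observe that $<_s$, being the proper initial-segment relation on sign sequences, is immediately a strict partial order. For any surreal $a:\lambda\to\{-,+\}$, its $<_s$-predecessors are exactly the restrictions $a\restriction\alpha$ for $\alpha<\lambda$; the map $\alpha\mapsto a\restriction\alpha$ is an order-isomorphism from $\lambda$ onto this predecessor set, which is therefore well-ordered. Hence $\langle\mathbf{No},<_s\rangle$ is a tree. Next, given $a$ of length $\lambda$, the two sign sequences of length $\lambda+1$ extending $a$ by $-$ and by $+$ are both $<_s$-above $a$, and any $b$ with $a<_s b$ has $|b|\ge\lambda+1$ with $b(\lambda)\in\{-,+\}$ determining which of the two it extends; thus each node has exactly two immediate successors.

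The only step requiring real care is the limit-chain clause. Given a chain $C\subseteq\mathbf{No}$ under $<_s$, any two members agree wherever both are defined, so the set-theoretic union $\bar a:=\bigcup_{c\in C}c$ is a function from some ordinal $\mu$ into $\{-,+\}$, hence itself a surreal. One first notes that $C$ is automatically well-ordered by $<_s$, because for any $c\in C$ the segment $\{d\in C:d<_s c\}$ sits inside the well-ordered predecessor set of $c$; this makes the notion ``limit length of $C$'' unambiguous. When $C$ has limit order type (in particular when $C=\emptyset$, so $\mu=0$), the element $\bar a$ does not lie in $C$, and every $b$ strictly $<_s$-dominating all of $C$ must extend $\bar a$; so $\bar a$ is the unique immediate $<_s$-successor of $C$, completing the full binary tree structure. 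Finally, the claim that $<$ is the lexicographic order is essentially definitional: for distinct $a,b$, either one is a proper initial segment of the other and the sign convention $-<\textit{undefined}<+$ fixes the comparison at the first position where only one is defined, or else $a$ and $b$ first disagree at some $\alpha<\min(|a|,|b|)$ and the comparison reads off $\{-,+\}$ there. Trichotomy and transitivity follow by short case analyses. The only mildly delicate point in the whole argument is the well-ordering of chains needed to interpret ``limit length'' correctly; everything else is a direct verification from the definitions.
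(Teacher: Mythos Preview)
Your verification is correct and complete. Note, however, that the paper does not supply its own proof of this proposition: it is stated in the preliminaries section as a known structural fact about $\mathbf{No}$ (with references to \cite{EH5}, \cite{EH7}), and the subsequent propositions are presented as consequences of it. So there is no ``paper's proof'' to compare against; your direct verification from the sign-sequence definition is exactly the sort of argument the cited references carry out, and each step you give is sound.
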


Central to the algebraico-tree-theoretic development of the theory of surreal numbers is the following consequence of Proposition \ref{sur1}.

\begin{Proposition}\label{sur2}{\rm

 If  $L$ and $R$ are (possibly empty) sub\emph{sets} of ${\bf No}$ for which every member of $L$ precedes every member of $R$ (written $L<R$), there is a \emph{simplest} member of ${\bf No}$ lying between the members of $L$  and the members of  $R$ \cite[pages 1234-1235]{EH5} and \cite[ Proposition 2.4]{EH7}. 
 }
\end{Proposition}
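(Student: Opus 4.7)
The plan is to construct the claimed simplest element $a$ by transfinite recursion on ordinal position, exploiting the full binary tree structure of $\langle \No, <, <_s \rangle$ established in Proposition \ref{sur1}, and then to verify simplicity directly by inductively comparing $a$ with any competitor.

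First I would define $a_\beta$, a sign sequence of length $\beta$, by recursion on $\beta$, starting from the empty sequence $a_0 = \emptyset$. At a successor stage: if $a_\beta$ (viewed as a surreal) already satisfies $L < a_\beta < R$, then stop and set $a := a_\beta$. Otherwise, extend by appending $+$ if some $\ell \in L$ has $\ell \ge a_\beta$, and by appending $-$ if some $r \in R$ has $r \le a_\beta$. These two alternatives are mutually exclusive, since $\ell \ge a_\beta$ and $r \le a_\beta$ together would give $\ell \ge r$, contradicting $L < R$; together with the stopping condition they are exhaustive. At a limit $\beta$, take $a_\beta$ to be the union of the earlier prefixes.

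The main obstacle, and the step I expect to be hardest, is termination. The key invariant is: if the recursion has not halted by stage $\beta$, then some $x \in L \cup R$ has $a_\beta$ as an initial segment. Verifying this requires a case analysis in the lexicographic order using the convention that \emph{undefined} lies strictly between $-$ and $+$. For instance, when $a_{\beta+1}$ is produced by appending $+$ because some $\ell \in L$ satisfies $\ell \ge a_\beta$, one checks that either $\ell = a_\beta$ or $\ell$ extends $a_\beta$ with $+$ at position $\beta$, so $a_{\beta+1}$ remains an initial segment of a witness in $L$; the symmetric case handles appending $-$. At a limit $\beta$, a pigeonhole on the (set-many) witnesses produces an $x \in L \cup R$ that serves as the witness cofinally often below $\beta$, and because the $a_\gamma$ are nested, $a_\beta$ is automatically an initial segment of that $x$. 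Since $L \cup R$ is a set, $\mu := \sup\{\mathrm{length}(x) : x \in L \cup R\} + 1$ is an ordinal, and the invariant forbids continuation past $\mu$, forcing termination.

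For simplicity, given any $b \in \No$ with $L < b < R$, I would induct on $\beta \le \mathrm{length}(a)$ to show that $\mathrm{length}(b) \ge \beta$ and that $b$ agrees with $a$ on all positions below $\beta$. The length bound follows because $a_\beta$ fails to separate $L$ from $R$ for $\beta < \mathrm{length}(a)$, so $b$ cannot equal $a_\beta$ and must be defined at $\beta$. The same lex comparison used in the termination argument, now applied with $b$ in place of the under-construction sequence, forces $b(\beta) = a(\beta)$. Hence $a$ is an initial segment of $b$, so $a \le_s b$, confirming that $a$ is the simplest surreal strictly between the members of $L$ and the members of $R$.
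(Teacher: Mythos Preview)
The paper does not supply its own proof of this proposition; it simply cites \cite{EH5} and \cite{EH7}. Your transfinite-recursion construction of the sign sequence, together with the induction showing every competitor $b$ extends $a$, is precisely the standard argument one finds in those references (and in Gonshor's book), so you are on the right track and the overall architecture is correct.

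That said, your verification of the termination invariant has two soft spots. First, in the successor step you write that if $\ell\in L$ satisfies $\ell\ge a_\beta$, then ``either $\ell=a_\beta$ or $\ell$ extends $a_\beta$ with $+$, so $a_{\beta+1}$ remains an initial segment of a witness in $L$.'' In the case $\ell=a_\beta$ this conclusion is simply false: $a_{\beta+1}$ is strictly longer than $\ell$, so $\ell$ is no longer a witness, and nothing you have said produces a new one in $L$ (indeed the next witness may have to come from $R$). Second, at limit $\beta$ the pigeonhole step is not justified: $L\cup R$ being a set does not by itself force a single $x$ to be a witness cofinally often when $|L\cup R|\ge\mathrm{cf}(\beta)$.

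Both issues disappear if you prove the invariant \emph{directly} at each non-halting stage rather than inductively tracking a witness. The key lemma is: for every $\beta$, every $\ell\in L$ with $\ell\ge a_\beta$ already satisfies $a_\beta\le_s\ell$, and symmetrically for $R$. To see this, suppose the first disagreement between $\ell$ and $a_\beta$ occurs at position $\gamma<\beta$. Then $\ell|_\gamma=a_\gamma$, and either $\ell(\gamma)=+$ with $a_\beta(\gamma)=-$, or $\ell(\gamma)$ is undefined with $a_\beta(\gamma)=-$; in either case $\ell\ge a_\gamma$ with $\ell\in L$, so the construction would have appended $+$ at stage $\gamma$, contradicting $a_\beta(\gamma)=-$. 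Hence no such $\gamma$ exists and $a_\beta\le_s\ell$. Now non-halting at $\beta$ gives some $\ell\ge a_\beta$ or $r\le a_\beta$, and the lemma immediately yields a witness of length $\ge\beta$ in $L\cup R$; termination by stage $\mu+1$ follows. Your simplicity argument is fine as stated.
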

 
 Co-opting notation introduced by Conway, the simplest member of  ${\bf No}$ lying between the members of  $L$ and the members of $R$  is denoted by the expression $$\{L|R\}.$$ 

Following Conway \cite[page 4]{CO2}, if $x=\left\{ L|R\right\}$, we write $x^{L}$ for the
\emph{typical} member of $L$ and $x^{R}$ for the typical member of $R$; for $x$ itself, we write $\left\{ x^{L}|x^{R}\right\} $; and $x=\left\{
a,b,c,...|d,e,f,...\right\} $ means that $x=\left\{ L|R\right\} $ where $%
a,b,c,...$ are the typical members of $L$ and $d,e,f,...$ are the typical
members of $R$. In accordance with these conventions, if $L$ or $R$ is empty, reference to the corresponding typical members may be deleted. So, for example, in place of $0=\left \{ \varnothing |\varnothing \right \}$, one may write $0=\left \{ | \right \}$.     

Each  $x\in\bf No$ has a \emph{canonical representation} as the simplest member of ${\bf No}$ lying between its predecessors on the left and its predecessors on the right, i.e.$$x=\{L_{s\left ( x \right )}| R_{s\left ( x \right )}\},$$ where  $L_{s\left( x \right)}=\left\{ {a \in {\bf No}:a <_s x\;{\rm{and}}\;a < x} \right\}$ and $R_{s\left( x \right)}=\left\{ {a \in {\bf No}:a <_s x\;{\rm{and}}\;x < a} \right\}$. 

By now letting $x=\{L_{s\left ( x \right )}| R_{s\left ( x \right )}\}$ and $y=\{L_{s\left ( y \right )}| R_{s\left ( y \right )}\}$, $+ , - $ and $\cdot$ are defined by recursion for all $x,y\in{\bf No}$ as follows, where  the typical members $x^L $,  $x^R$,  $y^L $ and  $y^R $ are understood to range over the members of  $L_{s\left( x \right)},R_{s\left( x \right)} ,L_{s\left( y \right)} $ and  $R_{s\left( y \right)} $, respectively.

\bigskip 
\emph{Definition of}  $x + y.$
 $$x + y = \left\{ {x^L  + y,x + y^L |x^R  + y,x + y^R } \right\}.$$
 
 \emph{Definition of}  $- x.$
 $$- x = \left\{ {- x^R | - x^L } \right\}.$$
 
 \emph{Definition of}  $xy.$
 
\begin{eqnarray*}
\!\!\!xy &=&\{x^{L}y+xy^{L}-x^{L}y^{L},x^{R}y+xy^{R}-x^{R}y^{R}| \\
&&\quad \quad \qquad \qquad \qquad \text{\quad }
x^{L}y+xy^{R}-x^{L}y^{R},x^{R}y+xy^{L}-x^{R}y^{L}\}\text{.}
\end{eqnarray*}

\medskip

Despite their cryptic appearance, the definitions of sums and products on $\No$ 
have natural interpretations that essentially assert that the sums and products of elements of $\No$ are the simplest possible elements of $\No$ consistent with $\No$'s structure as an ordered field \cite[page 1236]{EH4},  \cite[pages 252-253]{EH5}. The constraint on additive inverses, which is a consequence of the definition
of addition \cite[page 1237]{EH5}, ensures that the portion of the surreal
number tree less than $0$ is (in absolute value) a mirror image of the portion
of the surreal number tree greater than $0$, $0$ being the simplest element of the surreal number tree.

A subclass $A$ of $\No$ is said to be \emph{initial} if $b\in A$ whenever $a\in A$  and $b<_s a$. Although there are many isomorphic copies of the order field of reals in $\No$, only one is initial. This ordered field, which we denote $\RR$, plays the role of the reals in $\No$. Similarly, while there are many well-ordered proper subclasses of $\No$ in which $x<y$ if and only if $x<_s y$, only one is initial. The latter, which consists of the outermost right branch of $\left\langle {\bf No \, <, <_s} \right\rangle$, is identified as $\No$'s ordered class $On$ of ordinals. See Figure 1.

A subclass $B$ of an ordered class $\left \langle A,< \right \rangle$  is said to be \emph{convex}, if $z\in B$ whenever $x,y\in B$ and $x < z < y$. 

The following are consequence $\bf{No}$'s structure as a lexicographically ordered binary tree.

\begin{Proposition}[\cite{EH5}: Theorems 1 and 2]\label{sur3} {\rm

(i) Every non-empty convex subclass of $\No$ has a simplest member; (ii) if $x\in \bf No$ and $L,R$ are a pair of subsets of $\No$ for which $L<R$, then $x = \left\{ {L|R} \right\}$ if and only if $L<\left\{ x\right\} <R$
and $\left\{ a\in {\bf No}:L<\left\{ a\right\} <R\right\} {\it \subseteq }\left\{
a\in {\bf No}:L_{s(x)}<\left\{ a\right\} <R_{s(x)}\right\}$.

}
\end{Proposition}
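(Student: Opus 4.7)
My plan is to handle the two parts separately, leaning on the full binary tree structure of $\langle \No, <, <_s \rangle$ (Proposition \ref{sur1}) and, for part (ii), on the existence result of Proposition \ref{sur2}.

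For (i), I would argue as follows. Let $C \subseteq \No$ be a non-empty convex subclass. Since the length function sends each surreal to an ordinal and $On$ is well-ordered, the class of lengths realized by members of $C$ has a least element $\lambda$. Pick any $c_0 \in C$ of length $\lambda$. The key lemma is that $c_0$ is the \emph{unique} element of $C$ of length $\lambda$, and is in fact $<_s$-below every other member of $C$. To see both at once, take any $c \in C$ with $c \ne c_0$ and let $z$ be the longest common initial segment of $c_0$ and $c$. If $z$ is a proper initial segment of $c_0$, then $c_0$ and $c$ both have a defined sign at position $|z|$ while $z$ does not, so using $- < \text{undefined} < +$ on the lexicographic order, $z$ lies strictly between $c_0$ and $c$. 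Convexity of $C$ then forces $z \in C$, contradicting the minimality of $\lambda = |c_0|$ since $|z| < |c_0|$. Hence $z = c_0$, i.e.\ $c_0 <_s c$. Thus $c_0$ is the $<_s$-simplest element of $C$.

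For (ii), the key preliminary is the set equality
\[
\{a \in \No : L_{s(x)} < \{a\} < R_{s(x)}\} \;=\; \{a \in \No : x \le_s a\},
\]
which I would verify in both directions. If $x \le_s a$, then every $b \in L_{s(x)}$ is a proper predecessor of $x$ with $b < x$; a short sign-sequence computation (the first position beyond $|b|$ where $x$ is defined must carry $+$, and the same position in $a$ must also carry $+$) gives $b < a$, and symmetrically for $R_{s(x)}$. Conversely, if $a$ is in the open interval but $a \not\ge_s x$, then either $a <_s x$ (in which case $a$ itself lies in $L_{s(x)} \cup R_{s(x)}$, a contradiction) or $a$ is $<_s$-incomparable with $x$; taking their common initial segment $z$, a sign-analysis at the first differing position shows $z \in L_{s(x)} \cup R_{s(x)}$ and is separated from $a$ on the wrong side, again a contradiction. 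Combined with the canonical representation $x = \{L_{s(x)}|R_{s(x)}\}$, this set equality says exactly that the elements sitting strictly between $L_{s(x)}$ and $R_{s(x)}$ are $x$ together with its $<_s$-extensions.

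With that in hand, assume $x = \{L|R\}$ in the sense of Conway, i.e.\ $x$ is the $<_s$-simplest element of the convex class $\{a : L < \{a\} < R\}$ (which exists by Proposition \ref{sur2}). Any $a$ in this interval is either equal to $x$ or, by simplicity of $x$, satisfies $x <_s a$; in either case $a$ lies in the interval $(L_{s(x)}, R_{s(x)})$ by the displayed equality, giving the required inclusion. Conversely, suppose $L < \{x\} < R$ and the displayed inclusion holds. Let $y$ be the (unique) simplest element of $\{a : L < \{a\} < R\}$, which exists by part (i). Then $y$ lies in $(L_{s(x)}, R_{s(x)})$, so $y = x$ or $x <_s y$; but $x$ itself lies in the interval $(L,R)$, so the simplicity of $y$ forces $y = x$, i.e.\ $x = \{L|R\}$.

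I expect the only delicate step to be the sign-sequence bookkeeping establishing the displayed set equality — especially handling the case where $x$ and $a$ are $<_s$-incomparable, where one must track which of $-$, undefined, $+$ appears at the first differing coordinate to place the common initial segment correctly in $L_{s(x)}$ or $R_{s(x)}$. Everything else reduces to the defining property of $\{L|R\}$ as the $<_s$-simplest separator and the well-ordering of ordinals.
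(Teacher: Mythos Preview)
The paper does not prove this proposition itself; it merely cites Theorems~1 and~2 of \cite{EH5}. Your argument is correct and is precisely the standard tree-theoretic proof that appears in that reference: part~(i) by taking an element of minimal length and using that the common initial segment of two sign sequences lies strictly between them in the lexicographic order, and part~(ii) via the key identity $\{a:L_{s(x)}<\{a\}<R_{s(x)}\}=\{a:x\le_s a\}$ combined with part~(i).

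One small gap worth patching in your write-up of~(i): when you suppose $z$ is a proper initial segment of $c_0$ and assert that both $c_0$ and $c$ have a defined sign at position $|z|$, this fails in the subcase $z=c$. That subcase is of course immediate (then $c<_s c_0$, so $|c|<\lambda$, contradicting minimality), but as written your ``$c_0$ and $c$ both have a defined sign at $|z|$'' covers only the case where $z$ is a proper initial segment of \emph{both}.
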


The non-zero elements of an ordered group can be partitioned into equivalence classes consisting of all members of the group that mutually satisfy the Archimedean condition. If $a$ and $b$ are members of distinct Archimedean classes and $\left |a \right |< \left |b \right |$, then we write $a \ll b$ and $a$ is said to be \emph{infinitesimal (in absolute value) relative to} $b$.

An element of  ${\bf No}$ is said to be a \emph{leader} if it is the simplest member of the positive elements of an Archimedean class of  ${\bf No}$. Since the class of positive elements of an Archimedean class of  ${\bf No}$ is convex, the concept of a leader is well defined. There is a unique mapping--\emph{the} $\omega$\emph{-map}--from ${\bf No}$ onto the ordered class of leaders that preserves both $<$ and $<_s$. The image of $y$ under the $\omega$-map is denoted $\omega^y$, and in virtue of its order preserving nature, we have: for all $x,y\in \bf {No}$, $$ \omega^x \ll \omega^y \;\text{if and only if} \; x<y.$$

\noindent Using the $\omega$-map along with other aspects of ${\bf No}$'s $s$-hierarchical structure and its structure as a vector space over $\RR$, every surreal number can be assigned a canonical ``proper name'' or \emph{normal form} that is a reflection of its characteristic
$s$--hierarchical properties. These normal forms are expressed as
sums of the form  $$\sum\limits_{\alpha  < \beta } {\omega ^{y_\alpha  } .r_\alpha} $$
where  $\beta $ is an ordinal,  $\left( {y_\alpha} \right)_{\alpha  < \beta } $ is a strictly decreasing
sequence of surreals, and  $\left( {r_\alpha  } \right)_{\alpha  < \beta } $ is a sequence of non-zero
real numbers. Every such expression is in fact the normal
form of some surreal number, the normal form of an ordinal being just its \emph{Cantor normal form} \cite[pages 31-33]{CO2}, \cite[\S3.1 and \S5]{EH5}, \cite{EH6}.

Making use of these normal forms, Figure ~\ref{fig:first} offers a glimpse of the some of the early stages of the recursive unfolding of  ${\bf No}$.

\begin{figure}[ht!]

\centering\includegraphics[width=0.95\textwidth]{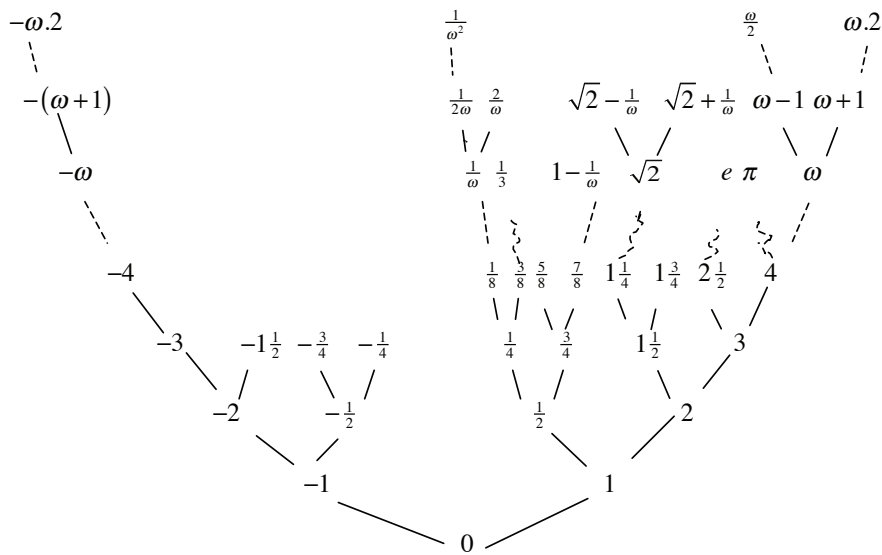}

\caption{Early stages of the recursive unfolding of  ${\bf No}$}
\label{fig:first} 
\end{figure}

When surreal numbers are represented by their normal forms, the sums, products and order on $\No$ assume the following more tractable termwise and lexicographical forms, 
where ``dummy" terms with zeros for coefficients are understood to be inserted and
deleted as needed. 

\begin{Proposition}[\cite{EH5}, Theorem 16; also see, \cite{GO}, pages 67-70 and \cite{AL}, pages 235-246]\label{sur4}

$$\sum\limits_{y\in \/\bf{No}}\omega ^{y}.a_{y}+\sum\limits_{y\in \/\bf{No}}\omega
^{y}.b_{y}=\sum\limits_{y\in \/\bf{No}}\omega ^{y}.\left( a_{y}+b_{y}\right),$$ 

$$\sum\limits_{y\in \/\bf{No}}\omega ^{y}.a_{y}\cdot\sum\limits_{y\in \/\bf{No}}\omega
^{y}.b_{y}=\sum\limits_{y\in \/\bf{No}}\omega ^{y}.\left[ \sum_{\stackrel{(\mu,\nu) \in \bf{No}\times \bf{No}}{{\mu}+{\nu} =y}} a_{\mu }b_{\nu }\right]
\,, $$

$$\quad \sum\limits_{y\in \/\bf{No}}\omega ^{y}.a_{y}<\sum\limits_{y\in
\bf{No}}\omega ^{y}.b_{y},{\it if}{\it \ }a_{y}=b_{y} \text{\it\ for all} {\it \ } y>
{\it some}{\it \ }x\in \bf{No} {\it \  and} {\it \ a_{x}<b_{x}}.$$

\end{Proposition}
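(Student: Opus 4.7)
The plan is to reduce the three assertions to the analogous identities for formal Hahn series, using three ingredients from the preceding material: the order characterization of the $\omega$-map, namely $\omega^x \ll \omega^y$ iff $x < y$; the multiplicativity $\omega^x \cdot \omega^y = \omega^{x+y}$, a standard consequence of Proposition \ref{sur3} and the genetic definitions of sum and product; and the uniqueness of the Conway normal form. Because normal forms are absolutely convergent in Conway's sense, every $\RR$-linear combination $\sum_{y \in \No} \omega^y a_y$ with reverse well-ordered support represents a unique surreal, and two such expressions represent the same surreal precisely when they agree coefficientwise.

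First I would establish the order statement. Assume $a_y = b_y$ for all $y > x$ and $a_x < b_x$, and write the formal difference as $B - A = \omega^x(b_x - a_x) + T$, where $T$ is an $\RR$-combination of terms $\omega^y$ with $y < x$. By the order characterization of the $\omega$-map, each such $\omega^y$ satisfies $n \omega^y \ll \omega^x$ for every positive integer $n$, so $T$ lies in the convex class $\{z \in \No : |z| < \omega^x / n \text{ for all } n \in \NN\}$. Hence $|T| < \omega^x(b_x - a_x)$ and $B - A > 0$.

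For addition, the formal expression $\sum_{y \in \No} \omega^y(a_y + b_y)$, with zero coefficients suppressed and the remaining exponents re-indexed in strictly decreasing order, is a legitimate normal form because the union of two reverse well-ordered subsets of $\No$ is reverse well-ordered. By uniqueness of normal form, it suffices to verify that this expression equals $A + B$ in $\No$, and this reduces by transfinite induction on the length of the shorter support to the single-exponent identity $\omega^y r + \omega^y s = \omega^y (r + s)$, which is immediate from the distributivity of $\No$ and the embedding of $\RR$ as a subfield. For multiplication, distribute formally and apply $\omega^\mu \omega^\nu = \omega^{\mu+\nu}$ to obtain $A B = \sum_{\mu, \nu} \omega^{\mu+\nu}(a_\mu b_\nu)$; collecting terms by $y = \mu + \nu$ yields the stated Cauchy product. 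The key verification is that this is a legitimate normal form: the support is contained in $\mathrm{supp}(A) + \mathrm{supp}(B)$, which is reverse well-ordered by the Neumann lemma applied to the ordered class $\No$, and each $y$ in this sumset admits only finitely many decompositions $y = \mu + \nu$ with $\mu \in \mathrm{supp}(A)$ and $\nu \in \mathrm{supp}(B)$. This makes every inner sum a finite real, and uniqueness of normal form closes the argument.

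The main obstacle is the combinatorial-set-theoretic bookkeeping certifying that the transfinite sums involved are well defined, and in particular the Neumann lemma in the class-sized setting of $\No$, which must be handled within NBG without appealing to choice beyond what is already used in establishing the normal form. Once that framework is in place, the three identities reduce to routine applications of distributivity, commutativity, and the ordering laws already established for $\No$.
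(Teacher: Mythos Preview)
The paper does not prove this proposition; it is stated with citations to \cite{EH5}, \cite{GO}, and \cite{AL} and no argument is given in the text. Your outline is essentially the standard proof one finds in those references: Neumann's lemma guarantees that the Cauchy product has reverse well-ordered support with finite fibres, the identity $\omega^\mu\omega^\nu=\omega^{\mu+\nu}$ together with distributivity handles multiplication, and the coefficientwise sum is justified by uniqueness of the normal form.

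One point of order: your argument for the inequality writes $B-A=\omega^x(b_x-a_x)+T$ and then bounds $T$, but this presupposes the addition formula you list second. Either reorder the argument (prove addition first, then derive the order statement from the sign of the leading term of $B-A$), or prove the order statement directly from the recursive construction of the normal form, which shows that any surreal whose leading coefficient is positive is itself positive. Also, the step ``each $\omega^y$ with $y<x$ is $\ll\omega^x$, hence $|T|<\omega^x(b_x-a_x)$'' needs a word: $T$ is a transfinite combination, not a finite one, so you must invoke the already-established fact that a normal form all of whose exponents are $<x$ is infinitesimal relative to $\omega^x$ (this is built into the Conway construction of normal forms and is what Proposition~\ref{surtrunk} is recording). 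With those two clarifications your sketch is sound.
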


The following result is an immediate consequence of Conway's definition of normal forms and (\cite[pages 32-33]{CO2} and \cite[page 1247]{EH5}) their lexicographical ordering.

\begin{Proposition}\label{surtrunk} {\rm For all $x\in \bf {No}$ and all positive $y\in \bf {No}$, $x = \left\{ {x-y|x+y} \right\}$, wherever all of the exponents in the normal form of $x$ are greater than all of the exponents in the normal form of $y$.

} \end{Proposition}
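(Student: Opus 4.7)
The plan is to verify the two clauses of Proposition \ref{sur3}(ii) with $L=\{x-y\}$ and $R=\{x+y\}$. The ordering condition $L<\{x\}<R$ is immediate from $y>0$, so the entire proof reduces to establishing the inclusion
\begin{equation*}
\{a\in\No : x-y<a<x+y\}\ \subseteq\ \{a\in\No : L_{s(x)}<\{a\}<R_{s(x)}\}.
\end{equation*}
This in turn will follow once one shows that every $b\in L_{s(x)}$ satisfies $b\leq x-y$ and every $b\in R_{s(x)}$ satisfies $b\geq x+y$; combining the two cases, the whole proposition reduces to the single claim that $|x-b|\geq y$ for every $b<_s x$ with $b\neq x$.

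To prove this claim I would bring in the normal form. Writing $x=\sum_{\alpha<\beta}\omega^{y_\alpha}r_\alpha$ in normal form and fixing $b<_s x$ with $b\neq x$, the standard description of how the simplicity tree is visible through normal forms (\cite[pages 32--33]{CO2}, \cite[page 1247]{EH5}) shows that the normal form of $b$ agrees with that of $x$ up through some index $\gamma<\beta$ and first differs either by truncation at the term boundary or by replacement of the coefficient $r_\gamma$ by a real $r'_\gamma<_s r_\gamma$. In either case the Archimedean class of $|x-b|$ is that of $\omega^{y_\gamma}$, so $|x-b|\geq c\,\omega^{y_\gamma}$ for some positive real $c$. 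By hypothesis every exponent in the normal form of $y$ is strictly less than every $y_\alpha$, so Proposition \ref{sur4} gives $y\ll \omega^{y_\gamma}$, whence $y<|x-b|$, which is the required inequality.

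The principal obstacle is making the appeal to ``the normal form of $b$ first differs from that of $x$ at the next position'' fully rigorous: the $<_s$-predecessors of $x$ are most directly described via initial segments of sign sequences rather than via truncations of the normal form, so one needs the dictionary between sign-sequence initial segments and normal-form truncations, together with the resulting Archimedean control on $|x-b|$. That content is exactly what is packaged in the cited passages of \cite{CO2} and \cite{EH5}; once it is imported, the proof is the short argument sketched above, and no additional analytic or set-theoretic machinery is needed beyond Propositions \ref{sur3} and \ref{sur4}.
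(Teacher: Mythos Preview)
Your approach is correct and matches the paper's: the paper simply says the result is an immediate consequence of Conway's definition of normal forms and their lexicographic ordering, citing the same sources you do. Your use of Proposition~\ref{sur3}(ii) to frame the argument, and your reduction to the single inequality $|x-b|>y$ for every proper simplicity predecessor $b$ of $x$, is exactly the right way to unpack that one-line reference.

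One point to tighten: your description of the normal forms of simplicity predecessors is too restrictive. It is not true that every $b<_s x$ has a normal form obtained from $x$'s by truncation at a term boundary or by replacing a single coefficient $r_\gamma$ with a simpler real. For instance, $b=\omega+\tfrac12$ is a proper $<_s$-predecessor of $x=\omega+\omega^{-1}$, yet its normal form arises in neither of those ways, and $|x-b|=\tfrac12-\omega^{-1}$ has Archimedean class $\omega^{0}$, which is not $\omega^{y_\gamma}$ for any exponent $y_\gamma\in\{1,-1\}$ of $x$. (When the exponents $y_\alpha$ are themselves nontrivial surreals, predecessors of $x$ can involve simplifying those exponents, producing still more shapes.) The statement that the cited passages actually deliver is weaker but sufficient: for every such $b$ there is an index $\gamma<\beta$ with the leading exponent of $|x-b|$ at least $y_\gamma$. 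Under the hypothesis that every exponent of $y$ lies strictly below every exponent of $x$, this still yields $y\ll|x-b|$ via Proposition~\ref{sur4}, so your proof goes through once the intermediate claim is stated in this form. You were right to flag precisely this dictionary as the principal obstacle.
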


Since every ordered field $A$ contains a unique isomorphic copy, $\mathbb{Q}_A$, of the ordered field of rational numbers, $a\in A$  may be said to be \emph{infinite} (\emph{infinitesimal}) if $\left | a \right |$  is greater than (less than) every positive member of $\mathbb{Q}_A$. Thus, in virtue of the lexicographical ordering on normal forms, a surreal number is infinite (infinitesimal) just in case the greatest (i.e. the zeroth) exponent in its normal is greater than (less than) $0$. As such, each surreal number $x$ has a canonical decomposition into its \emph{purely infinite part} $\Pi(x)$, its \emph{real part} re$(x)$, and its \emph{infinitesimal part} $\amalg(x)$, consisting of the portions of its normal form all of whose exponents are $>0$, $=0$, and $<0$, respectively. 

There is a notion of convergence in ${\bf No}$ for sequences and series of surreals that can be conveniently expressed using normal forms written as above with dummy terms.  Let $x\in\No$ and for each $y\in\No$, let $r_{y}(x)$ be the coefficient of $\omega^y$ in the normal form of $x$, it being understood that $r_{y}(x)=0$, if $\omega^y$ does not occur.  Also let
$\left \{ x_{n}: n\in \mathbb{N}\right \}$ be a sequence of surreals written in normal form. Following Siegel \cite[page 432]{SI}, we write \[x=\lim _{n\rightarrow \infty }x_{n}\]

\noindent to mean \[r_y\left ( x \right )=\lim _{n\rightarrow \infty }r_y \left ( x_{n} \right ),  \:   \text{for all}\ y\in\No,\]

\noindent and say that $x_n$ \emph{converges} to $x$. We also write \[x=\sum_{n=0}^{\infty }x_n\]

\noindent to mean the partial sums of the series converge to $x$.

Among the convergent sequences and series of surreals are those whose mode of convergence is quite distinctive. In particular, for each $y\in\bf {No}$, there is an $m\in\NN$ such that $r_{y}(x_n)=r_{y}(x_m)$ for all $n\geq m$. Thus, for each $y\in\bf {No}$,
 \[r_y\left ( x \right )=\lim _{n\rightarrow \infty }r_y \left ( x_{n} \right )=r_{y}(x_m),\]

\noindent where $m$ depends on $y$.
Following Conway, we call this mode of convergence \emph{absolute convergence}. Moreover, we will call the normal form to which an absolutely convergent series $x_n$ of normal forms converges the \emph{Limit} of the series and write  ${\rm Lim}_{n\to \infty}x_n$. We use ``Limit" as opposed to ``limit" to distinguish the surreal notion from its classical counterpart.

Relying on the above and classical combinatorial results of Neumann (\cite[pages 206-209]{N},\cite[Lemma 3.2]{SI}, \cite[pages 260-266]{AL}), one may prove \cite[pages 432-434]{SI} the following theorem of Conway \cite[page 40]{CO2}, which is a straightforward application to $\No$ of a classical result of Neumann \cite[page 210]{N}, \cite[page 267]{AL}.

\begin{Proposition}\label{sur5}{\rm Let $f$ be a formal power series with real coefficients, i.e. let \[f\left ( x \right )=\sum_{n=0}^{\infty }x^{n}r_{n}.\]

\noindent Then $f\left ( \zeta \right )$ is absolutely convergent for all infinitesimals $\zeta$ in $\No$. 
}
 \end{Proposition}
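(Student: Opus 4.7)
The plan is to reduce the proposition to Neumann's lemma on well-ordered subsets of ordered abelian groups, which is cited in the paragraph immediately preceding the statement. Write $\zeta$ in normal form $\zeta=\sum_{\alpha<\lambda}\omega^{y_\alpha}\cdot s_\alpha$ with $(y_\alpha)_{\alpha<\lambda}$ strictly decreasing and every $s_\alpha$ nonzero. Because $\zeta$ is infinitesimal, the lexicographic description of the order on normal forms (Proposition~\ref{sur4}) forces $y_0<0$, whence $y_\alpha<0$ for every $\alpha<\lambda$. Set $\Sigma:=\{y_\alpha:\alpha<\lambda\}$; then $-\Sigma$ is a well-ordered subset of the set of positive elements of the linearly ordered abelian group $(\No,+)$.

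Iterating Proposition~\ref{sur4}'s termwise product formula, every exponent occurring in the normal form of $\zeta^n$ lies in the $n$-fold sumset $n\Sigma:=\{y_{\alpha_1}+\cdots+y_{\alpha_n}:\alpha_i<\lambda\}$, and the coefficient of $\omega^y$ in $\zeta^n$ is, a~priori, the sum of $s_{\alpha_1}\cdots s_{\alpha_n}$ over all tuples $(\alpha_1,\dots,\alpha_n)$ with $y_{\alpha_1}+\cdots+y_{\alpha_n}=y$. Applying Neumann's lemma to $-\Sigma$ yields that the submonoid generated by $-\Sigma$ is well-ordered and that every element of this submonoid admits only finitely many representations as a sum of elements of $-\Sigma$. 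Negating, $\Sigma^{*}:=\bigcup_{n\in\NN}n\Sigma$ is reverse-well-ordered, each $y\in\Sigma^{*}$ has only finitely many representations as a sum of elements of $\Sigma$ (so the coefficient of $\omega^y$ in $\zeta^n$ is a bona fide finite sum), and, crucially, for each fixed $y$ this coefficient is nonzero for only finitely many $n$.

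Set $S_N:=\sum_{n=0}^{N}\zeta^n r_n$. By the previous step, for each $y\in\No$ there is $m(y)\in\NN$ (take $m(y)=0$ if $y\notin\Sigma^{*}$) with $r_y(S_N)=r_y(S_{m(y)})$ for all $N\ge m(y)$. Let $x$ be the formal series $\sum_{y\in\Sigma^{*}}\omega^y\cdot r_y(S_{m(y)})$. Its set of exponents is contained in $\Sigma^{*}$, which is reverse-well-ordered and is itself a set (a countable union of images of finite Cartesian powers of the set $\Sigma$). Hence $x$ is a genuine surreal number presented in normal form, and by construction $x=\mathrm{Lim}_{N\to\infty}S_N$ in the sense of absolute convergence introduced before the proposition. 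This is exactly the claim.

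The substantive content is absorbed by Neumann's lemma together with the termwise product rule of Proposition~\ref{sur4}, so I do not anticipate a real obstacle. The single point that merits attention is the role of the hypothesis that $\zeta$ be \emph{infinitesimal} (not merely non-infinite): it is precisely what makes every $y_\alpha$ strictly negative, and this negativity is what both enables Neumann's lemma to apply and guarantees the eventual stabilization in $N$ of each coefficient $r_y(S_N)$.
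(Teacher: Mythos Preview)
Your proposal is correct and follows exactly the approach the paper indicates: the paper does not spell out a proof but simply cites Neumann's classical combinatorial lemma (well-orderedness of the generated submonoid and finiteness of representations) and calls the proposition a straightforward application of it to $\No$. You have supplied precisely those details---reduction via the normal form and Proposition~\ref{sur4} to Neumann's lemma on $-\Sigma$, then stabilization of each coefficient $r_y(S_N)$---so your argument is the fleshed-out version of what the paper merely references.
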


Conway's theorem also has the following multivariate formulation \cite[page 435]{SI}.

\begin{Proposition}\label{sur6}{\rm  Let $f$ be a formal power series in $k$ variables with real coefficients, i.e. let \[f\left (x_1,...,x_k \right )\in\RR[[x_1,...,x_k]].\]
\noindent Then $f\left ( \zeta_1,..., \zeta_k \right )$ is absolutely convergent for every choice of infinitesimals $\zeta_1,..., \zeta_k$ in $\No$.} 
 \end{Proposition}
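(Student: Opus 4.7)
The plan is to reduce the multivariate case to the same combinatorial engine (Neumann's lemma for ordered abelian groups) that underlies the univariate Proposition \ref{sur5}. What needs to be verified is that, for every $y\in\No$, the coefficient of $\omega^y$ in the $N$-th partial sum of $\sum r_{n_1,\ldots,n_k}\zeta_1^{n_1}\cdots \zeta_k^{n_k}$ is eventually constant in $N$; this is exactly absolute convergence in the sense introduced just before Proposition \ref{sur5}.

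First I would fix normal forms $\zeta_i=\sum_{y\in S_i}\omega^y\cdot c_y^{(i)}$. Since each $\zeta_i$ is infinitesimal, lexicographic comparison (Proposition \ref{sur4}) forces $S_i\subseteq\{y\in\No:y<0\}$, and each $S_i$ is anti–well-ordered (every non-empty subclass has a maximum) by the fact that normal forms have strictly decreasing exponent sequences. Put $S:=S_1\cup\cdots\cup S_k$; as a finite union of anti–well-ordered subsets of the totally ordered class of negative surreals, $S$ is itself anti–well-ordered and is a set.

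Next I would expand $\zeta_1^{n_1}\cdots\zeta_k^{n_k}$ using the termwise product rule of Proposition \ref{sur4}: every $\omega$-exponent appearing in the expansion is of the form $s_1+\cdots+s_{|n|}$ with $|n|=n_1+\cdots+n_k$ and each $s_j\in S$. Hence
\[
\mathrm{supp}\bigl(\zeta_1^{n_1}\cdots\zeta_k^{n_k}\bigr)\subseteq n_1S_1+\cdots+n_kS_k\subseteq \bigcup_{N\in\NN} N\cdot S.
\]
Neumann's lemma \cite[page 210]{N}, \cite[page 267]{AL}, applied to the ordered abelian subgroup of $(\No,+)$ generated by $S$, now delivers two facts: (a) $\bigcup_N N\cdot S$ is anti–well-ordered, so finite partial sums of the series really do have normal-form-style supports; and (b) for each $y\in\No$, there are only finitely many pairs $(N,(s_1,\dots,s_N))$ with $s_j\in S$ and $s_1+\cdots+s_N=y$. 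In particular, only finitely many multi-indices $(n_1,\dots,n_k)$ can contribute to the coefficient of $\omega^y$, and within each such multi-index the contribution is a finite $\RR$-linear combination of real coefficients $c_{y_\alpha}^{(i)}$ by Proposition \ref{sur4}. This stabilization in $N$ is exactly the condition for absolute convergence in the sense of Conway, so $f(\zeta_1,\dots,\zeta_k)$ is defined as the Limit of its partial sums.

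The main obstacle, as in the univariate case, is not analytic but combinatorial: verifying that Neumann's finiteness-of-representations statement transfers verbatim from the classical ordered-group setting to the subgroup of $(\No,+)$ generated by the set $S$. Since $S$ is a genuine set of surreals (not a proper class), one is working inside an ordinary ordered abelian group and the classical argument applies without modification; the only care required is to organize the reindexing so that the contributions coming from different multi-indices and different choices of the $s_j\in S_i$ are grouped by the single variable $y$, which is precisely what Proposition \ref{sur4} permits.
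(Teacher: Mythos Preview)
Your proposal is correct and follows essentially the same route the paper points to: the paper does not give its own argument for Proposition~\ref{sur6} but simply cites \cite[page 435]{SI}, having already indicated (just before Proposition~\ref{sur5}) that both results are ``straightforward application[s] to $\No$ of a classical result of Neumann.'' Your reduction---taking the finite union $S=S_1\cup\cdots\cup S_k$ of the anti-well-ordered supports and invoking Neumann's lemma to get both anti-well-ordering of $\bigcup_N N\cdot S$ and finiteness of representations of each exponent---is precisely that classical argument, so there is nothing to add.
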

 
 This can also be written in the following useful form.
  
\begin{Proposition}\label{sur7}{\rm Let $\{c_{\mathbf{k}}: \mathbf{k}\in\NN^m\}$ be any multisequence  of real numbers and $h_1,...,h_m$ be  infinitesimals. Also let $\mathbf{h}^{\mathbf{k}}=h_1^{k_1}\cdots h_m^{k_m}.$ Then
  \begin{equation}
    \label{eq:sumc}
    \sum_{|\mathbf{k}|\ge 0} c_{\mathbf{k}} \mathbf{h}^{\mathbf{k}}
  \end{equation}
is a well-defined element of $\No$. 
}\end{Proposition}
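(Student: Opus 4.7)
The plan is to recognize that Proposition \ref{sur7} is essentially a notational reformulation of Proposition \ref{sur6}. A multisequence $\{c_{\mathbf{k}}:\mathbf{k}\in\NN^m\}$ of real numbers is, by definition, the same data as a formal power series
\[
f(x_1,\ldots,x_m)=\sum_{\mathbf{k}\in\NN^m} c_{\mathbf{k}}\,\mathbf{x}^{\mathbf{k}}\ \in\ \RR[[x_1,\ldots,x_m]],
\]
where $\mathbf{x}^{\mathbf{k}}=x_1^{k_1}\cdots x_m^{k_m}$. Under this identification, the sum $\sum_{|\mathbf{k}|\ge 0} c_{\mathbf{k}}\mathbf{h}^{\mathbf{k}}$ is literally the formal evaluation $f(h_1,\ldots,h_m)$.

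Given this, the first step is to verify that the indexing conventions agree: take $k=m$ in Proposition \ref{sur6} and choose $\zeta_i=h_i$ for $i=1,\ldots,m$, noting that each $h_i$ is infinitesimal by hypothesis. Proposition \ref{sur6} then immediately yields that $f(h_1,\ldots,h_m)$ is absolutely convergent in the sense of Conway, in the surreal sense made precise in the discussion following Proposition \ref{sur4}.

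The second step is to translate ``absolutely convergent'' into the assertion that \eqref{eq:sumc} is a well-defined element of $\No$. By the definition of absolute convergence, for each $y\in\No$ the coefficient $r_y$ of $\omega^y$ in the partial sums stabilizes after finitely many terms. This stabilized coefficient defines a normal form, whose $\mathrm{Limit}$ (in the capitalized sense introduced after Proposition \ref{sur4}) is by definition the element of $\No$ assigned to the series. This produces a canonical surreal number denoted by the expression in \eqref{eq:sumc}.

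There is really no substantive obstacle here; the only thing to be careful about is the translation of notation, in particular confirming that the monomial convention $\mathbf{h}^{\mathbf{k}}=h_1^{k_1}\cdots h_m^{k_m}$ used in the statement coincides with the standard monomial interpretation in $\RR[[x_1,\ldots,x_m]]$ used in Proposition \ref{sur6}. Once this bookkeeping is made explicit, Proposition \ref{sur7} follows as an immediate corollary, and it is best presented in exactly that way: a one-line deduction from Proposition \ref{sur6} together with the definition of $\mathrm{Limit}$.
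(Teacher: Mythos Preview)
Your proposal is correct and matches the paper's approach exactly: the paper introduces Proposition \ref{sur7} with the sentence ``This can also be written in the following useful form,'' treating it as nothing more than a notational restatement of Proposition \ref{sur6}. Your unpacking of the identification between multisequences and elements of $\RR[[x_1,\ldots,x_m]]$, together with the observation that absolute convergence yields a well-defined Limit in $\No$, is precisely the content implicit in that one-line remark.
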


The following result, in which $x_n$ and $y_n$ are absolutely convergent series of normal forms, collects together some elementary properties of absolute convergence in $\No$. Many are very similar to the properties of the usual limits. 

\begin{Proposition}\label{Pcop}{\rm 
Let ${\rm Lim}_{n\to \infty}x_n=x$ and ${\rm Lim}_{n\to \infty}y_n=y$, and further let $h\ll 1$, $\tau > 0$ and $A,B\in\No$. Then 
\begin{align}
  \label{eq:convprop}
(a)\ &   {\rm Lim}_{n\to \infty}(A x_n+B y_n)=A x+B y       ;\nonumber\\(b)\ & {\rm Lim}_{n\to \infty}x_ny_n=xy ;\nonumber\\ (c)\ & x\ne 0\Rightarrow {\rm Lim}_{n\to \infty} \frac{1}{x_n}=\frac{1}{x};\\ (d)\   & (\exists K)(\forall n)(|x_n|<K);\nonumber\\ (e)\ & {\rm Lim}_{n\to \infty}h^n=0 ;\nonumber\\ (f) \ & (\forall n)(|x_n|\le \tau)\Rightarrow |x|\le \tau.\nonumber
\end{align}
   } \end{Proposition}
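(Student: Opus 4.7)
The plan is to prove (a)--(f) by coefficient-by-coefficient analysis, combining the definition of absolute convergence (for each $y \in \No$, $r_y(x_n) = r_y(x)$ for all sufficiently large $n$) with the termwise sum and product formulas of Proposition \ref{sur4}. The common pattern for (a)--(c) is: fix a target exponent $z \in \No$; by the Neumann-type finiteness built into Proposition \ref{sur4}, the corresponding coefficient of the relevant expression is a finite sum of products of coefficients of $x_n$, $y_n$, $A$, $B$; since each such coefficient stabilizes eventually, the whole finite sum stabilizes to the desired value.

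For (a), I would apply Proposition \ref{sur4} directly: $r_z(A x_n + B y_n) = \sum_{\mu+\nu=z} r_\mu(A) r_\nu(x_n) + \sum_{\mu+\nu=z} r_\mu(B) r_\nu(y_n)$, a finite sum whose summands stabilize, giving $r_z(A x_n + B y_n) \to r_z(A x + B y)$. For (b), the product formula gives $r_z(x_n y_n) = \sum_{\mu+\nu=z} r_\mu(x_n) r_\nu(y_n)$, again a finite sum whose summands stabilize, so $r_z(x_n y_n) \to r_z(xy)$. For (c), I first observe that coefficient stabilization at the leading exponent $\alpha_0$ of $x$ forces $r_{\alpha_0}(x_n) = r_{\alpha_0}(x) \ne 0$ for all large $n$, so that $x_n \ne 0$ eventually and $u_n := (x_n - x)/x$ is infinitesimal. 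Writing $1/x_n = (1/x)\sum_{k \ge 0} (-u_n)^k$, Proposition \ref{sur5} ensures absolute convergence of the geometric series in $\No$; each coefficient of $1/x_n$ then equals a fixed finite polynomial expression (depending on $z$) in coefficients of $x_n$ that stabilize, yielding $r_z(1/x_n) \to r_z(1/x)$.

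For (d), boundedness follows from the observation that for large $n$ the leading exponent of $x_n$ equals the leading exponent $\alpha_0$ of $x$ (any exponent strictly above $\alpha_0$ that lies in the support of some $x_n$ must have coefficient eventually zero by stabilization), and the leading coefficient stabilizes to a finite real; combining with a uniform bound over the finitely many pre-stabilization indices yields a $K \in \No$ with $|x_n| < K$ for all $n$. For (e), $h \ll 1$ means the leading exponent of $h$ is strictly negative, so the leading exponent of $h^n$ is $n$ times it and descends to $-\infty$; for any fixed $z \in \No$, this leading exponent eventually falls below $z$, forcing $r_z(h^n) = 0$. For (f), $|x_n| \le \tau$ says $\tau - x_n \ge 0$ and $\tau + x_n \ge 0$ for all $n$; passing to the coefficient limits using (a), we deduce $\tau - x \ge 0$ and $\tau + x \ge 0$, i.e., $|x| \le \tau$.

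The main technical obstacle is in (b) and (c), where one must ensure that for each fixed $z$ only finitely many pairs $(\mu,\nu)$ with $\mu + \nu = z$ contribute to $r_z(x_n y_n)$ \emph{uniformly in} $n$: coefficient-wise stabilization alone does not obviously prevent the support of $x_n$ from acquiring new, arbitrarily large exponents as $n$ grows, which in principle could create new contributing pairs at each stage. This is handled by the standard Hahn-series/Neumann bookkeeping underlying Proposition \ref{sur4}, together with the fact that absolute convergence in $\No$ as used here constrains the supports of $x_n$ and $y_n$ to lie in a common reverse-well-ordered subset of $\No$ (as occurs in the natural examples coming from Propositions \ref{sur5}--\ref{sur7}); this reduces each $r_z$ computation to a finite combinatorial formula in the stabilized coefficients.
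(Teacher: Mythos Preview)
Your overall strategy for (a), (b), (c), (e) matches the paper's in spirit: the paper simply cites \cite[page 271]{AL} for (a) and (b), and for (c) it factors $x_n = r_y\omega^{y}(1+h_n)$ with $h_n$ infinitesimal and reduces to showing $1/(1+h_n)\to 1$ via the geometric series---essentially your argument with a slightly different normalization. Your treatment of (e) via the leading exponent of $h^n$ tending to $-\infty$ is also the natural one.

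Two points deserve comment. First, your argument for (d) is both more complicated than needed and not quite correct as written. You claim that for large $n$ the leading exponent of $x_n$ equals that of $x$; this fails for, e.g., $x_n=\omega^{n}$, which absolutely converges to $0$ (every fixed coefficient is eventually $0$) yet has leading exponent $n$. The paper's proof of (d) is one line: $\{x_n:n\in\NN\}$ is a \emph{set}, and no set is cofinal with $\No$, so some $K\in\No$ bounds every $|x_n|$. No convergence is even used.

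Second, for (f) the paper deduces the result from (e), not from (a). Your route---``$\tau\pm x_n\ge 0$, pass to coefficient limits''---hides exactly the uniform-support issue you correctly flag for (b): coefficient-wise stabilization alone does not obviously preserve nonnegativity, since the positive leading term of $\tau - x_n$ could sit at a different (and ever-larger) exponent for each $n$. In the paper's intended setting (as in \cite{AL}) the supports are controlled, which is what makes both (b) and (f) go through; your acknowledgement of this for (b) applies equally to your (f).
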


  \begin{proof}[Proof of Proposition \ref{Pcop}]
  
  (a) and (b) are proved in \cite[page 271]{AL}, (d) is evident since no set is cofinal with $\bf No$, (e) follows from Proposition \ref{sur2} and (f) follows from (e).
  For (c), since ${\rm Lim}_{n\to \infty}x_n\neq0$, there is a greatest $y\in\bf{No}$ such that $r_y(x_n)$ is not eventually zero. Thus, for sufficiently large $n$, $x_n=r_y\omega^{y}(1+h_n)$, where $h_n$ is infinitesimal, and, so, it suffices to establish the result for $x_n$ of the form $1/(1+h_n)$. Since $1/(1+h_n)-1=-h_n(1+h_n)^{-1}$ and ${\rm Lim}_{n\to \infty}h^n=0$ the coefficients of leaders in $h_n$ eventually vanish, and, as such, eventually vanish for $-h_n(1+h_n)^{-1}$.
  \end{proof}

$\mathbf{No}$ admits an inductively defined exponential function $\exp :\mathbf{No}\rightarrow \mathbf{No}$ together with a natural interpretation of real analytic functions restricted to the finite (i.e. non-infinite) surreals that makes it a model of the theory of real numbers endowed with the exponential function $e^x$ and all real analytic functions restricted to a compact box \cite{vdDE}. The exponential function, which was introduced by Kruskal, is developed in detail by Gonshor in \cite[Chapter 10]{GO}. Norton and Kruskal independently provided inductive definitions of the inverse function $\ln$, but thus far only an inductive definition of $\ln$ for surreals of the form $\omega^{y}$ has appeared in print; see \cite[page 161]{GO} and below. Nevertheless, since each positive surreal $x$, written in normal form, has a unique decomposition of the form $$x=\omega^{y}r(1+\epsilon),$$ where $\omega^{y}$ is a leader, $r$ is a positive member of $\RR$ and $\epsilon$ is an infinitesimal, $\ln(x)$ may be obtained for an arbitrary positive surreal $x$ from the equation $$\ln(x)=\ln(\omega^{y})+\ln(r)+\ln(1+\epsilon),$$
where  $\ln(\omega^{y})$ is inductively defined by 
$$\left \{ \ln \left ( \omega
^{y^{L}} \right )+n, \ln \left ( \omega ^{y^{R}} \right )-\omega ^{\left
( y^{^{R}}-y \right )/n}\Big| \ln \left ( \omega ^{y^{R}} \right )-n,
\ln \left ( \omega ^{y^{L}} \right )+\omega ^{\left (y- y^{^{L}} \right
)/n} \ \right\},$$

\noindent and

$$\ln(1+\epsilon)=\sum_{k=1}^{\infty } \frac{\left ( -1 \right )^{k-1} \epsilon^{k}}{k}.$$

\noindent Moreover, since $\ln$ is analytic, a genetic definition of $\ln(1+z)$ for infinitesimal values of the variable can be provided in the manner discussed in detail in the work of Fornasiero \cite[Pages 72-74]{FO}, and sketched below in Definition \ref{Def.6}.

Readers seeking additional background in the theory of surreal numbers may consult  \cite{AL}, \cite{CO2}, \cite{EH5}, \cite{EH7}, \cite{GO}, \cite{SCST} and \cite{SI}.

\vspace{36pt}

\section{Transseries.  \'Ecalle-Borel (EB)  transseriable functions. }\label{Sec5.3}
This section reviews some classical results in EB summability theory.  Except for the conventions and notation  it  can by skipped by readers familiar with these notions. For more details on transseries see \cite{Book,Edgar,CostinT,Joris,ADH}.

{\em Borel and EB summability.} EB summability applies to series of the form
\begin{equation}
  \label{eq:bsumbeta}
 \tilde{f}:=\sum_{k=-M}^\infty c_k x^{-(k+1)\beta},\ \Re\beta>0 .
\end{equation}
Since the sum from $-M$  to $ 0$  is finite we can assume without loss of generality that $M=0$. Typically, the definitions assume that $\beta=1$, and as we will see there is essentially  no  loss of generality in so doing. 
\subsection{Classical Borel summation of series}
\begin{Definition}\label{DefBE}
  {\rm The \emph{Borel sum} of a formal series $\tilde{f}$, denoted $f=\mathcal{LB}\tilde{f}$ (where $\mathcal{L}$ is the Laplace transform and $\mathcal{B}$ the Borel transform),  exists when steps (ii) and (iii) in the following process can be carried out. 

(i) Take the Borel transform $\tilde{F}=\mathcal{B}\tilde{f}$ of $\tilde{f}$. ($\tilde{F}$ is still a formal power series defined as the term-by-term inverse Laplace transform of $\tilde{f}$: $\mathcal{B}\sum_{k=0}^\infty c_{k} x^{-(k+1)\beta}=\sum_{k=0}^\infty c_k p^{-(k+1)\beta-1}/\Gamma((k+1)\beta)$.)

(ii) Assuming $\mathcal{B}\tilde{F}$ converges to $F$,  analytically continue $F$ on $\RR^+$ assuming this is  possible.

(iii) Take the Laplace transform, $f=\mathcal{L}F$, provided exponential bounds exist for $F$, say, if $\exists \nu>0\text{ such that}\; \sup_{x> \nu}|e^{-\nu x} F(x)|<\infty$.

For example, $$\mathcal{LB}\sum_{k=0}^\infty k!(-x)^{-k-1}=\mathcal{L}(1+p)^{-1}=-e^x\text{Ei}(-x).$$ For further details on Borel sums, see \cite{Book}.
}\end{Definition}
If $\beta=1$, then $\mathcal{B}\tilde{f}$ is analytic at $p=0$; otherwise it is ramified-analytic, $\mathcal{B}\tilde{f}=1/pA(p^\beta)$ where $A$ is analytic. Given this trivial transformation,  we will simply take $\beta=1$.

{\em Analyticity.} Standard complex analysis arguments (e.g. combining Morera's theorem with Fubini) show  that $f=\mathcal{LB}\tilde{f}$ is real analytic for large $x\in\RR\cap (\nu,\infty)$.
 \begin{Definition}\label{DGevrey}
  {\rm The power series
$$\sum_{k=-M}^{\infty}\frac{c_k}{x^{k+1}}$$
is Gevrey-one if there are $C,\rho >0$ such that for all $k$, $|c_k|\le k!C \rho^{-k}$. 
}\end{Definition}
\begin{Note}\label{SumLT}
  {\rm 
It is  known \cite[pages 104--109]{Book} that Borel summable {\em formal series} form a differential field, isomorphic to the field of Borel summed series. It is also known \cite[Theorem 2]{CK}\footnote{We emphasize that the proofs in \cite{CK} do not use the origin of the transseries.} that Borel summable functions, as well as EB-summable functions originating in generic linear or nonlinear systems of ODEs or difference equations, have the property that the difference between the function $f$ and its asymptotic series truncated to its least term $c_k x^{-k}$ ($k$ is dependent on $x$, and is roughly $k=\lfloor \rho x\rfloor$) 
satisfies
\begin{equation}
  \label{eq:SLT}
\left|f(x)-\sum_{k\in\ell(x)}\frac{c_k}{x^k}\right|   \le C e^{-|\rho x|}x^b
\end{equation}
where $C,\rho,b$ are  constants that can be estimated relatively easily in concrete examples. 

Inequality \eqref{eq:SLT}, when the constants are sharp,   is  the summation to the least term estimate; see Note \ref{Nlts}. Summation to the least term,  introduced by Cauchy and Stokes, was further developed by Berry \cite{Berry1}, and later extended by Berry, Delabaere, Howls,  Olde Daalhuis and others; for references, see \cite{Olde1}. Here, however, we only need the classical notion.}
\end{Note} 
\subsection{Transseries} Somewhat informally,  transseries were already used in the late 19th century since they arise naturally in the study of differential equations.  Let's start with the simplest nontrivial differential equations in a neighborhood of $x=0$:
  \begin{equation}\label{threetypes}
    a_i(x)y''+(x^2-x)y(x)+y(x)=0;\ \ a_1(x)=1, \ a_2(x)=x^2,\
 a_3(x)=x^3.
  \end{equation}
To study the properties of \eqref{threetypes} it is useful to first divide the equation by $a_i$. When $i=1$, the resulting equation has analytic coefficients, and by the general theory of ODEs, it has an  analytic fundamental system of solutions at zero (which are entire since the equation has no singularities in $\CC$). In particular, it can be solved by a convergent power series of the form,
\begin{equation}
  \label{eq:sl1}
  y=A\(1-\tfrac12 z^2-\tfrac1{24}z^4+\cdots\)+B\(z-\tfrac1{12}z^4-\tfrac1{120}z^6+\cdots\).
\end{equation}
When $i=2$, $z=0$  is a point where the coefficients are meromorphic, but not analytic. The order of the pole is sufficiently low  and the singular point is {\em regular} in the sense of Frobenius. This being the case, there still exists a fundamental set of solutions as convergent  series, though the powers are not necessarily integer anymore and, non-generically, logs may get mixed in. In our example, we obtain
\begin{equation}
  \label{eq:case2}
   y=A\sqrt{z}\(1-\tfrac12 z-\tfrac1{8}z^2+\cdots\)+B\(z-\tfrac1{12}z^4-\tfrac1{120}z^6+\cdots\).
\end{equation}
The situation changes abruptly when the order of the poles exceeds the order of the equation, that is, when $i=3$. The singular point becomes {\em irregular}. Now the space of formal power series solutions is just {\em one-dimensional}, that is
  \begin{equation}
    \label{case3}
    y=A\sum_{k=0}^{\infty}k!z^{k+1}
  \end{equation}
whereas a second order ODE must have a two-dimensional space of solutions; furthermore,  series \eqref{case3}  is divergent. The second family of solutions is {\em not} a power series at zero but rather
 \begin{equation}
    \label{case3b}
    y=Be^{-1/z}.
  \end{equation}
The general solution is thus
\begin{equation}
    \label{case31}
    y=A\sum_{k=0}^{\infty}k!z^{k+1}+Be^{-1/z}.
  \end{equation}
However, since the power series in \eqref{case3}  has  radius of convergence zero, $y$ in \eqref{case31}  --perhaps the  simplest nontrivial transseries at zero-- is now only a {\em formal solution}.  

Imagine now that we have a singular nonlinear ODE. One of the simplest irregularly singular  nonlinear ODE is obtained by the change of dependent variable  $h(z)=1/y(z)$ in \eqref{threetypes}. Clearly, the general formal solution for the  just-said equation is
\begin{equation}
  \label{eq:nonl1}
  \frac{D}{\sum_{k=0}^{\infty}k!z^{k+1}+Ce^{-1/z}}=:\frac{D}{y_1+C_2 e^{-1/z}}.
\end{equation}
If $z>0$, then   $ e^{-1/z}\ll z$, and hence $e^{-1/z}\ll y_1$. We let $y_2=y_1(z)/z$ where now $y_2=1+O(z)$, and further let $\tilde{y}=1/y_2$, which can be re-expanded as a power series at zero;   moreover, we can further expand, simply using the geometric series,
\begin{multline}
  \label{eq:nonl1}
  \frac{D}{y_1+Ce^{-1/z}}=:\frac{D\tilde{y}}{z(1+C_2 e^{-1/z}\tilde{y})}\\=\frac{D}{z}\left(\tilde{y}-C_2\tilde{y}^2e^{-1/z}+C_2^2\tilde{y}^3e^{-2/z}+\cdots\right)=
z^{-1}\sum_{k=0}^{\infty}\tilde{y}_j C^j e^{-j/z}
\end{multline}
where $\tilde{y}_j$ are formal power series. This is a more general {\em level one transseries at $0^+$}.  
\begin{Definition}[Informal definition]
  A transseries with generators $\mu_1,\mu_2,...,\mu_n$ considered as ``variables $\ll 1$'' is a formal sum 
  \begin{equation}
    \sum_{k_1,k_2,...,k_n>-M}c_{k_1,k_2,...,k_n}\mu_1^{k_1}\mu_2^{k_2}\cdots \mu_n^{k_n}=:\sum_{\bfk>-M}c_{\bfk}\boldsymbol{\mu}^{\bfk}.
  \end{equation}
\end{Definition}
\begin{Note}
  {\rm It is customary to work with the case when the variable tends to $\infty$ rather than to 0. This is achieved simply by taking $x=1/z$. One reason for adopting this convention is that equations arising in practice most often have their worst singularities at $\infty$. Other reasons relate to  algebraic simplicity. For instance, repeated differentiation of $e^{-1/z}$ obviously leads  to more complicated expressions than the repeated differentiation of $e^{-x}$.}
\end{Note}
\z {\bf Example.}: Note that \eqref{eq:nonl1} is a transseries with generators $\mu_1=z$ and $\mu_2=e^{-1/z}$. 
Also note that \eqref{trans2} has generators $\mu_1=1/x,...,\mu_j=x^{\beta_j}e^{-\lambda_j x}$.

The survey paper \cite{Edgar} is an excellent self-contained introduction to transseries and their topology, with connections to normal forms of surreal numbers, as well as ample references.
\begin{Definition}\label{defconv}{\rm 
  The \emph{transseries topology} (see \cite{Book,Edgar}) is defined by the following convergence notion. Let  $\sum_{\bfk>-M}c_{\bfk}^{[m]}\boldsymbol{\mu}^{\bfk}$ be a sequence of transseries, where the superscript $[m]$ designates the $m$th element of the sequence and $c_{\bfk}^{[m]}$ designates the sequences of coefficients of the $m$th element. Then,  
  \begin{equation}
    \label{eq:defc}
    \lim_{m\to\infty} \sum_{\bfk>-M}c_{\bfk}^{[m]}\boldsymbol{\mu}^{\bfk}= \sum_{\bfk>-M}c_{\bfk}\boldsymbol{\mu}^{\bfk} \text{ if and only if } \forall\bfk\exists M \text{ such that }\forall m>M, c_{\bfk}^{[m]}=c_{\bfk},
  \end{equation}
i.e., {\em if and only if all the coefficients eventually become  those of the limit transseries (rather than merely converge to them).}
}\end{Definition}
  \begin{Definition} [Borel summable transseries]\label{DEBstr}
{\rm 
 Rewriting \eqref{trans2} as 
\begin{equation}\label{trans21}
  \tilde{T}=  \sum_{\bfk\ge \bfk_0,l\in\NN}c_{\bfk,l}x^{\boldsymbol{\beta}\cdot \bfk}e^{-\bfk \cdot \boldsymbol{\lambda} x}x^{-l}= \sum_{\bfk\ge \bfk_0}x^{\boldsymbol{\beta}\cdot \bfk+1}e^{-\bfk \cdot \boldsymbol{\lambda} x}\tilde{y}_{\bfk}(x)
  \end{equation}
where $\tilde{y}_{\bfk}(x)=\sum_{l=0}^\infty c_{\bfk,l}x^{-l-1}$ are formal power series, we say that the transseries $\tilde{T}$ is \emph{Borel-summable} if there is a $\bfk$-independent exponential bound (the same constant  $\nu$ in Definition \ref{DefBE} for all $\bfk$)  such that all the $\tilde{y}_{\bfk}(x)$ satisfy  Definition \ref{DefBE} and there are $A,B, r>0$ such that for all $\Re x>r$ we have $\mathcal{LB} \tilde{y}_{\bfk}(x)\le A B^k$. 
When the conditions above are met,  $\tilde{T}$ is called a  \emph{Borel summable transseries}, and $\mathcal{LB}\tilde{T}$ is its Borel sum.
}\end{Definition}
 These estimates imply
 \begin{Lemma}\label{LEBstr}{\rm 
 \begin{equation}
  \label{eq:BsumT}
  T:=\mathcal{LB}\tilde{T}=\sum_{\bfk\ge \bfk_0}x^{\boldsymbol{\beta}\cdot \bfk+1}e^{-\bfk \cdot \boldsymbol{\lambda} x}\mathcal{LB}\tilde{y}_{\bfk}=\sum_{\bfk\ge \bfk_0}x^{\boldsymbol{\beta}\cdot \bfk+1}e^{-\bfk \cdot \boldsymbol{\lambda} x}y_{\bfk} 
\end{equation}
is a  function series converging to an  analytic function for   $|x^{\boldsymbol{\beta}\cdot \bfk}e^{-\bfk \cdot \boldsymbol{\lambda} x}|<1/B$.
 }\end{Lemma}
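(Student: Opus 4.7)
The plan is to reduce the lemma to (i) a uniform-in-$\bfk$ analyticity and boundedness statement for the inner Borel sums $y_{\bfk}=\mathcal{LB}\tilde y_{\bfk}$, and (ii) a multi-variable geometric majorization of the outer series in \eqref{eq:BsumT}.

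For (i), I would use the fact that Definition \ref{DEBstr} requires the exponential bound $\nu$ in the Laplace step of Definition \ref{DefBE} to be independent of $\bfk$. Consequently the Laplace integrals defining $y_{\bfk}(x)$ all converge on the common right half-plane $\Re x>r$, and the Morera--Fubini analyticity argument already cited just after Definition \ref{DefBE} applies verbatim and uniformly in $\bfk$, making every $y_{\bfk}$ analytic on that common domain. For (ii), I would combine the uniform growth estimate $|y_{\bfk}(x)|\le A B^{|\bfk|}$ on $\Re x>r$ with the structural hypothesis that $k_j\lambda_j$ appears with $k_j$ ranging to infinity only when $\lambda_j>0$. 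Together these give the termwise majorization
$$\bigl|x^{\boldsymbol{\beta}\cdot\bfk+1}e^{-\bfk\cdot\boldsymbol{\lambda} x}y_{\bfk}(x)\bigr|\le A|x|\,B^{|\bfk|}\,|x^{\boldsymbol{\beta}\cdot\bfk}e^{-\bfk\cdot\boldsymbol{\lambda} x}|.$$
Writing $z_j:=x^{\beta_j}e^{-\lambda_j x}$ for those indices where the sum ranges to infinity, the outer sum is then dominated by a multi-variable geometric series in the quantities $B z_j$, which is absolutely summable precisely when $|x^{\boldsymbol{\beta}\cdot\bfk}e^{-\bfk\cdot\boldsymbol{\lambda} x}|<1/B$ in the stated sense.

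Analyticity of the limit function would then follow from the Weierstrass M-test together with Weierstrass's theorem on uniform limits of analytic functions: every finite partial sum is analytic as a finite sum of the analytic $y_{\bfk}$ multiplied by entire exponential and power factors, and the geometric majorant supplies the required locally uniform bound on compact subsets of the convergence region. The main step to watch is promoting the pointwise majorization to a locally uniform one so that convergence is \emph{normal} rather than merely pointwise; this is immediate because the constants $A$, $B$, $r$, and $\nu$ supplied by Definition \ref{DEBstr} are $\bfk$-independent. The main obstacle I anticipate is bookkeeping: the bound $AB^k$ in the definition must be parsed multi-indexwise (as $AB^{|\bfk|}$) for the outer sum to constitute a genuine multi-geometric series, and the finitely many directions with $\lambda_j=0$ (where $k_j$ is bounded by hypothesis) contribute only a polynomial prefactor and must be absorbed separately. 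Once this accounting is done, the estimates combine straightforwardly and give the asserted convergence and analyticity.
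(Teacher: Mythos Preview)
Your proposal is correct and is precisely the argument the paper intends: the text simply says ``These estimates imply'' the lemma, referring to the $\bfk$-independent Laplace bound and the uniform estimate $|y_{\bfk}|\le AB^{|\bfk|}$ from Definition~\ref{DEBstr}, and your Morera--Fubini plus Weierstrass M-test/uniform-limit argument is exactly the standard unpacking of that implication.
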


\subsection{EB summation of series}
Even in simple cases classical Borel summation fails because of singularities. For instance $\mathcal{LB}\sum_{k=0}^\infty k!x^{-k-1}=\mathcal{L}(1-p)^{-1}$, which is undefined as presented.

\'Ecalle introduced   significant improvements over Borel summation. Among them are the concepts of \emph{critical times} and \emph{acceleration/deceleration}  to deal with mixed powers of the factorial divergence. Last but not least, and the only additional ingredient we will need, is that of \emph{averaging}.

In linear problems, to avoid the singularities of the integrand  on $\RR^+$ (when present),  one can take the half-half average of the Laplace transforms above and below $\RR^+$. In nonlinear problems, on the other hand, the average  of two solutions is not a solution. Though \'Ecalle found constructive, universal  and simple looking averages, notably the Catalan averages,  which successfully replace the naive half-half averages mentioned above, it is altogether nontrivial to construct them and show that they work \cite{Menous}. Invoking Borel transforms followed by analytic continuation along paths avoiding singularities, followed by taking the Catalan averages of these continuations and finally applying the Laplace transforms yields a differential field algebra \cite{Menous}.

\'Ecalle introduced  other important improvements in asymptotics, such as general transseries, extending EB summation to general transseries, and \'Ecalle cohesive continuation, which allows for a good continuation through  ``natural natural'' boundaries. 
\begin{Proposition}\label{consist}{\rm 
Borel summation and more generally   EB summation (see Definition \ref{DEB}) is a differential algebra isomorphism between level one transseries arising in  nonlinear systems of ODEs under the assumptions of \cite{IMRN} and actual functions in $\CC$ or $\RR$. 
}\end{Proposition}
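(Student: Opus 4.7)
The plan is to verify the three properties that together constitute a differential algebra isomorphism: (i) $\mathcal{LB}$ is well-defined on the space of Borel-summable level-one transseries arising in the ODE setting of \cite{IMRN}; (ii) it is a homomorphism for sum, product, and $d/dx$; and (iii) it is injective (surjectivity onto its image is definitional). Each property reduces to an already-published ingredient; the work is to assemble these consistently.

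First I would establish well-definedness. Given $\tilde T$ as in \eqref{trans21}, Definition \ref{DEBstr} requires uniform $\bfk$-independent exponential bounds and summability of each $\tilde y_{\bfk}$. Under the assumptions of \cite{IMRN}, the Borel transforms $\mathcal{B}\tilde y_{\bfk}$ converge in a common disk, extend analytically along $\RR^+$ avoiding a discrete set of Stokes singularities, and admit uniform exponential bounds on the Catalan-averaged continuations; see \cite{Menous}. This lets $\mathcal{L}$ apply term-by-term, and Lemma \ref{LEBstr} then guarantees convergence of the outer sum in \eqref{eq:BsumT} to an analytic function on the appropriate Stokes sector.

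Next I would check the algebraic structure. Additivity is immediate from linearity of $\mathcal{B}$ and $\mathcal{L}$. For the product, on the Borel side $\mathcal{B}$ carries ordinary multiplication of transseries to convolution; uniform exponential bounds on the Catalan-averaged continuations allow Fubini to interchange convolution with Laplace, so that $\mathcal{L}(F\ast G) = (\mathcal{L}F)(\mathcal{L}G)$. The key point--handled in \cite{Menous}--is that the Catalan average is a \emph{multiplicative} medianization (a convex combination of continuations whose weights satisfy the cocycle identity forcing compatibility with convolution); this is what converts the naive half-half average, which fails in the nonlinear case, into an algebra homomorphism. Commutation with $d/dx$ follows because $\mathcal{B}$ intertwines $d/dx$ on the $x$-side with multiplication by $-p$ on the Borel side, and this commutes with both analytic continuation and Catalan averaging.

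Finally I would prove injectivity. Suppose $\mathcal{LB}\tilde T = 0$. Expanding in the exponential generators $e^{-\bfk\cdot\boldsymbol\lambda x}$, the terms with distinct $\bfk$ have distinct exponential orders in the relevant Stokes sector, so Gevrey-one asymptotic uniqueness (Watson-type, adapted via critical time to the level-one setting) forces each $y_{\bfk}=0$, hence each $\tilde y_{\bfk}=0$ by the classical injectivity of $\mathcal{LB}$ on Gevrey-one series, and therefore $\tilde T = 0$. The main obstacle is the algebra homomorphism property for the product: one must verify that the Catalan average commutes with convolution of Borel transforms along the full family of continuation paths arising from \cite{IMRN}. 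This is precisely the content of Menous' theorem \cite{Menous}, which we invoke; aside from this, the remaining steps are standard manipulations of Laplace and Borel transforms under the uniform exponential bounds guaranteed by the ODE assumptions.
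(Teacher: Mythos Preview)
Your proposal is essentially correct and shares the paper's core reliance on Menous \cite{Menous} for the crucial fact that the Catalan average commutes with convolution and preserves the exponential bounds needed for Laplace transforms, which is what gives multiplicativity of $\mathcal{LB}$.

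The paper, however, takes a shortcut that you do not. Rather than verifying injectivity and the transseries-level homomorphism properties from scratch, the paper observes that the Catalan average actually \emph{coincides} with the balanced average used in the ODE analysis of \cite{Duke}: both have first weight pair $\tfrac12,\tfrac12$, and for balanced averages this first pair uniquely determines all subsequent weights. With this identification in hand, the paper can simply invoke \cite{Duke}, where the full differential-algebra isomorphism between balanced-Borel summable transseries (under assumptions more general than \cite{IMRN}) and their sums is already established. Your direct verification via Watson-type uniqueness and term-by-term exponential separation is a legitimate alternative, but it reproves what \cite{Duke} already contains; the paper's Catalan~$=$~balanced observation is the economical bridge that lets one cite that result wholesale.
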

\begin{proof}[Proof of Proposition \ref{consist}]
	
	The Catalan average, \cite[\S IV 2, page 604]{Menous}, an average in the Borel plane (the space obtained after taking the Borel transform)  commutes with convolution and preserves lateral growth (the exponential bounds needed to take Laplace transforms),   entailing the commutation of EB summation of  a formal series with {\bf multiplication}. The isomorphism between EB summable series and functions thus follows directly from \cite{Menous}. For transseries, we note that the Catalan average and the balanced average used in ODEs (cf. \cite{Duke}) coincide. Indeed, the first pair of Catalan weights is simply $\frac12,\frac12$, the same as those of balanced averages, and the first pair  $\frac12,\frac12$ determines  all the other weights of a balanced average (cf. \cite{Duke}). The isomorphism between  balanced-Borel summable transseries and their sums is proved in \cite{Duke}, under assumptions more general than those in \cite{IMRN}.

\end{proof}

\begin{Definition}\label{DEB}{\rm 
  By  abuse of language, \emph{a Borel summable function} is understood to be a function that is equal to the Borel sum of its own asymptotic series. Following \'Ecalle, functions that are EB sums of their own asymptotic series or transseries are called \emph{analyzable}. Borel summable functions are in particular analyzable.
}\end{Definition}

\subsection{General conditions for level-one EB summable transseries in ODEs}\label{SDuke}
To apply the same procedure for general solutions of nonlinear ODEs at irregular singular points, we rely on the transseries obtained in \cite{IMRN,Duke}.

\z As is well known,  by relatively simple algebraic transformations, a higher order
differential equation can be turned into  a first order vectorial
equation (differential system) and vice versa \cite{CL}. The vectorial
form has some technical advantages.

Furthermore, as discussed in \cite{Book}, the equations of classical functions, Painlev\'e equations and others are amenable to the form:                
\begin{eqnarray}
 \label{eqor10}
  \bfy'=\mathbf{f}(x,\bfy)  \qquad \bfy\in\CC^n              
   \end{eqnarray}

\z under the following {\em assumptions:}

(a1) The function $\mathbf{f}$ is analytic
at $(\infty,0)$;

(a2) A condition of nonresonance holds: the numbers
 $\arg\, \lambda_j,\,\,j=1,...,n$ are {\em distinct}, where $\lambda_j$, all  of which are nonzero,  are
the eigenvalues of the linearization

\begin{eqnarray}
  \label{linearized}
  \hat{\Lambda}:=-\left(\frac{\partial f_i}{\partial
    y_j}(\infty,0)\right)_{i,j=1,2,\ldots n}.
\end{eqnarray}

We impose, in fact,  a stronger condition, namely the condition in the paragraph containing \eqref{eq:sngset} below.
Writing out explicitly a few terms in the expansion of $\bf f$,
relevant to leading order asymptotics, we get

\begin{eqnarray}\label{eqor}
{\bf y}'={\bf f}_0(x)-\hat\Lambda {\bf y}+
\frac{1}{x}\hat A {\bf y}+{\bf g}(x,{\bf y})
\end{eqnarray} 
where $\bf g$ is analytic at $(\infty,\mathbf 0)$ and ${\bf g}(x,{\bf y})=
O(x^{-2}, |\mathbf y|^2, x^{-2}\mathbf y)$.

\z {\em Nonresonance}.
For any $\theta>0$, denote by $\mathbb{H}_\theta$ the open half-plane centered on
$e^{i\theta}$.  Consider the eigenvalues contained in  $\mathbb{H}_\theta$,
written as a vector $\tilde{\boldsymbol{\lambda}}=(\lambda_{i_1},...,\lambda_{i_{n_1}})$; 
for these $\lambda$s we have $\arg\lambda_{i_j}-\theta\in (-\pi/2,\pi/2)$.

We require that for all $\theta$, the numbers in the finite set
\begin{equation}
  \label{eq:sngset}
\{N_{j,\bf  k}={\lambda}_{j}-\mathbf{k}
\cdot{\tilde{\boldsymbol{\lambda}}}: N_{j,\bf  k}\in \mathbb{H}_\theta,\mathbf k\in\NN^{n_1},j=i_1,...,i_{n_1}\}
\end{equation}
have distinct complex arguments.

 Let $d_{j,\bf  k}$ be the direction
of $N_{j,\bf  k}$, that is the set $\{z:\arg z=\arg(N_{j,\bf  k})\}$. We note that 
the opposite directions, $\overline d_{j,\bf  k}$ are Stokes rays,\footnote{They are sometimes called Stokes lines, and often, in older literature, antistokes lines.} rays along which the Borel transforms are singular.
 \index{Stokes ray}

It can be easily seen that the set of $\boldsymbol{\lambda}$ which satisfy
(\ref{eq:sngset}) has full measure; for detailed definitions and explanations see  \cite{Duke}.

\subsection{Proofs of Theorem \ref{Ad-hoco} and \ref{Ad-hoc1}}\label{Smr} 
In the sequel, to conveniently work in vector spaces, we extend those functions $f$ defined only on proper subintervals of $\RR$ to all of $\RR$ by setting $f(x)=0$ for $x$ outside the subintervals in question. If $f$ and $g$ coincide on some interval $I$, we show that $A(f)=A(g)$ on $I$. The results for restricted-domain functions follow straightforwardly from the $0$-extension ones.

  \begin{Lemma}\label{allantider}{\rm 
    There is a linear antiderivative on $T[\RR,\RR]$ with the further property $(A_T (f))'(x_{\RR})=f(x_{\RR})$ for all $x_{\RR}\in \RR$.
  }\end{Lemma}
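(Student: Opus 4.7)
The plan is to produce $A_T$ by an explicit formula based on the real-part map $\re : \No \to \RR$, with no need for a Hamel basis or the axiom of choice. The starting observation is that in Definition~\ref{DefDeriv}, the parameter $\delta$ ranges over \emph{all} positive surreals, and in particular may be taken to be infinitesimal. A surreal neighborhood $\{y \in \No : 0 < |y - x_\RR| < \delta\}$ of $x_\RR \in \RR$ with $\delta$ infinitesimal contains no real number other than $x_\RR$ itself, since every real $y \ne x_\RR$ sits at a non-infinitesimal distance from $x_\RR$. Consequently the surreal derivative of $A_T(f)$ at $x_\RR$ is controlled entirely by the values of $A_T(f)$ on the non-real surreals in the monad of $x_\RR$, and we have complete freedom in how we set $A_T(f)|_\RR$.

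Exploiting this, I would define, for every $f \in T[\RR,\RR]$,
\[ A_T(f)(y) \ := \ f\bigl(\re(y)\bigr)\,\bigl(y - \re(y)\bigr) \qquad (y \in \No). \]
Linearity of $A_T$ in $f$ is immediate, since for each fixed $y$ the map $f \mapsto f(\re(y))$ is a linear functional that is then scaled by the surreal $y - \re(y)$. To verify the derivative condition at $x_\RR \in \RR$, one notes that $A_T(f)(x_\RR) = f(x_\RR)\cdot 0 = 0$, and for any $y \in \No$ with $0 < |y - x_\RR|$ infinitesimal we have $\re(y) = x_\RR$, whence
\[ A_T(f)(y) - A_T(f)(x_\RR) \ = \ f(x_\RR)(y - x_\RR). \]
The difference quotient is thus identically $f(x_\RR)$ on the punctured monad of $x_\RR$; taking $\delta$ to be any positive infinitesimal witnesses Definition~\ref{DefDeriv} for every $\epsilon > 0 \in \No$, establishing $(A_T(f))'(x_\RR) = f(x_\RR)$.

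The obstacle one might initially anticipate is that not every $f : \RR \to \RR$ is the classical derivative of any real-valued function (Darboux's theorem), so a classical antiderivative of $f$ cannot exist for arbitrary $f$. This apparent obstacle dissolves because the classical difference quotient along $\RR$ never enters the verification once $\delta$ is taken small enough: the surreal order is rich enough to carry all of the derivative information on the monads, while leaving $A_T(f)|_\RR$ completely free (in the construction above it is identically zero). This is why no form of choice is required for the lemma itself; Hamel-basis arguments will be needed only later, when one additionally wants $A_T$ to agree with the natural antiderivative on all monomials or on $\exp$ (conditions (iii)--(iv) of Definition~\ref{Dd2}).
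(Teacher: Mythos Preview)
Your proposal is correct and takes essentially the same approach as the paper. Both constructions set $A_T(f)(x)=f(x_{\RR})(x-x_{\RR})$ with $x_{\RR}$ the real part of $x$; the paper obtains $x_{\RR}$ as the standard part via $\inf/\sup$ over the reals on a bounded interval and declares $A_T f(x)=0$ for infinite $x$, whereas you use the normal-form map $\re$ on all of $\No$, a distinction that is immaterial since the derivative is only tested at real points.
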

  \begin{proof}
    Let $I\subset \RR$ be a bounded  interval and $I^*$ be the corresponding interval in $V$.  Also let $x\in I^*$. Then there is a unique $x_{\RR}\in I$ such that $x-x_{\RR}$ is infinitesimal. Moreover,   $x_{\RR}=\inf\{y\in I:y>x\}=\sup\{y\in I:y<x\}$. We define $A_T(f)(x)=f(x_{\RR})(x-x_{\RR})$ for all infinitesimal $x-x_{\RR}$. Then $A_T$ is linear, and $(A_T(f))'=f^*$. For $|x|>\infty$ define $f^*(x)=0$ and $A_T f(x)=0$.
  \end{proof}
Note that the following proofs of Lemma \ref{Lemgen} and Theorem \ref{Ad-hoco} do not require the axiom of choice.
\begin{Lemma}\label{Lemgen}{\rm 
  Let $W$ be a subspace of $T[\RR,\RR]$, $W_1$ be a subspace on which there is a linear antiderivative $A_1$ and suppose $P$ is a projector on $W_1$. Then there exists an antiderivative $A$ on $T[\RR,\RR]$ that extends $A_1$. 
}\end{Lemma}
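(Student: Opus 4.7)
My plan is to reduce the problem to a direct construction using the projector $P$ together with the ``trivial'' antiderivative $A_T$ provided by Lemma \ref{allantider}. The key observation is that since $P$ is a projector onto $W_1$ (so $P$ is linear, idempotent, and $\mathrm{Ran}(P) = W_1$), every $f$ admits a canonical decomposition $f = Pf + (f - Pf)$, where $Pf \in W_1$ and $f - Pf$ lies in the complementary subspace $\ker P$. On the first piece I can apply the given linear antiderivative $A_1$, and on the second piece I can apply $A_T$.

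Concretely, I would define
\[
A(f) := A_1(Pf) + A_T(f - Pf).
\]
Linearity of $A$ then follows immediately from the linearity of $P$, $A_1$, and $A_T$. The antiderivative condition is the main thing to check: differentiating, one gets
\[
(A(f))' = (A_1(Pf))' + (A_T(f-Pf))',
\]
and since $A_1$ is an antiderivative on $W_1$ (so $(A_1(Pf))'$ extends $Pf$) and $A_T$ is an antiderivative on all of $T[\RR,\RR]$ (so $(A_T(f-Pf))'$ extends $f - Pf$), the sum extends $Pf + (f - Pf) = f$, as required.

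Finally, to verify that $A$ extends $A_1$, I would use idempotency: if $f \in W_1$, then $Pf = f$, whence $f - Pf = 0$, and so $A(f) = A_1(f) + A_T(0) = A_1(f)$. The monomial and exponential normalizations required by Definition \ref{Dd2} need to be arranged as part of the setup of $A_1$ (and $A_T$ contributes $0$ on those functions once they are placed in $W_1$ and fixed by $P$), so no additional work is needed provided $W_1$ contains these distinguished functions. The only genuinely delicate point is the existence of the projector $P$: on a general subspace $W_1 \subseteq T[\RR,\RR]$ this requires the axiom of choice to produce a complementary subspace, but this is invoked at the level of the hypothesis ``suppose $P$ is a projector'' rather than as part of the construction here, so the proof itself goes through in NBG$^{-}$.
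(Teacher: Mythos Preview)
Your proof is correct and follows essentially the same approach as the paper: decompose $f = Pf + (1-P)f$ and set $A(f) = A_1(Pf) + A_T((1-P)f)$, then verify linearity, the antiderivative property, and that $A$ restricts to $A_1$ on $W_1$. Your write-up is in fact more explicit than the paper's (which is terse to the point of containing a small notational slip), and your remarks on the monomial/exponential normalizations and on where choice enters are appropriate contextual observations consistent with how the lemma is subsequently applied.
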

\begin{proof}
  We simply write (uniquely) $f\in T([\RR,\RR])$ as $f=Pf+(1-P)f=f_1+f_2$. We then simply check that  $A(f):=A_1(f)+A_T(f)$ has the desired properties; see Lemma \ref{allantider}.
\end{proof}

\begin{Note}{\rm 
 If $W_1$ is finite-dimensional, the existence of $P$ does \emph{not} depend on the axiom of choice. 
}\end{Note}

\begin{proof}[Proof of Theorem \ref{Ad-hoco}]

This follows from Lemma \ref{Lemgen} and the concrete choice of $P$.   We take $P$ to be the Lagrange interpolation polynomial with nodes $x_i=i$ for $i=0,...,n$:
$$L(f)(x)= \sum_{j=0}^{n} f(j) \ell_j(x);\ \ \ell_j(x) := \prod_{\begin{smallmatrix}0\le m\le k\\ m\neq j\end{smallmatrix}} \frac{x-m}{j-m}$$
$L(P)=P$ for any $P$ of degree $<n+1$, by the uniqueness of such a polynomial passing through $n+1$ points (cf. \cite{Nummethods}).
\end{proof}
\begin{proof}[Proof of Theorem \ref{Ad-hoc1}]
This follows from Lemma \ref{Lemgen} with $W_1= \{f\in T[\RR,\RR]:\exists F\in T[\No] \text{ such that} \, F \,\text{is differentiable and} \;(F|_{\RR})'=f\}$ and the linear choice $A(f)=F(x)-F(0)$ for $f\in W_1$. The existence of such a projector uses standard linear algebra and the axiom of choice.
  
\end{proof}
\begin{Note}
 {\rm  One can add a finite set of other ``good'' functions with known antiderivatives, if they exist. The construction of the Lagrange interpolation polynomial amounts to solving systems of linear equations which, for distinct nodes, have nonzero Vandermonde  determinants. For other functions, the nodes have to be chosen carefully. We do not pursue this, as $e^x$ and $e^{2x}$, for example, count as distinct functions, and a finite number of these would not be a significant improvement.}
\end{Note}

\subsubsection{Reduction to the case when the domain is $(x_0,\infty)$}\label{Srestr}
  \begin{enumerate}
  \item  The  functions for which we obtain ``good'' positive results are analytic except at the endpoints (which we understand to include $\pm \infty$).
  \item Assuming $f:(a,b)\to \RR$ is analytic on $(a-\epsilon,b)$, we ask whether it can be extended to $(a,b)\in \No$. If $b<\infty$ is a point of analyticity, then it is known that an extension exists, using the power series centered at $b$ \cite{FO}. Otherwise, the change of variables $g(x)=f[(b-x)/(x-a+1)]$ reduces the question to extending $g$ to positive infinitesimal $x$, which by the further transformation $g(x)=h(1/x)$ maps the problem to extensions from some $x_0\in \RR\cup\{-\infty\}$ past the gap at $+\infty$. 
Similar reasoning applies to antiderivatives.
  \item For the just-said reason, without loss of generality, we will assume that the intervals of interest are of the form $(x_0,\infty), x_0\in \RR\cup\{-\infty\}$, that the singularity is at $+\infty$ and we  will seek extensions and integrals to positive infinite $x\in\No$.
  \end{enumerate}

\subsection{Details of the proof of Theorem \ref{TT18}}\label{Sn4.1}
  Let 
\begin{equation}
  \label{eq:defnu}
 \nu_{\epsilon}:= \nu= 2\|s\|_{\infty}\pi^{-\frac12}\epsilon^{-1}
\end{equation}
and consider the  mollification $ f_{s;\epsilon}(x):=\pi^{-\frac12}\nu \int_{-\infty}^{\infty} e^{-\nu^2(x-t)^2}l_s(t)dt$. By standard complex analysis,  $f_{s;\epsilon}$ is entire, and straightforward estimates show that $\sup_{z\in\CC}|e^{-\nu^2|z|^2}f_{s;\epsilon}(z)|<\infty$. Note that,  by construction,  $\sup_{(t,x)\in\RR^2}|l_s(t)-l_s(x)|\le 2\|s\|_{\infty}|t-x|$. Thus, \eqref{eq:defnu} implies
\begin{multline}\label{mestdif}
  |f_{s;\epsilon}(x)-l_s(x)|=\pi^{-\frac12}\nu\left|\int_{-\infty}^{\infty} e^{-\nu^2(x-t)^2}(l_s(t)-l_s(x))dt\right|\\\le 2\|s\|_{\infty}\pi^{-\frac12}\nu\left|\int_{-\infty}^{\infty} e^{-\nu^2v^2}|v|dv\right|= 2\|s\|_{\infty}\pi^{-\frac12}\nu^{-1}<\epsilon.
\end{multline}
Hence $|F_{s;\epsilon}(x)-L_s(x)|\le \int_0^x|f_{s;\epsilon}-l_s|\le \epsilon x$. Note that $\inf_j s_j\le L_s(x)\le \sup_j s_j$. Since  $\mathsf{E}$ is nonnegative, $\inf_j s_j-\epsilon \le \omega^{-1}(\mathsf{E}(F_{s;\epsilon}))(\omega)\le \sup_j+\epsilon$. Note also that   $|L_{Ts}(x)-L_{s}(x)|\le 2\|s\|$. This implies  $\pm(F_{Ts;\epsilon}-F_{s;\epsilon})=\pm F_{Ts;\epsilon}-L_{Ts}+L_{Ts}-L_s+L_s-F_{s;\epsilon}\le 2\|s\|+2\epsilon x$. Hence $\re(\,\omega^{-1}|\mathsf{E}(F_{Ts;\epsilon}-F_{s;\epsilon})(\omega)|)<2\epsilon$. Since $l_s$ is $\epsilon$-independent,  an $\epsilon/2$ argument and  \eqref{mestdif} imply  $\pm(F_{s;\epsilon}-F_{s;\epsilon'})\le 2|\epsilon'-\epsilon|x$. Then  $\varphi(s):=\lim_{\epsilon\to 0} \re(\,\omega^{-1}\mathsf{E}(F_{s;\epsilon}(\omega)))$ exists,  is linear, shift invariant, and lies between $\inf_j s_j$ and $\sup_j s_j$.

\section{Descriptive set-theoretic results}\label{SLogic}
In this section we prove a result of mathematical logic that is instrumental for establishing some of our more general negative results from \S\ref{S2.2O}  and \S\ref{S2.3O}. We apply it to obtain negative results on extensions and integrations of familiar analytic functions. The bridge between mathematical foundations, in this higher generality, and analysis is illustrated in the proof of Theorem \ref{nonintext}.  As was mentioned in \S\ref{S2.2O}, we show that without stringent conditions governing the behavior of functions at $\infty$, extension or integration functionals exist only in ``trivial cases''.  Indeed, based on growth conditions alone, in the class of analytic functions outside some ball bounded by some $w$ in $\CC$ they exist if and only if $W$ is polynomially bounded, in which case the functions are just polynomials. Moreover, there is no analog of $L^1$ in $\No$ in the sense that even rapidly decaying entire functions do not suffice to ensure the existence of a linear antiderivative to some $\theta>\infty$.

\subsection{Positivity sets}\label{SL1}
The most primitive form of our negative results concern what we call positivity 
sets in the Cantor space $\{-2,-1,0,1,2\}^{\NN}$.

\begin{Definition}{\rm \label{D1.1.} $X$ is the Cantor space  $\{-2,-1,0,1,2\}^{\NN}$. A positivity set in  $X$ is an  $S\subseteq X$ such that the following holds for all  $x,y,z\in X$.
		
		i. If  $x+y = z$ and  $x,y \in S$, then  $z \in S$.
		
		ii. If  $x+y = z$ and  $x,y  \notin  S$, then  $z  \notin  S$. 
		
		iii. Suppose  $x \in X$ is eventually zero. Then  $ x \in S$ if and only if  $x$ has a positive last nonzero term.} \end{Definition}
\begin{Theorem}\label{T1.1.}{\rm  
		i. $\text{NBG}^{-}+\text{DC}$ proves that there is no Borel measurable positivity set in  $X$. 
		
		ii. $\text{NBG}^{-}+\text{DC}$  proves that if there exists a positivity set in  $X$, then there is a set of reals that is not Baire measurable.  
		
		iii. $\text{NBG}^{-}+\text{DC}$  does not prove that there exists a positivity set in  $X$. 
		
		iv.  There is no set-theoretic description with parameters such that
NBG proves the following. There is an assignment of real numbers to
the parameters such that the description uniquely defines a positivity
set in $X$.
This also holds for extensions of NBG by standard large cardinal hypotheses.} \end{Theorem}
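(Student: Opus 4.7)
The strategy across all four parts is to exploit the tension between the additive-closure axioms (i)--(ii) and the rigid specification (iii) on eventually-zero sequences: together they force any positivity set to encode an ``asymptotic sign'' structure analogous to a non-principal ultrafilter on $\NN$ or a Banach limit, both of which are known to resist measurability and explicit definability.

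For part (i), assume a Borel positivity set $S$ exists. The plan is to extract from $S$ a Borel ultrafilter-like object on $\NN$ and invoke the classical fact that every Borel ultrafilter on $\NN$ is principal. Concretely, for $A \subseteq \NN$ let $\chi_A \in X$ be its $\{0,1\}$-valued indicator. Axioms (i)--(ii) applied to the pair $\chi_A,\chi_{A^c}$ with sum $\mathbf{1} = (1,1,\ldots)$ force, depending on whether $\mathbf{1} \in S$, either at least one or at most one of $\chi_A,\chi_{A^c}$ to lie in $S$; the resulting collection $\mathcal{F} = \{A \subseteq \NN : \chi_A \in S\}$ is Borel (since $S$ is) and satisfies partial ultrafilter axioms. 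The hard part is promoting $\mathcal{F}$ to an actual non-principal Borel ultrafilter, which I would do by iterating axioms (i)--(ii) on sums of the form $\chi_A + \chi_B$ and their suitable recombinations to obtain closure under intersection, and by using axiom (iii) applied to eventually-zero indicators to rule out principality. The contradiction then follows from the classical theorem that every Borel ultrafilter on $\NN$ is principal.

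For part (ii), given any positivity set $S$, the plan is to manufacture a Banach limit and invoke Theorem \ref{ZFHB}(1). Given $s \in \ell^\infty$ with $\|s\|_\infty \le 1$, I would encode $s$ as an element $\tilde{s} \in X$ by a rescaling-and-rounding procedure, and define $\varphi(s) = \sup\{c \in \QQ : \tilde{s} - c \cdot \mathbf{1} \in S\}$ using the $S$-verdict on shifted copies. Axioms (i)--(ii) force $\varphi$ to be linear and shift-invariant on $\ell^\infty$, while (iii) guarantees that $\varphi$ reproduces genuinely convergent sequences; thus $\varphi$ is a Banach limit, and Theorem \ref{ZFHB}(1) delivers a set of reals without the Baire property. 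Part (iii) is then immediate: combining part (ii) with the Solovay--Shelah results cited in Theorem \ref{ZFHB}(2), in any model of NBG$^-$+ DC in which every set of reals has the Baire property, no positivity set can exist.

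Part (iv) is the subtlest and will be the main obstacle. I would proceed by a forcing-and-absoluteness argument: any formula $\phi(S,r)$ that NBG proves uniquely defines a positivity set $S_r$ from real parameters $r$ would, by Shoenfield-style absoluteness, yield the same $S_r$ in all generic extensions preserving $r$. But part (ii) shows that any positivity set codes a Banach limit and hence a non-Baire-measurable set of reals, whereas suitable Cohen-forcing extensions over $L[r]$ preserve enough Baire measurability to rule out such objects, so uniqueness fails. The hardest aspect is diagonalizing uniformly across all parameter values $r$, which I would handle via a universal parameter coding and by invoking standard projective absoluteness theorems to obtain the extension to NBG augmented by standard large cardinal hypotheses.
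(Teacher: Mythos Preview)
Your plan for part (ii) does not work as written. The space $X=\{-2,-1,0,1,2\}^{\NN}$ has values in a five-element set, so a ``rescaling-and-rounding'' map $s\mapsto\tilde s$ from $\ell^\infty$ into $X$ destroys all real-valued information about $s$; moreover $\tilde s - c\cdot\mathbf 1$ is not in $X$ unless $c$ is an integer, so the supremum defining $\varphi(s)$ only ranges over five candidates. You cannot build a Banach limit on $\ell^\infty$ from a positivity set on $X$ in this way. The ultrafilter strategy for (i) has a parallel problem: axioms (i)--(ii) give closure only under \emph{pointwise sums that land back in $X$}, which is far weaker than closure under intersection; you acknowledge that ``promoting $\mathcal F$ to a non-principal ultrafilter'' is the hard part, but nothing you wrote indicates how to do it, and in fact the additive structure does not yield the lattice operations you would need. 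For (iv), Shoenfield-type absoluteness applies only to $\Pi^1_2$ statements, not to arbitrary set-theoretic descriptions with real parameters, so the uniqueness-across-generic-extensions step is unjustified.

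The paper's approach is shorter and avoids all of this. For (i) and (ii) simultaneously, one shows directly that $S'=S\cap\{-1,0,1\}^{\NN}$ cannot have the Baire property in $\{-1,0,1\}^{\NN}$. Suppose it did, say $S'\mathbin{\Delta}V$ is meager for some open $V$. Using the twelve bicontinuous coordinatewise permutations of $\{-1,0,1\}^{\NN}$ (the six permutations of $\{-1,0,1\}$ acting on coordinate $0$, crossed with identity or negation on the remaining coordinates), Baire category produces specific elements $f_1,f_2,f_3,g_1,g_2,g_3$ all in $S'$ (or all in its complement, depending on whether $V=\varnothing$) whose first coordinates are prescribed and whose tails satisfy $g_i=-f_1$. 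A suitable sum $f_i+g_j$ is then eventually zero with a last nonzero term of the wrong sign, contradicting axiom (iii) via axioms (i)--(ii). A routine transfer $\{-1,0,1\}^{\NN}\to\{0,1\}^{\NN}\to\RR$ gives a non-Baire set of reals. Part (iii) follows from (ii) plus Solovay--Shelah, as you said. For (iv), rather than absoluteness, one works in a model of ZFC $+$ ``every set of reals set-theoretically definable with real parameters has the Baire property'' (again Solovay--Shelah); in such a model the uniquely-described positivity set would be definable from reals and hence Baire, contradicting the argument for (ii).
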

            \begin{proof}

For i and ii, we argue in $\text{NBG}^{-}+\text{DC}$, and fix a positivity set  $S \subseteq  X=\{-2,-1,0,1,2\}^{\NN}$. We will show that  $S' = S \cap  \{-1,0,1\}^{\NN}$ is not Baire measurable in the Cantor space  $\{-1,0,1\}^{\NN}$. Suppose  $S'$ is Baire measurable in  $\{-1,0,1\}^{\NN}$. Let  $V \subseteq  \{-1,0,1\}^{\NN}$ be open, where  $V \Delta  S'$ is meager.

\begin{Lemma}\label{1.2.} {\rm   $V \ne \emptyset $.}\end{Lemma}

\begin{proof} Suppose  $V = \emptyset $. Then  $S'$ is meager in  $\{-1,0,1\}^{\NN}$. 
	Hence any bicontinuous permutation of  $\{-1,0,1\}^{\NN}$ sends  $S'$ onto a meager set in  $\{-1,0,1\}^{\NN}$. The finite (even countable) intersection of comeager sets in  $\{-1,0,1\}^{\NN}$ is comeager in  $\{-1,0,1\}^{\NN}$, and by the Baire category theorem, comeager sets in  $\{-1,0,1\}^{\NN}$ are nonempty. We use  the six bicontinuous permutations of  $\{-1,0,1\}^{\NN}$ that act coordinatewise and are the identity at coordinates after the first coordinate, and the six bicontinuous permutations of  $\{-1,0,1\}^{\NN}$ that act coordinatewise and are the minus function at coordinates after the first coordinate.  There must be an element of $\{-1,0,1\}^{\NN}\setminus S'$  whose image under all twelve of
 these bicontinuous permutations lies in $\{-1,0,1\}^{\NN}\setminus S'$. In particular, let  $f_1,f_2,f_3,g_1,g_2,g_3 \in \{-1,0,1\}^{\NN}\setminus S'$, where  $f_1,f_2,f_3$ agree on $\NN\setminus \{0\}$,  $g_1,g_2,g_3 = -f_1$ on $\NN\setminus \{0\}$ and $f_1(0),f_2(0),f_3(0)$, $g_1(0),g_2(0),g_3(0)$ are  $-1,0,1,-1,0,1$, respectively. Then  $f_2+g_3 = (1,0,0,...) \in \{-1,0,1\}^{\NN}$,  and   so  $ f_2+g_3 = (1,0,0,...) \in \{-1,0,1\}^{\NN}\setminus S'$, and has a positive last nonzero term, contradicting iii
in the definition of positivity set.

 \end{proof}

\begin{Lemma}\label{1.3.} {\rm   $S' = S \cap  \{-1,0,1\}^{\NN}$ is not Baire measurable in the Cantor space  $\{-1,0,1\}^{\NN}$. }\end{Lemma}

\begin{proof} Since  $V \ne \emptyset $, let  $ \alpha  = (\alpha_0,...,\alpha_k) \in \{-1,0,1\}^{k+1}$ be such that every  $f \in \{-1,0,1\}^{\NN}$ extending  $\alpha $ lies in  $ V$. We work in the Cantor space  $T(\alpha) = \{f \in \{-1,0,1\}^{\NN}$:  $f$ extends  $\alpha \}$. Since  $V \Delta  S'$ is meager,  $(V \cap  T(\alpha)) \Delta  (S \cap  T(\alpha))$ is meager in  $T(\alpha)$. That is,  $T(\alpha) \Delta  (S \cap  T(\alpha))$ is meager in  $T(\alpha)$, and so $ S^* = S \cap  T(\alpha)$ is comeager in  $T(\alpha)$. Using coordinatewise bicontinuous bijections of $ T(\alpha)$ as before, let  $f_1,f_2,f_3,g_1,g_2,g_3 \in S^*$, where  $f_1,f_2,f_3$ agree on  $\{k+2,k+3,...\}, g_1,g_2,g_3 = -f_1$  on $\{k+2,k+3,...\}$, and  $f_1(k+1),f_2(k+1),f_3(k+1),g_1(k+1),g_2(k+1),g_3(k+1)$ are  $-1,0,1,-1,0,1$, respectively. Then  $f_1+g_1 = (2\alpha_0,...,2\alpha_k,-2,0,0,...) \in X$, and so  $f_1+g_1 = (2\alpha_0,...,2\alpha_k,-2,0,0,...) \in S$ and has a
negative last nonzero term, contradicting iii
in the definition of positivity set.
\end{proof}

We have thus shown that  $S \cap  \{-1,0,1\}^{\NN}$ is not Baire measurable in  $\{-1,0,1\}^{\NN}$, and hence not Borel measurable. It is immediate that  $S \subseteq  \{-2,-1,0,1,2\}^{\NN}$  is not Borel measurable, establishing i.

Now since  $\{-1,0,1\}^{\NN}$ and  $\{0,1\}^{\NN}$ are homeomorphic, there is a non Baire measurable set in  $\{0,1\}^{\NN}$. 
As in \cite{JFA}, let $T \subseteq \{0,1\}^{\NN}$ be the set that results from removing from $\{0,1\}^{\NN}$ the elements that are eventually constant. Then $T$ is homeomorphic to $\RR\setminus\QQ \subseteq \RR$. Also since we have removed only countably many points from $\{0,1\}^{\NN}$, there is a subset of $T$ that is not Baire measurable in $T$. Hence there is a subset of $\RR\setminus\QQ$ that is not Baire measurable in $\RR\setminus\QQ$. Hence there is a subset of $\RR$ that is not Baire measurable in $\RR$. This (well-known argument, already given in \cite{JFA}) establishes ii.

\begin{Lemma}\label{1.4.} {\rm  $\text{NBG}^{-}+\text{DC}$ + ``all sets of reals are Baire measurable'' is consistent.}\end{Lemma}

\begin{proof} Since NBG$^-$ + DC is conservative over ZFDC, it suffices to
prove this with ZFDC. See Solovay \cite{Solovay}  and Shelah \cite{Shelah}. Solovay relies
on the consistency of ZFC + ``there exists a strongly inaccessible
cardinal'', whereas Shelah later only relies on the consistency of ZFC.  \end{proof}

We have thus established iii using ii.

\begin{Lemma}\label{L1.5.} {\rm  NBG + ``every set of reals set-theoretically definable with
real parameters is Baire measurable'' is consistent. Here the statement
in quotes is formulated as a scheme. This holds even if NBG is
augmented with standard large cardinal hypotheses (assuming that they
are consistent).
}\end{Lemma}

\begin{proof} 
This is proved in Solovay \cite{Solovay} and Shelah  \cite{Shelah}  with NBG
replaced by ZFC. This is equivalent in light of the conservativity  of
NBG over ZFC.
 \end{proof}

We now establish iv. We use ZFC below in light of the conservativity
of NBG over ZFC.

Let $\varphi(x, v_1, ..., v_k)$ be a formula of ZFC, and assume that ZFC proves

1) $(\exists v_1,...,v_k\in\RR)((\exists! x)(\varphi(x, v_1, ..., v_k))\wedge(\exists x)(\varphi(x, v_1, ..., v_k)\wedge x$ is a
positivity set in $X$)).

By Lemma \ref{L1.5.},  let $M$ be a model of $T$ + ``every set of reals set
theoretically definable with real parameters is Baire measurable'',
where $T$ is either ZFC or an extension of ZFC with standard large
cardinal hypotheses. Since $M$  satisfies ZFC, 1 holds in $M$. Working in
$M$, let $x$ be a positively set $S\subseteq X$  that is set-theoretically definable with real parameters. From the proof of ii, we:

a. Extract a set that is not Baire measurable in $ \{-1,0,1\}^{\NN}$ 
from  $S$.

b. Extract a set that is not Baire measurable in  $\{0,1\}^{\NN}$ from 
a.

c. Extract a set of reals that is not Baire measurable from b.

Thus in $M$, we have a set of reals, definable with real parameters, that is not 
Baire measurable. This contradicts the choice of $M$.       
            \end{proof}

\subsection{Proof of Theorem \ref{nonintext}}\label{RVE}

In this subsection we prove Theorem \ref{nonintext}, using the positivity sets of
Theorem \ref{T1.1.}  to conclude that the space of real-valued entire functions
cannot be naturally extended to the surreals. This is based on a fixed
convenient power series, which we view as a particularly transparent
toy model in advance of our main negative results.

In the next subsection we show that, in order to naturally extend families of real-valued entire functions past $\infty$ into the infinite surreals, strong conditions governing the behavior of the functions at $\infty$ is required.

We begin by recalling  the definitions of extension functional and antidifferentiation 
functional  introduced in the Introduction.

Let  $T[\RR,\mathbb{R}]$ be the real    vector space of all $f:\RR \rightarrow \mathbb{R}$ and   $\mathcal{E}$ be defined by \eqref{eq:defE}.

	\z We now apply Theorem \ref{T1.1.} to establish the following negative results for the space  $\mathcal{E}$   making use of Definitions \ref{DefExtf} and \ref{DefAf} of $\theta$-extension and $\theta$-antidifferentiation functionals.

	 First note that if  $L_f$ is a  $\theta$- extension functional on  $\mathcal{E}$ then  $L_f$ maps real polynomials  $P$ to  $P(\theta)$. Also, if  $L_f$ is a  $\theta$-antidifferention functional on $\mathcal{E} $ then $L_f$ maps real polynomials $P$ to $P^{(-1)}(\theta)$, where $P^{(-1)}$ is the antiderivative of $P$ with constant term $0$, see Definition \ref{Dd2}.

(a)		We use the following map $\rho :X \rightarrow  \mathcal{E}$, where $X = \{-2,-1,0,1,2\}^{\NN}$ as in \S\ref{SL1}. For $\xi\in X$,  $\rho (\xi)$ is the real-valued entire function given by the infinite radius of convergence power series

		\begin{equation}
			\label{eq:sum}
			\sum_{n\ge 0}\xi_n\frac{x^n}{n!}.
		\end{equation}
		The  $n!$ above can be replaced by larger convenient expressions, resulting in slower rates of growth. Let  $L_f$ be a  $\theta$-extension or  $\theta$-antidifferentiation operator on $\mathcal{E}$, where  $\theta $ is infinite. 

(b) Let  
$$S = \{\xi \in X: L_f(\rho (\xi)) > 0\}.$$
 If  $\xi+\eta = \zeta$ from  $X$, then  $L_f(\rho (\xi)) + L_f(\rho (\eta)) = L_f(\rho (\zeta))$. Hence if, furthermore,  $\xi,\eta \in S$, then  $\zeta \in S$. 

Also if, furthermore,  $\xi,\eta  \notin  S$, then  $\zeta  \notin  S$. Also, if any  $\xi \in X$ has a positive last nonzero term then  $\rho (\xi)$ is a polynomial with a positive leading coefficient. Hence  $L_f(\rho (\xi))$ is  $P(\theta)$ or  $A_0P(\theta)$, where  $A_0P$ is the antiderivative of  $P$ with constant term  $0$. Hence in either case,  $L_f(\rho (\xi)) > 0$, and so  $f \in S$. Therefore  $S$ is a positivity set.

We now complete the proof of Theorem \ref{nonintext}. To obtain i, ii, we simply
cite Theorem \ref{T1.1.} ii, iii. To obtain iii, suppose we have such a set-theoretic
description in iii with provability in NBG. Then we obtain a
corresponding set-theoretic description in Theorem \ref{T1.1.} iv with
provability in NBG,
contradicting Theorem \ref{T1.1.} iv. As in Theorem \ref{T1.1.} iv, we can use NBG
augmented by standard large cardinal hypotheses.

\section{Proof of Theorem \ref{2.1T71}}
The forward direction (a) implies (b) is immediate. We now show that
(b) implies (a).

The essence of the proof is to work in NBG, starting with any weight $W$
and three descriptions as in (b), without regard to the NBG
provability condition in (b). We give an explicit construction, from
these three givens, of a set X and prove in NBG that if $\mathcal{E}_W$ does not
consist entirely of polynomials, then $X$ is a positivity set. This
explicit construction is given below.

We now show that (b) implies (a), given this explicit construction.
Let $W$ be as given. Assume (b). Since NBG$^-$ is a conservative extension
of ZF, it suffices to show that ZF proves that  $\mathcal{E}_W$  (see Definition \ref{GC}) consists entirely
of polynomials. By set-theoretic absoluteness, it suffices to show
that ZFC proves that $\mathcal{E}_W$ consists entirely of polynomials. Suppose
this is false, and let $M$ be a countable model of ZFC in which  $\mathcal{E}_W$   does
not consist entirely of polynomials. Let $M'$ be a forcing extension of
$M$ given by Shelah \cite{Shelah} in which all sets of reals definable with real
parameters have the property of Baire. Let $M^*$ be an extension of $M'$ 
satisfying NBG in which all sets of reals set-theoretically definable
with real parameters have the property of Baire, and in which  $\mathcal{E}_W$  does
not consist entirely of polynomials.

We now make the explicit construction given below in $M^*$. We obtain a
positivity set which, in $M$, is set-theoretically definable with real
parameters, and therefore has the property of Baire. This contradicts
Theorem \ref{T1.1.}.

On the analytic side, the first step consists of constructing a rich enough family of functions of a given rate of growth. This is achieved, as illustrated in the previous section, by taking a sample function $f$  with positive Taylor coefficients and the prescribed rate of growth, and generating from it  a set of functions, belonging to  the same growth rate class, obtained by multiplying, in all possible ways, the coefficients of the sample function with $-2,-1,0,1,2$. This is an analytic analog of the space $f^{\Join}$ of \cite{JFA}; in the analytic case $f^{\Join}$ was  generated via Weierstrass products whereas here the similar space $f^{\triangleright}$ is generated by manipulating power series as illustrated in the previous section.

\begin{Proposition}\label{constrphi}{\rm 
		For any weight $W$ such that $\mathcal{E}_W$ contains non-polynomial entire functions, there is a  $\varphi_{W}\in C^{\omega}(\CC)$ with strictly  positive Taylor coefficients at zero (which is thus {\em not} a polynomial), explicitly constructed from $W$,  such that $\varphi_W\in\mathcal{E}_W$, and a positivity set explicitly constructed out of $W$.
	}\end{Proposition}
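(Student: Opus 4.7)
The plan is to build $\varphi_W$ as a Cauchy-envelope series formed directly from $W$, and to derive the positivity set by pushing the given functional through $\{-2,\ldots,2\}$-rescalings of the monomials of $\varphi_W$.

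First I would fix a non-polynomial $g=\sum a_n z^n\in\mathcal{E}_W$, given by hypothesis. Cauchy's inequality combined with $|g(z)|\le CW(|z|)$ yields $|a_n|\,r^n\le CW(r)$ for all $r>0$, and since $g$ is non-polynomial, for every $n$ there is some $m>n$ with $a_m\ne 0$, so
$$W(r)/r^n\ge C^{-1}|a_m|\,r^{m-n}\longrightarrow\infty\qquad(r\to\infty).$$
Together with $W(r)/r^n\to\infty$ as $r\to 0^+$ (because $W(0)>0$ and $n\ge 1$; trivially for $n=0$) and the continuity of $W$, this shows $b_n:=\inf_{r>0}W(r)/r^n$ is strictly positive and attained. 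Setting $c_n:=2^{-n}b_n$ and
$$\varphi_W(z):=\sum_{n=0}^\infty c_n z^n,$$
the bound $c_n r^n\le 2^{-n}W(r)$ gives $|\varphi_W(z)|\le \sum c_n |z|^n\le 2W(|z|)$, so $\varphi_W$ is entire, real valued on $\RR$, has strictly positive Taylor coefficients (hence is not a polynomial), and belongs to $\mathcal{E}_W$. The whole construction is arithmetic in $W$, so available in NBG$^-$.

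Next, given a $\theta$-extension or $\theta$-antidifferentiation functional $L$ on $\mathcal{E}_W$, define $\rho:X\to\mathcal{E}_W$ by $\rho(\xi)(z):=\sum_n \xi_n c_n z^n$, where $X=\{-2,-1,0,1,2\}^{\NN}$. The bound $|\xi_n|\le 2$ gives $\rho(\xi)\in\mathcal{E}_W$ at once. Put $S:=\{\xi\in X:L(\rho(\xi))>0\}$. Linearity of $L$ and of $\rho$ immediately verifies clauses (i) and (ii) of Definition \ref{D1.1.}. For clause (iii), an eventually-zero $\xi$ with last nonzero coordinate $\xi_N$ gives a polynomial $\rho(\xi)$ whose value (respectively antiderivative) at the positive infinite $\theta$ is dominated by the $\theta^N$ (resp.\ $\theta^{N+1}$) term, and thus has the sign of $\xi_N$ since $c_N>0$.

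The main obstacle is guaranteeing that every $c_n$ is strictly positive. This rests on the classical Liouville-type fact that any non-polynomial entire function must grow super-polynomially, which in turn forces $W(r)/r^n\to\infty$ for each $n$ so that the infima $b_n$ are positive and attained. Without the hypothesis that $\mathcal{E}_W$ contains a non-polynomial, some $b_n$ could vanish and $\varphi_W$ could fail to have all positive Taylor coefficients, defeating the plan.
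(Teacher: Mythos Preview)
Your argument is correct and follows essentially the same route as the paper: define $b_n$ as an infimum of $W(r)/r^n$, set $\varphi_W=\sum 2^{-n}b_n z^n$, and build the positivity set from $\{\xi:L(\sum\xi_n c_n z^n)>0\}$. The only noteworthy difference is that you take the infimum over all $r>0$ whereas the paper takes it over $r\ge 1$; your choice actually makes the bound $|\varphi_W(z)|\le 2W(|z|)$ hold uniformly without a separate treatment of small $|z|$, at the cost of handling the $r\to 0^+$ limit (which you do). Your direct Liouville-style argument that $W(r)/r^n\to\infty$ is equivalent to the paper's contradiction via Cauchy's formula.
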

	\begin{proof}
		Let $b_k=\inf_{x\ge 1}x^{-k}W(x)$.  We first show that $b_k>0$. To get a contradiction assume that 
$b_k=0$ for some $k$. Since $W>0$ this means that there exists a sequence $\{r_j\}_{j\in\NN}$ such that $\lim_{j\to\infty}r_j^kW(r_j)=0$. Let $f\in \mathcal{E}_W$. By Cauchy's formula applied on  circles  of radii $r_j$ it follows that  $f^{(n)}(0)=0$  $\forall n>k$, {\em i.e.,} $f$ is a polynomial. 

Let $a_k=b_k 2^{-k}$ and $\varphi(z)=\sum_{k=0}^{\infty} a_k z^k$. Since $b_k>0$, we have $|\varphi(z)|\le \varphi(|z|)$. Let $|z|=\rho$. Then, for all $\rho >0$ we have  $\varphi(\rho)\le \sum_{k=0}^\infty (W(\rho) \rho^{-k})2^{-k}\rho^k =W(\rho)$. In particular, \eqref{eq:grW} holds with $C=1$, and the series of $f$ converges absolutely (in the classical sense) for any $z$.

\begin{Definition}\label{Defstar}
{\rm   If $c\in \left \{-2, -1,0,1,2 \right \}^{\mathbb{N}}$ and $\varphi(z)=\sum_{k=0}^\infty c_k z^k$, then we define 
$$(c\star \varphi)(z)=\sum_{k=0}^\infty c_n a_n z^n.$$
We also define
\begin{equation}
  \label{eq:triangleright}
  \varphi^{\triangleright}=\{c\star \varphi:c\in  X\};\ \ X:=\left \{-2, -1,0,1,2 \right \}^{\mathbb{N}}.
\end{equation}
}\end{Definition}
\begin{Note}{\rm
 For any $c\in  \left \{-2, -1,0,1,2 \right \}^{\mathbb{N}}$,   $\sup_{z\in\CC} |c\star \varphi |(z)\le 2W$ and thus for any $c\in  \left \{-2, -1,0,1,2 \right \}^{\mathbb{N}}$, $c\star\varphi\in\mathcal{E}_W$.
}\end{Note}

\bigskip 

\z
 The next step is to create a positivity set in the sense of of Definition \ref{D1.1.} from $\varphi$. 
Let $\theta>\infty$.  Then, 
\begin{equation}
\label{eq:posset}
c^+:= \left\{ c\in X:c\star \varphi(\theta)>0\right\}
\end{equation}
is a positivity set.  Indeed, a polynomial with a positive real leading coefficient is manifestly positive at $\theta$, and the two linearity conditions are immediate.

The rest of the proof of part (a) of Theorem \ref{2.1T71} shadows the one in (b) of \S\ref{RVE}.
\end{proof}

\begin{Note}
  {\rm The conclusion is, essentially, that for extensions and antiderivatives to exist, $W$ should be such that only polynomials are allowed. }
\end{Note}

\section{Proof of Theorem \ref{expogr.} }

 This is similar to the other negative proofs:  Let $\tilde{W}(x)=x^{\ln x}$ (superpolynomial but sub-exponential),  let $\varphi_{\tilde{W}}$ be as in Proposition \ref{constrphi} and $\varphi_{\tilde{W}}^{\triangleright}$ be as in \eqref{eq:triangleright}. Let $A$ be an antidifferentiation functional on $e^{-W}\varphi_{\tilde{W}}^{\triangleright}$. Note that $e^{-W} x^n \in e^{-W}\varphi_{\tilde{W}}^{\triangleright}$. Straightforward verification shows that $\{\xi\in X:  Af(\theta)<0 \}$ is a positivity set on $X$ (note the signs). The proof continues in the same way a that of Theorem \ref{2.1T71}.

  \begin{Note}
{\rm Our negative results rule  out natural ways of going beyond infinity that do not require further details about the nature of the singularity at infinity. }    
  \end{Note} For part iii, once more we mimic the proof of  Theorems 18 and 19 in \cite{JFA}.

\section{Proof of Theorem \ref{Tmainpos}}\label{S4.3}
\subsection{The class $\mathcal{F}_a$. Existence of extensions}

\begin{Definition}[Extension  by analytic continuation in the finite domain]\label{Ddefan} {\rm  Let $f$ be real-analytic on $[a,\infty)\subset\RR^+$, meaning that for any $x_0\in (a,\infty)$ the series
\begin{equation}
  \label{eq:eqh}
f(x_0+h)= \sum_{k=0}^\infty \frac{f^{(k)}(x_0)}{k!}h^k=: \sum_{k=0}^\infty c_k h^k
\end{equation}
has a positive radius of convergence. 

\subsubsection{Extensions for finite $x\in\No$ } 

Using the fact that we can uniquely write $x=x_0+s$, where $x_0\in\RR$ and $|s|\ll 1$, we define the extension  $\mathsf{E}(f)$ on $(a,\infty)\subset\No$ by
\begin{equation}
  \label{eq:polyns}
 \mathsf{E} (f)(x_0+s)=\sum_{k=0}^\infty \frac{f^{(k)}(x_0)}{k!}s^k=:\sum_{k=0}^\infty c_k s^k.
\end{equation} 
}\end{Definition}
\subsubsection{Extensions for  $x>\infty$.}\label{inftyGap}
\begin{Note}\label{N51}{\rm Taking $z=x^{-1/k}$ in \eqref{PSeq}, the functions in  $\mathcal{F}_a$ have series of the form
  \begin{equation}
    \label{eq:eqz}
    P(1/z)+\sum_{k=1}^{\infty}c_j z^j
  \end{equation}
where 
$\sum_{k=0}^{\infty}c_j z^j$ has a positive radius of convergence and  $P$ is a polynomial. Thus the question of extensions on $\mathcal{F}_a$ reduces to extending analytic functions. Note that, of course, \eqref{eq:eqz} is also absolutely convergent in the sense of Conway. 
  }\end{Note}

\begin{Lemma}\label{LemAnC}
 {\rm   The extension operator $\mathsf{E}$ is linear.  Moreover,  $\mathsf{E}$  has a genetic definition.} 
\end{Lemma}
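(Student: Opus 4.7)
The plan is to dispatch linearity by a direct Limit computation using Conway convergence, then to describe how the Taylor-and-Puiseux recipe can be recast in the $\{L\mid R\}$ form of a genetic definition following Fornasiero~\cite{FO}.

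For linearity, fix $f,g\in\mathcal{F}_a$ and $\lambda,\mu\in\RR$, and first consider a finite $x=x_0+s$ with $x_0\in\RR$ in the analyticity interval and $|s|\ll 1$. The Taylor coefficients at $x_0$ are additive in $f$, so Proposition \ref{sur5} makes each of $\sum c^f_k s^k$, $\sum c^g_k s^k$ and $\sum(\lambda c^f_k+\mu c^g_k)s^k$ absolutely convergent in $\No$, and Proposition \ref{Pcop}(a) applied to partial sums gives $\mathsf{E}(\lambda f+\mu g)(x)=\lambda\,\mathsf{E}(f)(x)+\mu\,\mathsf{E}(g)(x)$. For infinite $x>\infty$, Note \ref{N51} rewrites each element of $\mathcal{F}_a$ as a polynomial in $1/z$ plus an ordinary Puiseux series in $z=x^{-1/k}$ (with $z$ positive infinitesimal); after passing to a common $k$ for $f$ and $g$, Proposition \ref{sur7} and the same Limit argument give linearity on the infinite sector. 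Linearity of the polynomial piece is tautological, so the two cases glue without trouble.

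For the genetic presentation, I would follow Fornasiero~\cite[pp.~72--74]{FO}. At a finite argument $x=x_0+s$, put $P_N(s):=\sum_{k=0}^{N}c_k s^k$ with $c_k=f^{(k)}(x_0)/k!$ and choose Cauchy remainder majorants $\delta_N(s)\to 0$ valid on a closed disc inside the domain of analyticity of $f$; then set
\[
\mathsf{E}(f)(x_0+s)=\bigl\{\,P_N(s)-\delta_N(s),\; x^L \;\bigm|\; P_N(s)+\delta_N(s),\; x^R\,\bigr\},
\]
where $N$ ranges over $\NN$ and $x^L,x^R$ are the $\mathsf{E}(f)$-values at surreals strictly simpler than $x$ in the sense of $<_s$. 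The two bracketed families are respectively increasing and decreasing, their separation $2\delta_N(s)$ tends to zero absolutely, and Proposition \ref{sur3}(i) then forces the simplest surreal squeezed between them to be the absolutely convergent Limit $\sum_{k\ge 0}c_k s^k$, i.e.\ the value from Definition \ref{Ddefan}. The same bracketing template is used for $x>\infty$ after the substitution $z=x^{-1/k}$, augmented by the polynomial contribution of Note \ref{N51}.

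The main obstacle is the bookkeeping required to verify that this $\{L\mid R\}$ presentation is \emph{intrinsic} to $f$: independent of the expansion point $x_0$ chosen inside each chart, of the Puiseux index $k$, of the particular Cauchy majorant $\delta_N$, and of the way the recursion on $<_s$ is organized. Once that intrinsicity---which reduces to the uniqueness clause in Proposition \ref{sur3}(i) together with the absolute-convergence facts in Propositions \ref{sur5}--\ref{sur7}---is in hand, the linearity proved in the first paragraph transfers at once to the genetic form, completing both halves of the lemma.
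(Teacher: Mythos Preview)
Your linearity argument is fine and matches the paper, which simply declares linearity ``immediate'' and moves on; your extra detail with Proposition~\ref{Pcop}(a) is a harmless elaboration.

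The genetic construction you sketch, however, is not the one the paper records.  Following Fornasiero, the paper (Definition~\ref{Def.6}) takes Taylor polynomials
\[
p_n(x,x^{\mathbf o})=\sum_{j=0}^n\frac{f^{(j)}(x^{\mathbf o})}{j!}(x-x^{\mathbf o})^j
\]
centered at the \emph{surreal options} $x^{\mathbf o}$ of $x$, and places each $p_n$ in $S^L$ or $S^R$ according to the sign of the first omitted nonzero term $f^{(N)}(x^{\mathbf o})(x-x^{\mathbf o})^N$.  Everything here depends only on data at strictly simpler surreals, so the definition is manifestly genetic; no remainder estimates enter.  Your scheme instead fixes a real center $x_0$, uses Cauchy majorants $P_N(s)\pm\delta_N(s)$, and tosses in the recursive values $\mathsf{E}(f)(x^{\mathbf o})$ on the side.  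That template is really the one the paper reserves for the Borel-summable case (Definition~\ref{Dtildee=e}), where explicit error bounds against a divergent series are indispensable; in the analytic case treated by Lemma~\ref{LemAnC} it is overkill and also leaves you with the burden you yourself identify---showing that the simplest surreal trapped by the $P_N\pm\delta_N$ is exactly the Conway Limit.  That is not automatic: the intersection of the intervals $[P_N-\delta_N,P_N+\delta_N]$ is a proper class (it contains every $\sum c_ks^k+\epsilon$ with $\epsilon\ll s^n$ for all $n$), so the recursive options must do real work, and you do not verify that they do.  Fornasiero's sign-based placement sidesteps this because the Taylor polynomials themselves, evaluated at options, already squeeze from both sides without any remainder term.

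In short: your linearity proof is acceptable; for the genetic half you have written down a workable-looking but different construction and correctly flagged, without resolving, its main verification step.  The paper's route via Definition~\ref{Def.6} avoids that step entirely.
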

\begin{proof}
The linearity of $\mathsf{E}$ is immediate. The existence of an equivalent genetic definition is known and is discussed in detail in the work of Fornasiero \cite[Pages 72-74]{FO}, a sketch of which is contained in the follow definition. 
\begin{Definition}\label{Def.6}
  {\rm 
(i) In \cite{FO}, the definition of the extension is given by an expression of the form
$$\{S^L|S^R\}$$
where 
$S^L$, $S^R$ are left (resp. right) options in the list of Taylor polynomials
\begin{equation}
  \label{eq:form}
  p_n(x,x^{\bf o})=\sum_{j=0}^n\frac{f^{(j)}(x^{\bf o})}{j!}(x-x^{\bf o})^j
\end{equation}
where $x^{\bf o}$ are left or right options for $x$. We can assume without loss of generality that $f$ is not a polynomial, otherwise the extension is trivial. Let $N=\min\{j>n:f^{(N)}(x^{\bf o})\ne 0\}$. Then,  $ p_n \in S^L$ if $f^{(N)}(x^{\bf o})(x-x^{\bf o})^{N}>0$ and $p_n \in S^R$ otherwise.

(ii) We will use expressions of the form \eqref{eq:form} also in the following setting: $x$ is infinite, $(x-x^{\bf o})$ is finite and $f^{(j)}(x^{\bf o})/f^{(j+1)}(x^{\bf o})$ is infinitesimal.
}\end{Definition}
\end{proof}

\subsection{The class $\mathcal{F}_a$. Existence of antiderivatives}
\subsubsection{Antiderivatives for finite $x\in \No$}
Let $f$ be a real analytic function and $F$ be a real antiderivative of $f$. 
 Of course $F$, the antiderivative of $f$,  is also real analytic, and the problem of defining the integral of $f$  reduces to that of the extension of the real analytic function $F$, since we define the surreal integral of $f$ between finite  $a,x\in \No$ by 
\begin{equation}
  \label{eq:defint}   \int_a^x f(t)dt=\mathsf{E}(F)(x)-\mathsf{E}(F)(a).
\end{equation}
The right side of \eqref{eq:defint} is defined in the previous section, and it is straightforward to check that  \eqref{eq:defint} provides an integral, with finite endpoints, having   the properties specified in Proposition \ref{existint}, where the positivity of the integral holds for  finite surreal numbers.
\subsubsection{Antiderivatives  for $x>\infty$}
Here we use series \eqref{PSeq} from \S{3.4.1}. Assume  \eqref{PSeq} converges for $x>r$, $r_1\in(r,\infty)$ and write
 \begin{multline}
   \label{eq:throughgap}
   \int_{a}^x f(t)dt=\int_{a}^{r_1} f(t)dt+\left(c_{-k}\ln t+\sum_{j\ne -k}\frac{c_j}{1-j/k}x^{-j/k+1}\right)\Bigg |_{r_1}^x\\ =
 \int_a^{\infty} \left ( f(t) -\sum_{j\le -k}c_{j}t^{-j/k}\right) dt
                        +c_{-k}\ln x 
                        +\sum_{j=-M}^{-k} \frac{c_j}{1-j/k} x^{-j/k+1}
 \end{multline}
where the infinite sum is a usual convergent series at ${r_1}$ treated as a  normal form for $x>\infty$. For $a \in \RR^+$,  $\int_a^{\infty}$ is the usual integral from $a$ to $\infty$ of a real function.

If the series in $1/x$ converges starting at $a$, one may readily check that the above amounts to termwise integration of the series.  Using the same transformation, $x^{-1/k}=z$, as in the previous section, we can transform this definition into a genetic one, using the genetic definition of log.

\begin{Proposition}\label{Pgood}{\rm 
The integral thus defined has properties (a)--(g) specified in Proposition  \ref{existint};  moreover,  item (g)  holds for {\em any} finite $a,b\in\No$.
}\end{Proposition}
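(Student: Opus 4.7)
My plan is to exploit the identity $\int_a^x f = \mathsf{E}(F)(x) - \mathsf{E}(F)(a)$, where $F$ is a real antiderivative of $f$ extended to finite surreals by Definition \ref{Ddefan} and extended past the gap at $+\infty$ by interpreting the right-hand side of \eqref{eq:throughgap} as the normal form of $\mathsf{E}(F)(x) - \mathsf{E}(F)(a)$. Most of the required properties then reduce to corresponding statements about $\mathsf{E}(F)$ together with the termwise validity of linear operations and differentiation on convergent Puiseux series, leaving positivity (g) on the full finite range as the substantive step.

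First I would verify that \eqref{eq:throughgap} is independent of the choice of $r_1 \in (r, \infty)$ and glues to the finite-domain definition, since passing from $r_1$ to $r_2$ shifts the classical integral by $\int_{r_1}^{r_2} f$ and shifts the closed-form antiderivative piece by precisely $-\int_{r_1}^{r_2} f$. Given this, (c) is the defining property of $F$, (b) follows from linearity of $\mathsf{E}$ and of termwise operations on the Puiseux tail, and (d) is immediate telescoping. For (a), on the finite side absolute convergence of the Taylor series of $\mathsf{E}(F)$ in Conway's sense (Proposition \ref{sur5}) permits termwise differentiation, giving $\mathsf{E}(F)'(x_0 + s) = \sum k c_k s^{k-1} = \mathsf{E}(F')(x_0 + s) = f^*(x_0 + s)$; past the gap, termwise differentiation of \eqref{eq:throughgap} sends the logarithmic term and each power $x^{-j/k+1}$ to the corresponding Puiseux term of $f$. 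Properties (e) and (f) then follow from (a), (c), and the surreal Leibniz and chain rules, which are themselves termwise statements valid by the same absolute convergence.

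The main obstacle is (g) on the full finite range. Assume $f \geq 0$, so $F$ is real-analytic and non-decreasing; write $a = a_0 + \sigma_a$, $b = b_0 + \sigma_b$ canonically, with $a_0, b_0 \in \RR$ and $\sigma_a, \sigma_b$ infinitesimal. If $a_0 < b_0$, then $\mathsf{E}(F)(b) - \mathsf{E}(F)(a) = [F(b_0) - F(a_0)] + \eta$ with $\eta$ infinitesimal and $F(b_0) \geq F(a_0)$; strict positivity of the real bracket dominates $\eta$, and in the degenerate case $F(b_0) = F(a_0)$ real-analyticity forces $f \equiv 0$ on a neighborhood of $[a_0, b_0]$, so every Taylor coefficient appearing vanishes and the difference is $0$. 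In the remaining case $a_0 = b_0 = c$ with $\sigma_a < \sigma_b$, writing $\tilde g_k = F^{(k)}(c)/k!$ and using $\sigma_b^k - \sigma_a^k = (\sigma_b - \sigma_a)\sum_{i+j=k-1} \sigma_a^i \sigma_b^j$ gives
\[
\mathsf{E}(F)(b) - \mathsf{E}(F)(a) = (\sigma_b - \sigma_a) \sum_{k \geq 1} \tilde g_k \sum_{i + j = k - 1} \sigma_a^i \sigma_b^j = (\sigma_b - \sigma_a)\, \mathsf{E}(\Delta)(\sigma_a, \sigma_b),
\]
where $\Delta(x, y) = [F(c + y) - F(c + x)]/(y - x)$ is the real-analytic divided difference, non-negative on reals by monotonicity of $F$. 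Since $\sigma_b - \sigma_a > 0$, it suffices to show $\mathsf{E}(\Delta)(\sigma_a, \sigma_b) \geq 0$.

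For the substantive subcase $\Delta(0, 0) = f(c) = 0$, let $m_0 \geq 2$ be the order of vanishing of $f$ at $c$ (if $f \equiv 0$ locally, the difference is identically $0$); since $f \geq 0$, $m_0$ is even and $f^{(m_0)}(c) > 0$. The integral representation $\Delta(x, y) = \int_0^1 f(c + x + t(y - x))\, dt$ combined with $f(c + u) = (f^{(m_0)}(c)/m_0!) u^{m_0} + O(u^{m_0 + 1})$ yields
\[
\Delta(x, y) = \frac{f^{(m_0)}(c)}{(m_0 + 1)!} \sum_{i=0}^{m_0} x^i y^{m_0 - i} + R(x, y),
\]
where $R$ is real-analytic near $(0, 0)$ and vanishes to total order $> m_0$. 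The leading sum equals $(\sigma_b^{m_0 + 1} - \sigma_a^{m_0 + 1})/(\sigma_b - \sigma_a)$, which is strictly positive because $m_0 + 1$ is odd—so $u \mapsto u^{m_0 + 1}$ is strictly increasing on any ordered field by the standard sum-of-squares factorization—together with $\sigma_b > \sigma_a$. The remainder $\mathsf{E}(R)(\sigma_a, \sigma_b)$ is strictly smaller in normal-form magnitude than the leading piece, by absolute convergence (Proposition \ref{sur6}) combined with the order-of-vanishing bound; Proposition \ref{Pcop}(f) then lifts the sign inequality from partial sums to the full series, giving $\mathsf{E}(\Delta)(\sigma_a, \sigma_b) > 0$ and completing the proof of (g).
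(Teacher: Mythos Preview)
The paper states Proposition~\ref{Pgood} without proof, so there is nothing to compare against directly; your argument supplies details the authors chose to omit.

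Your reductions for (a)--(f) are sound: once one knows $\int_a^x f = \mathsf{E}(F)(x)-\mathsf{E}(F)(a)$ with $F$ a real antiderivative, (b)--(d) are formal, and (a), (e), (f) reduce to termwise manipulation of absolutely convergent Conway series. The check that \eqref{eq:throughgap} is independent of $r_1$ and glues to the finite-side definition is worth making explicit, as you do.

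The substantive content is (g) for finite surreal endpoints, and your divided-difference argument is correct. A couple of the compressed steps deserve one more sentence each. First, when you invoke ``the standard sum-of-squares factorization'' to get strict monotonicity of $u\mapsto u^{m_0+1}$, what you actually need is that $\sum_{i=0}^{m_0}x^iy^{m_0-i}>0$ for $(x,y)\ne(0,0)$ in any ordered field; this follows cleanly from $(1-t)\sum_{i=0}^{m_0}t^i=1-t^{m_0+1}$ with $m_0+1$ odd, and does not require an explicit SOS decomposition for general $m_0$. Second, ``strictly smaller in normal-form magnitude'' needs a quantitative anchor: with $\mu=\max(|\sigma_a|,|\sigma_b|)$ one has the leading piece bounded below by a positive real multiple of $\mu^{m_0}$ (using the minimum of $\sum t^i$ on $[-1,1]_\RR$ and $m_0$ even), while $|R(\sigma_a,\sigma_b)|\le C\mu^{m_0+1}$ from the coefficient bounds of the convergent series; then $c'\mu^{m_0}-C\mu^{m_0+1}>0$ since $\mu$ is infinitesimal. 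With those two clarifications the argument is complete.

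One small gap in coverage: Proposition~\ref{Pgood} claims (a)--(g) as in Proposition~\ref{existint}, which in principle includes (g) for $b$ past the gap at $+\infty$. You treat only finite endpoints. In the paper's own proof of Proposition~\ref{existint}(g) the argument given (``$(A(f))|_\RR$ is non-decreasing'') likewise only handles real endpoints, so the infinite-$b$ case is left implicit throughout; if you want full coverage you would split $\int_a^b f=\int_a^\infty f+\int_\infty^b f$ via \eqref{eq:throughgap}, the first piece being an ordinary real improper integral of a non-negative function and the second a normal form whose sign is read off from the leading Puiseux term of $f$ at $+\infty$.
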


subsection{Extensions and antiderivatives of EB transseriable functions}
\begin{Definition}\label{D27}{\rm 
(i) Lemma \ref{LEBstr} shows that  EB transseriable functions are analytic for large enough $x$.  Then the definition of $\mathsf{E}(f)$ for finite $x\in \No$ follows the same procedure that we used  for $\mathcal{F}_a$ for finite $x\in \No$.

(ii) For  $x>\infty$ we simply define
		\begin{equation}\label{BorelS}	\mathsf E(T)(x)=  T^*(x):= \sum_{\bfk\ge \bfk_0,l\in\NN}c_{\bfk,l}x^{\boldsymbol{\beta}\cdot \bfk}e^{-\bfk \cdot \boldsymbol{\lambda} x}x^{-l}
		\end{equation}
where  $x$ is an infinite surreal number written in normal form.
}\end{Definition}

In Definition \ref{D27} above, the infinite sum is an absolutely convergent series of normal forms, and thus has a Limit; see \S{2}. Accordingly, for infinite surreal $x$, Definition \ref{D27} in effect defines $\mathsf E(f)(x)$ to be the Limit of the absolutely convergent series of normal forms that arises on the right side of equation \ref{BorelS} by writing $x$ in normal form.

\begin{Proposition}\label{Pseries}{\rm 
Formal level one log-free transseries, see \eqref{trans2},  with real coefficients correspond 1-1 to surreal functions defined for  $x>\infty$, and the structures are isomorphic as differential algebras.}  
\end{Proposition}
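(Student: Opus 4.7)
The plan is to show that the map $\mathsf{E}:\tilde T\mapsto T^{*}$ from Definition \ref{D27}(ii) is a differential-algebra isomorphism between the formal level-one log-free transseries \eqref{trans2} with real coefficients and its image in the ring of surreal functions defined on $\{x\in\No:x>\infty\}$. The four properties to verify are (1) well-definedness of $T^{*}(x)$ as a Conway-absolutely convergent Limit at every infinite $x$, (2) injectivity, (3) preservation of the algebra structure, and (4) compatibility with derivations.

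For (1), I would fix an infinite surreal $x>x_{0}$ written in normal form. The exponential and real-power operations on $\No$ reviewed in \S\ref{S2} make each basic monomial $m_{\bfk,l}(x):=x^{\boldsymbol{\beta}\cdot\bfk}e^{-\bfk\cdot\boldsymbol{\lambda}x}x^{-l}$ a well-defined surreal whose leader has the form $\omega^{y(\bfk,l)}$. The standing hypothesis that $\lambda_{j}>0$ whenever $k_{j}$ ranges through infinitely many values guarantees that $\{y(\bfk,l)\}$ is reverse well-ordered and that for each surreal exponent $y$ only finitely many index pairs $(\bfk,l)$ can affect the $\omega^{y}$-coefficient of $T^{*}(x)$: growth in $\bfk$ is killed by successive $e^{-\lambda_{j}x}$ factors, while growth in $l$ is absorbed into the Conway-summable infinitesimal regime of Proposition \ref{sur5}. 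Absolute convergence in the sense of Conway (Propositions \ref{sur5}--\ref{sur7}) then yields the Limit $T^{*}(x)\in\No$.

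Injectivity and the algebra structure come next. Uniqueness of the surreal normal form, combined with the control on $\{y(\bfk,l)\}$ from step (1), lets one recover each $c_{\bfk,l}$ from $T^{*}(x)$ at any single infinite $x$ by successively peeling off leading coefficients, giving the 1--1 property. Linearity is Proposition \ref{Pcop}(a) applied to partial sums. Multiplicativity follows from Proposition \ref{Pcop}(b) together with the observation that the Cauchy product of two Conway-absolutely convergent series in the $m_{\bfk,l}$ reorganises monomial-by-monomial exactly as the formal transseries product does; the reordering is legitimate because only finitely many index pairs on either side contribute to any fixed surreal exponent.

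The main obstacle will be (4). The plan is first to verify, using Definition \ref{DefDeriv} together with the surreal chain rule for $\exp$ developed by Kruskal, Conway, Bach, Norton and Gonshor (\cite[Ch. 10]{GO}, \cite{vdDE}), the termwise identity $(m_{\bfk,l})'=-\bfk\cdot\boldsymbol{\lambda}\,m_{\bfk,l}+(\boldsymbol{\beta}\cdot\bfk-l)x^{-1}m_{\bfk,l}$, which matches the formal transseries derivative term by term. Commuting surreal differentiation with the Conway-absolute sum is then handled by expanding $T^{*}(x+\delta)-T^{*}(x)$ for infinitesimal $\delta$ monomial-by-monomial via Proposition \ref{sur7}, extracting the $O(\delta)$ coefficient as the sum of termwise derivatives, and controlling the $O(\delta^{2})$ remainder uniformly using the reverse well-ordering from (1). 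Once this is in place the derivation on $\mathsf{E}(\tilde T)$ coincides with the formal transseries derivation transported across $\mathsf{E}$, giving the differential-algebra isomorphism.
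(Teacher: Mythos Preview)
Your outline for (1)--(3) is essentially what the paper does, only spelled out in more detail: the paper simply observes that transseries convergence implies Conway absolute convergence for infinite surreal $x$, and that polynomials in the transmonomials behave identically on both sides, so the algebra structure is automatic. Your explicit injectivity argument via peeling off leading coefficients is a reasonable expansion of what the paper leaves implicit.

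The genuine divergence is in step (4). You propose to verify the termwise derivative identity for each monomial and then pass to the limit by bounding the $O(\delta^{2})$ remainder ``uniformly using the reverse well-ordering.'' This is the right goal, but the mechanism you invoke is vague: reverse well-ordering of the exponent set tells you the sum is a legitimate normal form, not that an infinite sum of second-order remainders stays $o(\delta)$ in the surreal $\epsilon$--$\delta$ sense of Definition \ref{DefDeriv}. The paper sidesteps this entirely with a concrete trick: given $x$ of sign-expansion length $<\alpha$, it manufactures a specific infinitesimal $\delta$ whose sign expansion is a single $+$ followed by $\alpha_{2}$ minuses (where $\alpha_{2}\gg\alpha$ is chosen explicitly). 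For this $\delta$, the terms $\delta^{m}S_{m}(h)$ in the binomial reexpansion of $\sum c_{k}(h+\delta)^{k}$ have \emph{pairwise disjoint supports} for distinct $m$, so Conway convergence of the reexpansion is trivial and the $\delta$-coefficient can be read off directly as the formal derivative. This disjoint-support device is the key simplification you are missing; it replaces your uniform-remainder estimate with a purely combinatorial observation about normal forms.
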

\begin{proof}[Proof of Proposition \ref{Pseries}]\label{PPseries}
An infinite series is a limit of  polynomials. Convergence in the sense of transseries implies absolute convergence in the sense of Conway, when the variable in the transseries is replaced by an infinite surreal number written in normal form. Of course, surreal polynomials with real coefficients have the same algebraic properties as their real counterparts.

For differentiation, we take the restriction of the surreal function to the surreals of length $<\alpha$ (see \S{2}), where $\alpha$ is some limit ordinal. We take $\alpha_1$ to be the least ordinal greater than $n\alpha$ for all $n$ and $\alpha_2$  to be the least ordinal greater than $n\alpha_1$ for all $n$. Consider the infinitesimal $\delta$ whose sign expansion consists of a single plus followed by a string of ${\alpha_2}$ minuses. It is trivial to show that the reexpansion of  $ \sum_{k=0}^\infty c_k (h+\delta)^k$, where we expand  $(h+\delta)^k$ by the binomial formula, is absolutely convergent in the sense of Conway, since the supports of the coefficients of $\delta^mS_m(h)$ are disjoint for different values of $m$. By support we understand as usual the set of ordinals for which $r_{\alpha}\ne0$. The rest is a straightforward calculation. 
\end{proof}
\begin{Proposition}\label{Pe1}{\rm 
(i) The extension operator $\mathsf{E}$,    is a differential field isomorphism between EB transseriable functions and their respective images.

(ii) The definition can be recast as a genetic one.
  }\end{Proposition}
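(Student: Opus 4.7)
The plan is to deduce (i) by assembling the two isomorphisms already built in the paper and (ii) by extending the Fornasiero-style $\{L\mid R\}$ construction from the finite analytic case to the transseries case, following the pattern of the Ei formula \eqref{DefEi1}.

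For part (i), I would factor $\mathsf{E}$ through the space $\mathcal{T}$ of level-one log-free EB transseries, writing
\begin{equation*}
\mathcal{F}\ \xrightarrow{\ \Phi\ }\ \mathcal{T}\ \xrightarrow{\ \Psi\ }\ \mathcal{F}^{*},
\end{equation*}
where $\Phi$ sends an EB-transseriable function to its transseries and $\Psi$ sends a formal transseries to the surreal function given by \eqref{BorelS}. Proposition \ref{consist} supplies $\Phi$ as a differential field isomorphism: the Catalan-average arguments of \cite{Menous} force commutation of EB summation with multiplication, and the comparison with balanced averages in \cite{Duke} propagates this to transseries. Proposition \ref{Pseries} supplies $\Psi$ as a differential algebra isomorphism between formal log-free transseries and their surreal realizations as absolutely convergent (in Conway's sense) series of normal forms. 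I would then verify that $\Psi\circ\Phi=\mathsf{E}$ in the sense of Definition \ref{D27}: on infinite $x$ this is immediate from \eqref{BorelS}, and on finite $x$ one uses that $\Phi(f)$ at a real point equals the genuine Borel sum (Lemma \ref{LEBstr}), whose local Taylor series at $x_0\in\RR$ is precisely the series employed in Definition \ref{Ddefan}; the surreal extension via \eqref{eq:polyns} is then the unique absolutely Conway-convergent series in infinitesimals with those coefficients. Linearity, multiplicativity, preservation of $\exp$, $\log$, $x^r$, and derivation, and bijectivity, all then transfer from $\Phi$ and $\Psi$ to $\mathsf{E}$.

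For part (ii), the finite-$x$ portion is already genetic by Definition \ref{Def.6}. For $x>\infty$ I would mimic \eqref{DefEi1}: as left and right options, I take the partial sums of \eqref{BorelS} restricted to multi-indices with $|\bfk|\le n$, $l\le n$, and further capped by the least-term set $\ell(x)$ of \eqref{eq:defell} in the direction of slowest decay, and I bracket these with a correction of the form $\pm C\, x^{-1/2}x^{\boldsymbol{\beta}\cdot\bfk}e^{-\bfk\cdot\boldsymbol{\lambda}x}$ that strictly dominates the next term in the Conway normal-form order. The sets $S^L,S^R$ are then supplemented with local Taylor-polynomial bounds at finite $x^{\mathbf{o}}$ exactly as in Fornasiero's construction. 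Absolute convergence in Conway's sense (Proposition \ref{Pseries}) guarantees that the simplest surreal strictly between $S^L$ and $S^R$ is the Limit of the partial sums, which is $\mathsf{E}(f)(x)$; hence $\{S^L\mid S^R\}=\mathsf{E}(f)(x)$ is a genetic description.

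The main obstacle I anticipate is verifying compatibility of $\mathsf{E}$ with derivation across the finite/infinite boundary. Differentiation of the surreal series \eqref{BorelS} term-by-term produces the surreal realization of $\Phi(f)'$, and on the real analytic side $\Phi$ intertwines differentiation with the formal derivative on $\mathcal{T}$; Proposition \ref{consist} is exactly what is needed to bridge the two and conclude $[\mathsf{E}(f)]'=\mathsf{E}(f')$ both above the gap and in the finite analytic zone. Care is also required to check that the least-term truncation in the genetic description of (ii) respects the direction-dependent summability assumptions of \S\ref{SDuke}, but this is automatic for the nonresonant ODE setting imposed on $\mathcal{F}$.
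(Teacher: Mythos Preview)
Your approach is essentially the paper's: part (i) is obtained by composing the two differential-algebra isomorphisms of Propositions \ref{consist} and \ref{Pseries}, exactly as you propose via the factorization $\mathcal{F}\xrightarrow{\Phi}\mathcal{T}\xrightarrow{\Psi}\mathcal{F}^*$, and part (ii) is deferred to the subsequent sections (Definition \ref{Dtildee=e}, Proposition \ref{tildee=e}, and Definition \ref{GendT}), which carry out precisely the least-term-truncation-plus-$S^L,S^R$ scheme you sketch. The only cosmetic difference is that the paper's error brackets are $\pm Ce^{-\rho x}x^b$ for a single Borel-summable series and $\pm 2|E_N|$ for the full transseries, rather than your $\pm Cx^{-1/2}x^{\boldsymbol{\beta}\cdot\bfk}e^{-\bfk\cdot\boldsymbol{\lambda}x}$, but the mechanism is the same.
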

\begin{proof}
(i) is straightforward from Propositions \ref{consist} and \ref{Pseries} and (ii) is proved in the subsequent sections.
\end{proof}
 \subsubsection{The integral}We postpone the definition of the integral  of EB transseriable functions to \S\ref{Sintttranss}. 
For the time being, assume $f$ is a  Borel summable function. We first write $f=c/x +g(x)$, where now $g=O(1/x^2)$, and  then we write $\int_a^x f =c\ln s|_{a}^x+\int_a^\infty g-\int_x^\infty g$. 
\begin{Lemma}\label{PBorel}{\rm 
   $\int_x^\infty g$   is also Borel summable. }
\end{Lemma}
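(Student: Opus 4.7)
The plan is to pass through the Borel and Laplace transforms and exploit Fubini. Write $\tilde g=\sum_{k\ge 1} c_k x^{-k-1}$ for the asymptotic series of $g$, and let $G:=\mathcal B\tilde g$, so $G(p)=\sum_{k\ge 1} c_k p^k/k!$. By hypothesis $f$ is Borel summable; since $c/x$ is trivially so and Borel summable series form a differential field (Note \ref{SumLT}), $g=f-c/x$ is Borel summable as well, meaning $G$ has analytic continuation along $\RR^+$ with exponential bounds of some type $\nu$, and $g=\mathcal L G$ on a half-plane $\Re x>\nu$.

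The formal antiderivative of $g$ from $\infty$ is $\widetilde{J}:=\sum_{k\ge 1}(c_k/k)\,x^{-k}$, whose Borel transform is
\begin{equation*}
\mathcal B\widetilde J(p)=\sum_{k\ge 1}\frac{c_k}{k}\cdot\frac{p^{k-1}}{(k-1)!}=\sum_{k\ge 1}\frac{c_k\,p^{k-1}}{k!}=\frac{G(p)}{p}.
\end{equation*}
The key point is that, because $\tilde g$ starts at order $x^{-2}$, the power series $G$ vanishes at $p=0$; hence $G(p)/p$ extends to an analytic function at the origin and inherits the analytic continuation of $G$ on all of $\RR^+$. The exponential bound on $G$ descends to $G/p$ (dividing by $p$ only improves the situation at infinity, and $G/p$ is bounded near $0$), so step (iii) of Definition \ref{DefBE} applies to $\widetilde J$.

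It remains to identify $\mathcal L(G/p)$ with $\int_x^\infty g$. For $\Re x>\nu$, Fubini gives
\begin{equation*}
\int_x^\infty g(t)\,dt=\int_x^\infty\!\!\int_0^\infty e^{-pt}G(p)\,dp\,dt=\int_0^\infty G(p)\,\frac{e^{-px}}{p}\,dp=\mathcal L\!\left(\frac{G}{p}\right)(x),
\end{equation*}
the interchange being justified by the exponential bound on $G$ and the fact that $G(p)/p$ is bounded near $0$. Thus $\int_x^\infty g=\mathcal{LB}\widetilde J$, establishing Borel summability of $\int_x^\infty g$.

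The only subtle point is justifying that $G/p$ still meets the requirements of Definition \ref{DefBE}: analyticity at $0$ (supplied by $G(0)=0$, which is exactly the $O(1/x^2)$ hypothesis on $g$) and the same exponential type as $G$. Everything else is a termwise bookkeeping of series under $\mathcal B$ and $\mathcal L$.
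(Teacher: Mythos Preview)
Your proof is correct and follows essentially the same idea as the paper's: the paper simply writes $\mathcal L^{-1}g=pA(p)$ with $A$ analytic (your $A=G/p$, well-defined since $G(0)=0$) and asserts $\mathcal L^{-1}\int_x^\infty g=A(p)$, which is exactly the identity you verify via Fubini. Your version is a fully fleshed-out account of the paper's one-line argument.
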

\begin{proof}
 We have $\lap^{-1}g=pA(p)$ with $A$ analytic and satisfying the conditions in Definition \ref{DefBE} and $\lap^{-1}\int_x^\infty g=A(p)$, which likewise satisfies the conditions in Definition \ref{DefBE}.  
\end{proof}
\begin{Definition}\label{DDefint}
  {\rm We define
    \begin{equation}
      \label{eq:defint}
      \int_a^x f =c\ln s|_{a}^x+\int_a^\infty g- \mathsf{E}(\int_x^\infty g).
    \end{equation}
}\end{Definition}
\begin{Proposition}
  {\rm The integral  in Definition \ref{DDefint} has properties (a)--(g) in Proposition  \ref{existint}.}
\end{Proposition}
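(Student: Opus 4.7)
The plan is to reduce this to Proposition~\ref{existint} by exhibiting Definition~\ref{DDefint} as $\int_a^x f = A(f)(x) - A(f)(a)$ for a suitable strong antidifferentiation operator $A$. Concretely, for Borel summable $f$ with $f = c/x + g$ and $g = O(1/x^2)$ as in the hypothesis, I set
\begin{equation*}
A(f)(x) := c \ln x - \mathsf{E}\!\left(\int_x^\infty g\right),
\end{equation*}
which is well-defined because Lemma~\ref{PBorel} guarantees that $\int_x^\infty g$ is itself Borel summable and so lies in the domain of $\mathsf{E}$. For any real $a$ in the Borel summability range, property~(i) of an extension operator gives $\mathsf{E}(\int_a^\infty g) = \int_a^\infty g$, so that Definition~\ref{DDefint} rearranges as $\int_a^x f = A(f)(x) - A(f)(a)$. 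Once $A$ is shown to be a strong antidifferentiation operator on the space of Borel summable functions, properties (a)--(g) follow mechanically from Proposition~\ref{existint}.

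First I would verify linearity of $A$: the splitting $f = c/x + g$ is unique (with $c$ the residue at $\infty$ read off from the asymptotic series), so $f \mapsto c$ and $f \mapsto g$ are both linear, and $\mathsf{E}$ is linear by Proposition~\ref{Pe1}(i), hence so is $A$. Next, I would check $A(f)' = f$ in $T[\No]$: since $\mathsf{E}$ is a differential field isomorphism on Borel summable functions by Proposition~\ref{Pe1}, it commutes with differentiation, and Lemma~\ref{PBorel} (via the Borel transforms $\mathcal{L}^{-1}g = pA(p)$ and $\mathcal{L}^{-1}\int_x^\infty g = A(p)$) encodes that the derivative of $\int_x^\infty g$ is $-g$ on the nose. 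Hence $A(f)' = c/x - \mathsf{E}(-g) = c/x + \mathsf{E}(g) = f$, where in the last step we use that $\mathsf{E}$ restricted to $g$ and to $c/x$ recovers them as elements of $T[\No]$ that reassemble into $f$.

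Next I turn to the strong antidifferentiation property: if $F \in T[\No]$ satisfies $(F|_{\RR})' = f$, I need $A(f) = F + C$ for some $C \in \No$. The classical fundamental theorem gives $(A(f)|_{\RR} - F|_{\RR})' \equiv 0$ on the real domain, so the restriction is a genuine real constant $C$. To upgrade this equality from $\RR$ to all of $\No$, I would use that $A(f)$ is manifestly in the image of $\mathsf{E}$ applied to the canonical real antiderivative $c\ln x - \int_x^\infty g$ of $f$, together with the injectivity of $\mathsf{E}$ that follows from its being a differential algebra isomorphism (Proposition~\ref{Pe1}): any two surreal antiderivatives of $f$ in this image differ by a constant, so uniqueness lifts from $\RR$ to $\No$.

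The main obstacle is the transfer of the identity $\tfrac{d}{dx}\int_x^\infty g = -g$ through $\mathsf{E}$ at infinite surreal arguments, where $\int_x^\infty g$ is defined via its Borel--Laplace representation rather than as a literal integral. The delicacy is in ensuring that termwise differentiation of the transseries, once substituted into a normal form of $x > \infty$ as in \eqref{BorelS}, agrees with the extension of the real derivative; this is exactly what the differential-algebra-isomorphism content of Proposition~\ref{Pe1} secures, so the obstruction dissolves once that proposition is invoked. With linearity, $A(f)' = f$, and the strong-antidifferentiation clause in hand, $A$ is a strong antidifferentiation operator on the Borel summable class, and Proposition~\ref{existint} delivers (a)--(g) at once.
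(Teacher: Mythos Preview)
Your proposal is correct and follows the same route the paper takes, only far more explicitly: the paper's own proof is the one-liner ``This follows straightforwardly from Lemma~\ref{PBorel} and the properties of $\mathsf{E}$,'' and what you have written is precisely the unpacking of that sentence---defining $A(f)(x)=c\ln x-\mathsf{E}(\int_x^\infty g)$, checking it is a strong antidifferentiation operator via Lemma~\ref{PBorel} and the differential-algebra isomorphism of Proposition~\ref{Pe1}, and then invoking Proposition~\ref{existint}.
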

\begin{proof}
  This follows straightforwardly from Lemma \ref{PBorel} and the properties of $\mathsf{E}$. 
\end{proof}
\subsection{The genetic construction. Borel summable series}\label{Sec5.5}
Clearly, we only need to show $\mathsf{E}$ has a genetic definition. We  arrange that the series begin with $k=0$, that $\beta=0$, and we let $x>\nu_1$ where $\nu_1$ is both $>\nu$ (cf. \S\ref{Sec5.3}) and large enough so that the results in \cite{CK} apply. By Borel summability, there are  $C,\rho \in\RR^+$ such that $|c_k|\le B_k:=C k! \rho ^{-k}$. The genetic definitions of the exponential and log (for leaders) are standard \cite[chapter 10]{GO} and we will take those for granted. 
\begin{Definition}\label{Dtildee=e}
{\rm   Write (cf. Note \ref{SumLT})
\begin{equation}
  \label{eq:defomega}
  \tilde{\mathsf{E}}(f)(x)=\left\{\sum_{k\in\ell(\rho x)} \frac{c_k}{x^{k+1}}-Ce^{-\rho x}x^b, S^L \Bigg|\sum_{k\in\ell(\rho x)}\frac{c_k}{x^{k+1}}+Ce^{-\rho x}x^b, S^R \right\},
\end{equation}
where 
$S^L,S^R$ are as in Definition \ref{Def.6}, and $C$ is greater than the maximum of 1 and

$$\sup_{x\in\RR, x>1} x^b e^{\rho x}\left|f(x)-\hspace{-.7 em}\sum_{{k\in\ell(\rho x)}}\hspace{-.2 em} \frac{c_k}{x^{k+1}}\right|,$$
the latter of which is finite by \cite{CK}.

For general $\ell$, the sums in the definition of $\tilde{\mathsf{E}}$ are normal forms, but they coincide with the usual sums when $x<\infty$ since then $\ell(\rho x)$ is a finite set.

}\end{Definition}
 
\begin{Note}
{\rm 	
For $x<\infty$, the sets $S^L$ and $S^R$ in Equation \ref{eq:defomega} provide tighter bounds than the corresponding  sums. Indeed, for finite $x$,  it is easy to show that some elements of $S^L (S^R)$ are always greater (resp. less) than the left (resp. right) sum. The relation betwee the sums and $S^L, S^R$ is the opposite for  $x>\infty$, unless $\amalg(x)$ is very small. This is also easy to check. 
}
\end{Note}
\begin{Proposition}\label{tildee=e}
  $\mathsf{E}=\tilde{\mathsf{E}}$.
\end{Proposition}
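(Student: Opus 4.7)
The plan is to treat finite and infinite $x$ separately and, in each case, identify $\tilde{\mathsf{E}}(f)(x)$ as the simplest surreal sitting in the gap cut out by the outer (least-term $\pm$ error-bound) options together with the Fornasiero-style Taylor options $S^L$, $S^R$.

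For finite $x = x_0 + s$, with $x_0 \in \RR$ and $|s| \ll 1$, the index set $\ell(\rho x)$ is finite, and the two outer options collapse to real-valued quantities $\sum_{k \in \ell(\rho x)} c_k x^{-k-1} \mp C e^{-\rho x} x^b$. By the choice of $C$, which incorporates the bound from \cite{CK}, these bracket $f(x_0)$ strictly on $\RR$. The options $S^L, S^R$ are then exactly the local Taylor-polynomial options built from $f$ at predecessors $x^{\bf o} <_s x$, as in Fornasiero \cite[pages 72-74]{FO}. Since the outer options are only real-scale tight while the Taylor options are infinitesimally tight at $x_0$, the latter dominate the simplicity competition and force $\{L|R\}$ to coincide with the locally convergent Taylor expansion of $f$ at $x_0$ evaluated at $s$. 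By Definition \ref{Ddefan} this is $\mathsf{E}(f)(x)$, so $\tilde{\mathsf{E}}(f)(x) = \mathsf{E}(f)(x)$ throughout the finite domain.

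For infinite $x > \nu_1$ we have $\ell(\rho x) = \NN$, and by Propositions \ref{sur5}--\ref{sur7} the formal series $\sum_{k \in \NN} c_k x^{-k-1}$, with $x$ in normal form, is absolutely convergent in the sense of Conway with Limit $\mathsf{E}(f)(x)$ as prescribed by Definition \ref{D27}. The first pair of outer options then reads $\mathsf{E}(f)(x) \mp C e^{-\rho x} x^b$. The key observation is a separation of $\omega$-supports: using Gonshor's identity $e^\omega = \omega^\omega$ and the usual extension of the $\omega$-map, the unique leader of $C e^{-\rho x} x^b$ has negative-infinite $\omega$-exponent, whereas every leader in the normal form of $\mathsf{E}(f)(x)$ has $\omega$-exponent of the form $-(k+1)y$ (with $\omega^y$ the leading monomial of $x$ and $k \in \NN$), and these exponents are all strictly greater. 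Hence Proposition \ref{surtrunk} applies and yields $\mathsf{E}(f)(x) = \bigl\{\mathsf{E}(f)(x) - C e^{-\rho x} x^b \,\bigl|\, \mathsf{E}(f)(x) + C e^{-\rho x} x^b\bigr\}$, so $\mathsf{E}(f)(x)$ is already pinned down by the outer options alone.

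The main obstacle is verifying that the remaining options $S^L, S^R$ in the infinite regime, interpreted under clause (ii) of Definition \ref{Def.6} with $x^{\bf o}$ infinite and $x - x^{\bf o}$ finite, likewise bracket $\mathsf{E}(f)(x)$ on the correct sides without compressing the gap beyond what the outer bounds already dictate. This reduces to two checks: (a) each Taylor polynomial $p_n(x, x^{\bf o})$, with $f^{(k)}(x^{\bf o})$ read off from the Borel sum at a finite-distance anchor, lies on the side of $\mathsf{E}(f)(x)$ specified by the sign of the next nonvanishing derivative---a translated CK-type estimate along the finite displacement $x - x^{\bf o}$, combined with the growth $|c_k| \le C k! \rho^{-k}$ and a termwise normal-form comparison via Proposition \ref{sur4}; and (b) the discrepancy $|p_n(x, x^{\bf o}) - \mathsf{E}(f)(x)|$ remains larger (at the relevant leader) than $C e^{-\rho x} x^b$, so that the outer options continue to govern simplicity. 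Once these are established, Proposition \ref{sur3}(ii) closes the argument and delivers $\tilde{\mathsf{E}}(f)(x) = \mathsf{E}(f)(x)$ on the entire domain.
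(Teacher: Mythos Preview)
Your overall plan---split into finite and infinite $x$, and in each regime identify $\tilde{\mathsf{E}}(f)(x)$ as the simplest surreal in the cut---matches the paper. The finite case is fine. The infinite case, however, contains a genuine gap.

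The problem is your support-separation claim. You assert that every leader in the normal form of $\mathsf{E}(f)(x)=\sum_k c_k x^{-k-1}$ has $\omega$-exponent of the form $-(k+1)y$, hence strictly greater than the (negative-infinite) exponent of $Ce^{-\rho x}x^b$, so that Proposition~\ref{surtrunk} applies. This is only true of the \emph{leading} monomial of $x^{-k-1}$. If the argument $x$ itself carries exponentially small terms in its normal form---for instance $x=\omega+e^{-2\rho\omega}$---then the normal-form expansion of $x^{-1}$ contains a term of order $\omega^{-2}e^{-2\rho\omega}$, whose leader is \emph{smaller} than the leader of $e^{-\rho x}x^b$. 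In that situation Proposition~\ref{surtrunk} does not apply, the outer options do \emph{not} pin down $\mathsf{E}(f)(x)$ as the simplest element of the gap, and your check~(b) is pointed the wrong way: for such $x$ the Taylor options $S^L,S^R$ are strictly \emph{tighter} than the outer $\pm Ce^{-\rho x}x^b$ bounds, not looser.

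The paper handles exactly this by a two-stage decomposition of the infinite regime. First one treats $x_0$ whose simplicity predecessors all lie at positive real distance (equivalently, whose infinitesimal part is not already of exponential type); there the support separation \emph{does} hold, Proposition~\ref{surtrunk} applies, and $\tilde{\mathsf{E}}(f)(x_0)=\sum_k c_k x_0^{-k-1}$. Then a general infinite $x$ is written as $x_0+\epsilon$ with such an $x_0$ as anchor and $\epsilon$ infinitesimal; now it is the Taylor options $S^L,S^R$ at $x_0$ (clause~(ii) of Definition~\ref{Def.6}) that govern, and a formal re-expansion $\sum_j \tilde{\mathsf{E}}(f)^{(j)}(x_0)\epsilon^j$ recovers $\sum_k c_k x^{-k-1}$. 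Your proposal needs this decomposition; a single uniform appeal to Proposition~\ref{surtrunk} does not suffice.
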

\begin{proof}
 First, let ${x_0}>\infty$ be such that $\amalg({x_0})\gg e^{-a{x_0}}$ for all $a\in\RR^+$. In this case, there is an  $a\in\RR^+$ such that $|{x_0}-{x_0}^{\bf{o}}|>a$ and thus dist$(S^L,S^R)>e^{-a{x_0}}$ for all $a\in\RR^+$. Therefore,
\begin{equation}
  \label{eq:atomega}
  \left\{\sum_{\ell(\rho {x_0})} \frac{c_k}{ {x_0} ^{k+1}}-2Ce^{-\rho {x_0}}{x_0}^b, S^L \Bigg|\sum_{\ell(\rho {x_0})} \frac{c_k}{ {x_0} ^{k+1}}+2Ce^{-\rho {x_0}}{x_0}^b, S^R \right\},\\
 \end{equation}
 which, by Proposition \ref{surtrunk}, equals 
  \begin{equation}
  \left\{\sum_{k=0}^\infty \frac{c_k}{ {x_0} ^{k+1}}-2Ce^{-\rho {x_0}}{x_0}^b\Bigg|\sum_{k=0}^{\infty} \frac{c_k}{ {x_0} ^{k+1}}+2Ce^{-\rho {x_0}}{x_0}^b\right\}=\sum_{k=0}^\infty c_k {x_0} ^{-k},
\end{equation}
which in turn implies 
\begin{equation}
  \label{eq:extf}
  \tilde{\mathsf{E}}(f)({x_0})=\sum_{k=0}^{\infty} \frac{c_k}{{x_0}^{k+1}}=\mathsf{E}(f)({x_0}).
\end{equation}
Now, for  $x=x_0+\epsilon$, where $\amalg({x_0})\gg e^{-a{x_0}}$ for all $a\in\RR^+$ and  $\epsilon=\amalg(\epsilon)$,  we  use $S_L$ and $S_R$ as in Definition \ref{Def.6} to obtain
  \begin{equation}
    \label{eq:anext}
    \tilde{\mathsf{E}}(f)(x_0+\epsilon)=\tilde{\mathsf{E}}(f)(x_0)+\sum_{k=0}^\infty \tilde{\mathsf{E}}(f)^{(k)}f(x_0)\epsilon^k,
  \end{equation}
where 
$$\tilde{\mathsf{E}}(f)^{(k)}(x_0)$$
is simply the formal $k$th derivative  of the formal series $\sum_{k=0}^\infty \frac{c_k}{x^{k+1}}$ 
where we take $x=x_0$ and interpret the sum as a (clearly well-defined) normal form. We note that, exactly as in routine manipulations of formal series, combining \eqref{eq:extf} with \eqref{eq:anext} leads to
\begin{equation}
  \label{eq:finalans}
   \tilde{\mathsf{E}}(f)(x)=\sum_{k\in\NN} \frac{c_k}{x^{k+1}}=\mathsf{E}(f)(x) \ \text{for all} \; x>\infty.
\end{equation}
It remains to check that \eqref{eq:anext} coincides with the value that $S^L,S^R$ give. For this, one can mimic the steps of \cite[pages 72--74]{FO}, since the arguments employed there, while presented for convergent series do not rely on convergence. 
 \end{proof}

\subsection{The genetic definition of $\mathsf{E}$  and of integrals of EB transseriable functions}
\subsubsection{Extensions of  functions  that have  (EB)-summable transseries at $\infty$.} 
Here we rely on \'Ecalle analyzability and transseries. See \cite{Book} for more details and references.
\begin{Note}
  {\rm 
We limit our analysis to Gevrey-one EB summable transseries. In fact, the definition of EB-summable transseries allows for any mixture of Gevrey types and any  combination of power series and exponentials. We do not pursue this generalization here. One reason is that it would require to a momentous undertaking due to the many cases that need to be covered and the technical tools involved (\'Ecalle acceleration, \'Ecalle cohesive continuation and so on). Secondly, some functions arising in applications may exhibit oscillatory behavior, and  a general $\CC-$valued theory of transseries is  not expected to exist in any generality; for instance oscillatory transseries do not, in general, have reciprocals. But in limited contexts such as those in \cite{IMRN, Duke}, they {\em do} exist and are well behaved under all operations except division. }
\end{Note}

\subsubsection{Extensions of  functions  that have EB-summable level one  transseries at $\infty$.} 
\begin{Proposition}\label{Pconvt}{\rm 
   \eqref{trans2} converges in the formal multiseries topology (see Definition \ref{defconv})  and is also well-defined as a normal form (with obvious generalizations for sur-complex $\lambda_j$). } \end{Proposition}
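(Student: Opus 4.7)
The plan is to handle the two halves of the statement separately. For convergence in the formal multiseries topology of Definition~\ref{defconv}, I would argue directly from the definition: take any exhaustion $E_1 \subset E_2 \subset \cdots$ of the index set $\{(\bfk,l) : \bfk \ge \bfk_0,\ l \in \NN\}$ by finite subsets, and let $T^{[N]}$ denote the partial sum over $E_N$. Under the standing hypothesis that $k_j\lambda_j$ is unbounded only when $\lambda_j > 0$, the monomials $x^{\boldsymbol{\beta}\cdot\bfk}e^{-\bfk\cdot\boldsymbol{\lambda}x}x^{-l}$ are pairwise distinct generators of the ambient multiseries algebra, so the coefficient of each such monomial in $T^{[N]}$ equals $c_{\bfk,l}$ as soon as $(\bfk,l) \in E_N$ and does not change thereafter; this is precisely the eventual-stabilization condition \eqref{eq:defc}.

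For the normal-form half, I would evaluate \eqref{trans2} at a positive infinite surreal $x$ written in normal form $x = \omega^{y_0}r_0(1+\eta)$ with $y_0 > 0$, $r_0 \in \RR^+$ and $\eta$ infinitesimal, interpreting each factor through the surreal $\exp$, $\log$, and real-power maps recalled in \S\ref{S2}. Each individual term $x^{\boldsymbol{\beta}\cdot\bfk}e^{-\bfk\cdot\boldsymbol{\lambda}x}x^{-l}$ is then a concrete surreal whose normal form consists of a single dominant leader (computed from the purely infinite part of $-\bfk\cdot\boldsymbol{\lambda}x + (\boldsymbol{\beta}\cdot\bfk - l)\ln x$) times a real coefficient times a factor of the form $(1+\zeta_{\bfk,l})^{r}$ with $\zeta_{\bfk,l}$ infinitesimal and $r \in \RR$; that last factor is absolutely convergent in Conway's sense by Proposition~\ref{sur5} applied to the binomial series. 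It remains to show that the formal sum of all these normal forms is itself absolutely convergent: every leader $\omega^y$ receives contributions from only finitely many $(\bfk,l)$, and the aggregated support is reverse-well-ordered.

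The main obstacle, and the structural fact that unlocks the argument, is the exponential stratification of the support. Because $\lambda_j > 0$ forces $e^{-\lambda_j x} \ll x^{-N}$ for every $N\in\NN$, the leader of $e^{-\bfk\cdot\boldsymbol{\lambda}x}$ sits in an Archimedean class strictly below every leader of any power of $x$, and distinct values of $\bfk\cdot\boldsymbol{\lambda}$ give leaders in distinct Archimedean classes. The set $\{\bfk\cdot\boldsymbol{\lambda} : \bfk \ge \bfk_0\}$ is a finitely-generated sub-monoid of an interval $[c,\infty) \subset \RR$ (since $\boldsymbol{\lambda}$ has only finitely many strictly positive coordinates and $\bfk_0$ bounds $\bfk$ from below), hence is reverse-well-ordered under $>$; this transfers to reverse-well-orderedness of the exponential strata of leaders. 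Within each stratum only finitely many $\bfk$ contribute, while the residual $l$-sum $\sum_l c_{\bfk,l}x^{-l}$ is an absolutely-convergent series in the infinitesimal $x^{-1}$ again by Proposition~\ref{sur5}. For the sur-complex generalization, one runs the same stratification with $\Re\lambda_j$ in place of $\lambda_j$, absorbs the unit-modulus oscillatory factor $e^{-i\Im(\bfk\cdot\boldsymbol{\lambda})x}$ into the coefficient, and uses $\Re\beta_j < 1$ in the standard way to keep the power-factor bookkeeping well-posed.
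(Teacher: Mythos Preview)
Your argument is correct, but the paper takes a much shorter route. For the transseries-topology half, the paper simply cites the general treatments in \cite{Book} and \cite{Edgar} rather than unwinding the eventual-stabilization condition explicitly; your direct verification is more self-contained and is fine. The substantive difference is in the normal-form half: where you carry out a bespoke stratification by the values of $\bfk\cdot\boldsymbol{\lambda}$, establish reverse-well-orderedness of the exponential strata, and then invoke Proposition~\ref{sur5} on each residual $l$-sum, the paper observes that the whole of \eqref{trans2} is already a formal power series in the finitely many surreal infinitesimals $h_1=x^{-1}$, $h_{j+1}=x^{\beta_j}e^{-\lambda_j x}$, and applies Proposition~\ref{sur7} (the multivariate Neumann lemma) in one stroke. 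In effect you are reproving the relevant special case of Proposition~\ref{sur7} by hand. Your approach has the advantage of making the support structure explicit (useful if one later wants quantitative control on the leaders), while the paper's approach is cleaner and makes transparent that no analytic input is needed beyond the purely algebraic fact that power series in infinitesimals are always absolutely convergent in Conway's sense.
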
 
\begin{proof}
 For transseries, this is proved in \cite{Book} and also in \cite{Edgar}. For normal forms, in the real-valued case this follows from Proposition \ref{sur7} by taking   $h_1=x^{-1},h_2=x^{\beta_1} e^{-\lambda_1 x}$ $,...,h_{m+1}=x^{\beta_m} e^{-\lambda_m x} $. The surcomplex extension is straightforward.

\end{proof}
Allowing for complex valued transseries, one can analyze all classical functions in analysis, general solutions of linear or nonlinear ODEs which are regular at $\infty$, or have a regular singular point at $\infty$, or can be normalized in such a way that they have a rank one singularity at $\infty$ \cite{Duke}. 

It is shown in \cite{Duke} that there is a one-to-one correspondence between solutions of \eqref{eqor} that go to zero as $x\to\infty$ and EB summable transseries of the form
\begin{equation}
\label{eq:gentr}
\mathbf y=  \sum_{\bfk\in \NN^n }\mathbf{c}_{\bfk}x^{\bfk\cdot\boldsymbol{\beta}+1}e^{-\bfk\cdot\bflam x}\mathbf{y}_{\bfk}(x),
\end{equation}
where the   $\bfy_{\bfk}$ are either zero, or of the form $\mathcal{L}Y_{\bfk}(p)$,
where $\mathcal{L}$ is the Catalan averaged Laplace transform; see the proof of Proposition \ref{consist} as well as \S\ref{3.4.1.} for the conventions governing $\boldsymbol{\beta \;\text{and} \;\lambda}$.
 
 Also,  in accordance  with Definition \ref{DEBstr}, it is shown in \cite{IMRN,Duke} that
\begin{equation}
\label{eq:bnds}
|c_{\bfk}|\sup_{x>x_0}|y_{\bfk}(x)|<\mu^{|\bfk|}
\end{equation}
for some $\mu>0$.

Also,
\begin{equation}\label{eq:fory0}
\bfy_0=O(x^{-M})
\end{equation}
where $M\in\NN$ can be chosen, via elementary transformations, to be as large as needed in the cases of interest, \cite{IMRN,Duke}.
\begin{Note}
	\rm{ 
		For simplicity, we assume that all coefficients and functions $\bfy$ are real-valued. This condition can be easily {\bf eliminated} considering surcomplex numbers, and writing inequalities separately for the real and imaginary parts of the functions involved. 
	}
\end{Note}
\subsubsection{Further assumption} The assumption adpoted here (and in \cite{IMRN}, where, perhaps confusingly, however, $\beta$ is denoted $-\beta$) is that $\Re\beta\in [-1,0)$. This holds for Painlev\'e equations and can be always arranged for in all linear ODEs, though it {\em cannot be simply made to hold in general nonlinear ODEs}. In \cite{Duke} this assumption was dropped, but to work in that generality and make the link to the theory of Catalan averages, one needs \'Ecalle pseudo-decelerations, which would lead to very cumbersome, albeit straightforward, calculations and we will not treat this case here. 

BE transseriable  functions are real-analytic by Lemma \ref{LEBstr}, so the definition of $\mathsf{E}$ for finite $x\in \No$ mimics that of $\mathcal{F}_a$. 

The extension beyond $\infty$ of \eqref{eq:gentr} is an immediate generalization of \eqref{eq:atomega}. Namely, we note that the function series in \eqref{eq:gentr} is {\em classically convergent}. To each of the component functions $\mathbf y_{\bf k}$ \cite{CK} applies --in fact uniformly in $\bfk$. Equation \eqref{eq:bnds} implies that for large $x_0$, all $x>x_0$ and all $N$, there is a $C_N$ such that
\begin{equation}
\label{eq:bds2}
\left| T-  \sum_{|\bfk|\in \{0,...,N\} }\mathbf{c}_{\bfk}x^{\bfk\cdot\boldsymbol{\beta}}e^{-\bfk\cdot\bflam x}\mathbf{y}_{\bfk}(x)\right|\le \max_{ |\bfk|\ge N} C_N x^{\bfk\cdot\boldsymbol{\beta}} e^{-\bfk\cdot\bflam x} =:E_N(x).
\end{equation}
Since the $\mathbf{y}_{\mathbf{k}}$ are EB-summable series, their genetic definition is provided by Definition \ref{Dtildee=e} and Proposition \ref{tildee=e}. This being the case, we may suppose the $\mathbf{y}_{\mathbf{k}}$ have already been defined, in which case we may write

\begin{Definition}\label{GendT}{\rm 
  \begin{equation}
    \label{eq:defte}
    \tilde{\mathsf{E}}(T)=\left\{\sum_{|\bfk|\le N} \mathbf{c}_{\bfk}x^{\bfk\cdot\boldsymbol{\beta}}e^{-\bfk\cdot\bflam x}\mathbf{y}_{\bfk}-2|E_N|, S^L\Bigg|\sum_{|\bfk|\le N }\mathbf{c}_{\bfk}x^{\bfk\cdot\boldsymbol{\beta}}e^{-\bfk\cdot\bflam x}\mathbf{y}_{\bfk}+2|E_N|,S^R\right\}
  \end{equation}
where $S^L, S^R$ are defined as in \S\ref{Sec5.5} above.
}\end{Definition}
\begin{Proposition}
  $\mathsf{E}=\tilde{\mathsf{E}}$.
\end{Proposition}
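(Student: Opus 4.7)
The plan is to imitate the strategy of Proposition \ref{tildee=e} but now in the multi-index setting of level-one transseries, leveraging the fact that the functions $\mathbf{y}_{\bfk}$ are Borel summable and so have already been handled by Proposition \ref{tildee=e}. First, I would fix an infinite $x_0$ that is ``generic'' in the sense of Proposition \ref{tildee=e}: $\amalg(x_0)\gg e^{-a x_0}$ for every $a\in\RR^+$. For such $x_0$ any option $x_0^{\bf o}$ used to build the Taylor-polynomial sets $S^L,S^R$ in \eqref{eq:defte} satisfies $|x_0-x_0^{\bf o}|>a$ for some fixed real $a>0$, so the Taylor corrections are coarser than the remainder bound $2|E_N(x_0)|$ and may be discarded from the options. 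What remains is
\begin{equation*}
\left\{\sum_{|\bfk|\le N}\mathbf{c}_{\bfk}x_0^{\bfk\cdot\boldsymbol{\beta}}e^{-\bfk\cdot\bflam x_0}\mathbf{y}_{\bfk}(x_0)-2|E_N(x_0)|\;\Bigg|\;\sum_{|\bfk|\le N}\mathbf{c}_{\bfk}x_0^{\bfk\cdot\boldsymbol{\beta}}e^{-\bfk\cdot\bflam x_0}\mathbf{y}_{\bfk}(x_0)+2|E_N(x_0)|\right\},
\end{equation*}
where the internal values $\mathbf{y}_{\bfk}(x_0)$ are known from Proposition \ref{tildee=e}.

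Next I would apply Proposition \ref{surtrunk} to each such pair. The uniform Catalan--Borel estimate \eqref{eq:bnds} together with the explicit form $E_N(x)=\max_{|\bfk|\ge N}C_N x^{\bfk\cdot\boldsymbol\beta}e^{-\bfk\cdot\bflam x}$ shows that every exponent of $\omega$ that occurs in the normal form of $2|E_N(x_0)|$ is strictly less than every exponent occurring in the partial-sum $\sum_{|\bfk|\le N}\mathbf{c}_{\bfk}x_0^{\bfk\cdot\boldsymbol\beta}e^{-\bfk\cdot\bflam x_0}\mathbf{y}_{\bfk}(x_0)$ once $N$ is large enough to absorb a prescribed exponent. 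Hence Proposition \ref{surtrunk} collapses \eqref{eq:defte} to precisely the truncation of the normal form obtained from Proposition \ref{Pconvt}; letting $N$ exhaust the exponents yields $\tilde{\mathsf{E}}(T)(x_0)=\sum_{\bfk}\mathbf{c}_{\bfk}x_0^{\bfk\cdot\boldsymbol{\beta}}e^{-\bfk\cdot\bflam x_0}\mathbf{y}_{\bfk}(x_0)=\mathsf{E}(T)(x_0)$.

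Finally, for a general infinite $x=x_0+\epsilon$ with $\epsilon=\amalg(\epsilon)$, I would expand around $x_0$ using the options in $S^L,S^R$. The term-by-term derivatives of the transseries are again EB-summable transseries (by the differential-algebra isomorphism of Proposition \ref{consist}), so the Taylor sets $S^L, S^R$ are well defined genetically from previously constructed values. The resulting Taylor series in $\epsilon$ is absolutely convergent in the sense of Conway by Proposition \ref{sur7}, and the bound-squeezing argument of \cite[pp.~72--74]{FO}, which does not use classical convergence, identifies the $\{L|R\}$ with the sum. Combining the two steps gives $\tilde{\mathsf{E}}(T)(x)=\mathsf{E}(T)(x)$ for all $x>\infty$.

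The main obstacle will be the bookkeeping of exponents in the normal form in the presence of several exponential scales $e^{-\bfk\cdot\bflam x}$ simultaneously: one must verify that for each $N$, $E_N(x_0)$ really does sit at lower normal-form levels than all terms with $|\bfk|\le N$, uniformly enough that Proposition \ref{surtrunk} applies termwise. This is precisely where the assumption $\Re\beta_j\in[-1,0)$, the positivity of all $\lambda_j$ participating infinitely, and the uniform Catalan-averaged estimates of \cite{IMRN,Duke} are needed; without them the tails of different $\bfk$ would interleave with the partial sums and no clean truncation-via-\ref{surtrunk} would be available.
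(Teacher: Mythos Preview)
Your proposal is essentially correct and follows the same strategy as the paper: reduce to a base point where Proposition~\ref{surtrunk} collapses the bracket, then handle infinitesimal perturbations via Taylor options as in \cite{FO}. Two small differences are worth noting. First, the paper chooses its base point slightly differently: it absorbs $e^{-\bfk\cdot\bflam\,\re(x_0)}$ (a real constant) into the $c_{\bfk}$ to reduce to $\re(x_0)=0$, then takes $\amalg(x_0)=0$; the separation of exponential scales then follows at once from $e^{-x}\ll x$ for infinite $x$, giving the ordering of leaders you correctly flag as the main bookkeeping issue. Second, for $x=x_0+\epsilon$ the paper makes one concrete step you elide: it expands $e^{-\bfk\cdot\bflam(x_0+\epsilon)}=e^{-\bfk\cdot\bflam x_0}\sum_{m}(\bfk\cdot\bflam\,\epsilon)^m/m!$ and absorbs this power series into that of $\mathbf{y}_{\bfk}(x_0+\epsilon)$, after which the argument literally reduces to \S\ref{Sec5.5}. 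Your appeal to ``derivatives are again EB-summable'' covers this implicitly but less transparently.
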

\begin{proof}
  The proof mimics the one in \S\ref{Sec5.5}.  Writing $x_0=\re(x_0)+ (x_0-\re (x_0))$ and noting that $e^{-\bfk\cdot\bflam \re(x_0)}$ is a real constant, we can absorb $e^{-\bfk\cdot\bflam x_0}$ into the $c_{\bfk}$ and assume $\re(x_0)=0$.  Now take the first $x_0$ such that $\amalg(x_0)=0$. Then, using the fact that $e^{-x}\ll x$ for all infinite $x$, we see that all the leaders in the normal form of $e^{-\bfk\cdot\bflam x}x^{\bfk\cdot\boldsymbol{\beta}}\mathbf{y}_{\bfk}$ are much smaller than any leader in $e^{-\bfk'\cdot\bflam x}x^{\bfk'\cdot\boldsymbol{\beta}}\mathbf{y}_{\bfk'} \ll x$ if $\bfk'\cdot\bflam>\bfk\cdot\bflam$. The rest of the proof, which concerns the case where $\amalg(x_0)=0$, is the same as in \S\ref{Sec5.5}. If $x=x_0+\epsilon$ with $\amalg(x_0)=0$ (and $\re(x)=\re(x_0)=0$ as above), we write $e^{-\bfk\cdot\bflam x}=e^{-\bfk\cdot\bflam x_0}[1+\bfk\cdot\bflam\epsilon+(\bfk\cdot\bflam\epsilon)^2/2+\cdots]$ and note that the power series can be absorbed in that of $\mathbf{y}_{\bfk}(x_0+\epsilon)$ and once more the proof continues exactly as in \S\ref{Sec5.5}.
\end{proof}

\subsubsection{Integration of surreal transseriable functions}\label{Sintttranss} 
\begin{Proposition}\label{termwise-int}
{\rm A transseries of the form (\ref{eq:gentr}), under the assumptions given there, can be integrated term-by term, resulting in an \'Ecalle-Borel summable transseries again of form (\ref{eq:gentr}). 
        }
\end{Proposition}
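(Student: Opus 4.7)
The plan is to split the transseries (\ref{eq:gentr}) into its $\bfk=0$ block and its $\bfk\ne0$ blocks, integrate each block separately, and then recombine the results, controlling the Gevrey bounds uniformly in $\bfk$. For $\bfk=0$ the summand is $\mathbf{c}_0\,x\,\mathbf{y}_0(x)$ with $\mathbf{y}_0=O(x^{-M})$ by (\ref{eq:fory0}); after choosing $M$ large enough we apply Lemma \ref{PBorel} (possibly preceded by an isolation of a $c/x$ contribution and a log correction, as in Definition \ref{DDefint}) to conclude that $\int_x^{\infty} c_0 s\,\mathbf{y}_0(s)\,ds$ is itself Borel-summable of the same Gevrey-one class, because the Borel transform of $s\mathbf{y}_0(s)$ is of the form $pA(p)$ with $A$ analytic and exponentially bounded, and antidifferentiation corresponds in the Borel plane to division by $p$, which preserves analyticity away from $p=0$ and leaves the exponential lateral bounds intact.

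For $\bfk\ne 0$ the relevant summand is $\mathbf{c}_{\bfk}x^{\bfk\cdot\boldsymbol{\beta}+1}e^{-\bfk\cdot\bflam x}\mathbf{y}_{\bfk}(x)$; because $k_j\lambda_j$ occurs only when $\lambda_j>0$ and $\bfk\ne 0$ lies in the admissible cone, we have $\bfk\cdot\bflam\ge \delta>0$ uniformly. The antiderivative is then obtained by a single integration by parts in the Laplace variable, which at the formal level produces
\begin{equation}
\int x^{\bfk\cdot\boldsymbol{\beta}+1}e^{-\bfk\cdot\bflam x}\mathbf{y}_{\bfk}(x)\,dx\;=\;-\frac{1}{\bfk\cdot\bflam}x^{\bfk\cdot\boldsymbol{\beta}+1}e^{-\bfk\cdot\bflam x}\mathbf{Y}_{\bfk}(x),
\end{equation}
where $\mathbf{Y}_{\bfk}$ is the formal series determined by iterating $\mathbf{Y}_{\bfk}=\mathbf{y}_{\bfk}+(\bfk\cdot\bflam)^{-1}(x^{\bfk\cdot\boldsymbol{\beta}+1}\mathbf{y}_{\bfk})'/x^{\bfk\cdot\boldsymbol{\beta}+1}$. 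The Borel transform of $\mathbf{Y}_{\bfk}$ is obtained from that of $\mathbf{y}_{\bfk}$ by the algebraic operation $p\mapsto p/(p+\bfk\cdot\bflam)$ (up to elementary convolution with powers arising from the $x^{\bfk\cdot\boldsymbol{\beta}+1}$ prefactor), and this operation preserves the \'Ecalle-Borel summability class used in \cite{IMRN,Duke}: singularities are translated but not created, Catalan-averaged continuations survive unchanged, and lateral growth is preserved.

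Once each block has been handled, the output has exactly the shape (\ref{eq:gentr}), with new coefficients $\mathbf{d}_{\bfk}=-\mathbf{c}_{\bfk}/(\bfk\cdot\bflam)$ for $\bfk\ne 0$ and a modified $\mathbf{Y}_{\bfk}$ in place of $\mathbf{y}_{\bfk}$; the $\bfk=0$ block contributes a block of the same form plus at most one $\log x$ summand, which can be incorporated by the standard device of enlarging the resonant lattice by a logarithmic generator (or suppressed by the choice of the finite-part constant). The final step is to verify the uniform bound (\ref{eq:bnds}) for the new data: since $\bfk\cdot\bflam\ge\delta>0$ and integration by parts multiplies the Borel-plane majorant by a bounded factor uniform in $\bfk$, we obtain $|\mathbf{d}_{\bfk}|\sup_{x>x_0}|\mathbf{Y}_{\bfk}(x)|\le \mu_1^{|\bfk|}$ for some $\mu_1>0$, which is the uniform estimate required by Definition \ref{DEBstr}. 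Convergence in the transseries topology (Definition \ref{defconv}) is automatic since the index set and the monomials are unchanged.

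The main obstacle is the fourth step, namely propagating EB-summability through the antidifferentiation operation uniformly in $\bfk$. Termwise integration of one individual $\mathbf{y}_{\bfk}$ is classical, but keeping the Borel majorants, the lateral bounds, and the averaging weights all simultaneously uniform in $\bfk$ requires invoking the isomorphism of differential algebras from Proposition \ref{consist} (so that the Catalan average commutes with the integration procedure carried out in the Borel plane) together with the $\bfk$-uniform exponential bound built into Definition \ref{DEBstr}; absent either of these, one would only get EB-summability of each $\mathbf{Y}_{\bfk}$ separately and not the summability of the whole transseries.
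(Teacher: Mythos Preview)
Your overall strategy matches the paper's: split off the $\bfk=0$ block and handle it via Lemma~\ref{PBorel}/Definition~\ref{DDefint}, then for $\bfk\ne 0$ show that antidifferentiating $x^{\bfk\cdot\boldsymbol{\beta}+1}e^{-\bfk\cdot\bflam x}\mathbf{y}_{\bfk}$ returns a term of the same shape with an EB-summable $\mathbf{Y}_{\bfk}$, and finally appeal to the compatibility of Catalan averages with the Borel-plane operations involved. So the architecture is right.

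Where you diverge from the paper is in the $\bfk\ne 0$ step, and your version is less solid there. You produce $\mathbf{Y}_{\bfk}$ by iterated integration by parts and then assert that in the Borel plane this is ``the algebraic operation $p\mapsto p/(p+\bfk\cdot\bflam)$ (up to elementary convolution with powers arising from the $x^{\bfk\cdot\boldsymbol{\beta}+1}$ prefactor)''. That parenthetical is doing all the work: the convolution with $(a+p)^{-b-1}$-type kernels is precisely the nontrivial part, and it is not a pointwise multiplier, so calling it an algebraic operation on $p$ obscures what has to be checked. The paper instead makes the substitution $f=x^{b}e^{-ax}g$ (with $a=\bfk\cdot\bflam$, $b=\bfk\cdot\boldsymbol{\beta}+1$), which turns the problem into a first-order linear ODE for $g$; Borel-transforming gives $(a+p)G'=(b-1)G-Y'$ with the explicit closed-form solution $G=-(a+p)^{-b}Y-\,b\,Y*(a+p)^{-b-1}$. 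Now the claim that $G$ is EB-summable with the required lateral bounds reduces to two concrete operations---multiplication by a bounded analytic function and convolution---and Menous's theorem is invoked to say these commute with the Catalan average. Your integration-by-parts route is formally equivalent, but it does not isolate the Borel-plane operations cleanly enough to cite Menous directly; as written, the step ``Catalan-averaged continuations survive unchanged'' is an assertion rather than a consequence of a cited result.

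A second, smaller point: you correctly flag the uniform-in-$\bfk$ bound as the crux, but the paper does not actually re-derive a $\mu_1^{|\bfk|}$ estimate from scratch; once $G$ is expressed via multiplication and convolution with $(a+p)^{-b}$ and $(a+p)^{-b-1}$, the uniform exponential bounds come for free from those of $Y$ (and $a=\bfk\cdot\bflam$ only helps). Your discussion of enlarging the lattice by a logarithmic generator for the $\bfk=0$ block is also unnecessary here: the paper simply absorbs that into Definition~\ref{DDefint}, keeping the log outside the transseries proper.
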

\begin{Note}\rm{ 
  The interpretation of a termwise integral on transseriable functions in the usual domain is a generalized integral from  $\infty$, which is the ``common point''. For the surreals, the integral is the {\em extension} of this usual integral and of the Hadamard finite part at $\infty$. 
}\end{Note}
\begin{proof}
  Integrating termwise a uniformly and absolutely convergent (in the classical sense)
    function series  is justified by
  the dominated convergence theorem. For a vector $\bfy$, integration
  is carried out component-wise so we can assume with no loss of generality
  that we are dealing with scalars.  For $y_0$, we simply use  definition \ref{DDefint}. 

  For higher indices $\bfk$, we need to solve
      ODEs of the type
      \begin{equation}\label{eq:higherk}
        f'=x^be^{-ax}y(x).
      \end{equation}
     After
       substituting $f=x^be^{-ax} g$, we obtain
        \begin{equation}\label{eqy1B}
          (a+p)G'=(b-1)G-Y' \Rightarrow G=-(a+p)^{-b}Y-b Y*(a+p)^{-b-1}.
        \end{equation}          
        By \cite{Menous}, the above operations of multiplication by bounded analytic functions and convolution commute with the Catalan averages, and therefore the integral of an EB-transseriable function $T$ is again an EB-transseriable function $T_1$, and we define  $\int T=\mathsf{E}(T_1)$.  Using Proposition \ref{Pe1}, the rest is straightforward. 
\end{proof}
\subsubsection{Classical functions covered} The following class of functions is amenable to extensions to $x\in\mathbf{No}$ where $x>\infty$:  Solutions of generic linear or nonlinear systems of ODEs (with an irregular singularity of rank 1 at $+\infty$) which, after normalization, satisfy  the conditions in \cite{Duke}.  These solutions may, of course, be complex-valued. As special cases we have the classical special functions in analysis: Airy Ai and Bi,  Bessel $I_\alpha$ $J_{\alpha}$ $Y_{\alpha}$ and $K_\alpha$,  Erf,   Ei, Painlev\'e and so on, possibly after changes of variables of the form $f\mapsto x^a f(x^b)$.  
Defining an integral amounts to solving a differential equation $y'=f$. If $f$ is a simple function, the equation reduces to the equations studied in \cite{Duke}.

\subsection{Some examples}\label{SSomexam}
\subsubsection{Ei}
Let  $C\in\RR$ be given by 
$$ C:=\sup_{x\in\RR, x>1} x^{1/2}\left|\mathrm{Ei}(x)-e^x\hspace{-.7 em}\sum_{k\in \ell(x)}\hspace{-.6 em} \frac{k!}{x^{k+1}}\right|,$$
   where $\ell(x)$ is the least term summation of the series defined in Equation \ref{eq:defell}. By \cite{CK}, the sup is finite; in fact, $C=3.54\cdots$. Then, for a surreal $x>1,$ where $\amalg(x)=0$ our definition of Ei is: 
\begin{equation}\label{DefEi}
\mathrm{Ei}(x)=\left\{e^x\hspace{-.7 em}\sum_{k\in \ell(x)}\hspace{-.6 em} \frac{k!}{x^{k+1}}-Cx^{-1/2}, S^L \Bigg|e^x\hspace{-.7em}\sum_{k\in \ell(x)}\hspace{-.6 em}\frac{k!}{x^{k+1}}+Cx^{-1/2}, S^R\right\}.
\end{equation}
Of course, if  $S^{L}(\mathrm{Ei})(x)=\emptyset$ (resp. $S^{R}(\mathrm{Ei})(x)=\emptyset$), then $S^{L}(\mathrm{Ei})(x)\pm \epsilon=\emptyset$ (resp. $S^{R}(\mathrm{Ei})(x)\pm \epsilon=\emptyset$).
The sums in \eqref{DefEi}  are interpreted as normal forms, after possible reexpansion. Indeed, they have finitely many terms if $x\in [1,\infty)$ and otherwise they are the Limits of series of normal forms that are absolutely convergent in the sense of Conway; see Propositions \ref{sur5}--\ref{sur7}. 
Similarly, $$\int_1^x t^{-1}e^t dt=\text{Ei}(1,x)$$ is given by $$\mathrm{Ei}(1,x)= \left\{e^x\hspace{-.7 em}\sum_{k\in \ell(x)}\hspace{-.6 em} \frac{k!}{x^{k+1}}-\mathrm{Ei(1)}-Cx^{-1/2}, S^L \Bigg|e^x\hspace{-.7em}\sum_{k\in \ell(x)}\hspace{-.6 em}\frac{k!}{x^{k+1}}-\mathrm{Ei(1)}+Cx^{-1/2}, S^R\right\}, $$
which leads to 
$$\mathrm{Ei}(\omega)=e^{\omega}\sum_{k=0}^{\infty}\frac{k!}{\omega^{k+1}};\ \mathrm{Ei}(1,\omega)=e^{\omega}\sum_{k=0}^{\infty}\frac{k!}{\omega^{k+1}}-\mathrm{Ei}(1).$$

The just-said value of $\mathrm{Ei}(\omega)$ is (after the reexpansion of terms) a purely infinite Conway normal form with no constant added. Since Ei is an antiderivative, it is defined up to a constant to be determined from the initial conditions. Our calculation shows that for Ei this constant is $0$. 
The fact that Ei corresponds to a purely infinite Conway normal form with constant $0$ at $\infty$ was conjectured by Conway and Kruskal. 

\subsubsection{Erfi} To calculate 
\begin{equation}
\label{eq:erfi}
\int_0^x e^{s^2}ds=\frac{\sqrt{\pi}}{2}\mathrm{erfi}(x)
\end{equation}
we solve
$f'=e^{s^2}$, where $f(0)=0$. With the change of variables $f(s)=s\exp(s^2)g(s)$ with $s=\sqrt{t}$ we get 
\begin{equation}
\label{eq:eqgt}
g'+\(1+\frac{1}{2t}\)g=\frac{1}{2t},\text{ where }\ g(z)=1+o(z)\ \text{as} \ z\to 0,
\end{equation}
whose Borel transform is 
\begin{equation}
\label{eq:eqGp}
(p-1)G'+\frac12 G=0,\  
\end{equation}
where $G(0)=1/2$. But this implies that
\begin{equation}\label{eq:eqGp}
  \frac14\left(\int_0^{\infty-0i}+\int_0^{\infty+0i}\right)\frac{e^{-tp}}{\sqrt{1-p}}dp\\=\frac12\int_0^1\frac{e^{-tp}}{\sqrt{1-p}}dp,
\end{equation}
and hence that the value of $f$ for $x>\infty$ is
\begin{equation}
\label{eq:trerfi}
\int_0^x e^{s^2}ds= \frac{e^{x^2}}{x\sqrt{\pi}}\sum_{n=0}^\infty  \frac{(2n-1)!!}{(2x^2)^n}.
\end{equation}

 \subsubsection{The Gamma function}
Based on the recurrence of the factorial, it is shown in \cite[page 99]{Book} that $\ln\Gamma$ has a Borel summed representation given by,
\begin{equation}
  \label{eq:lngamma}
  \ln\Gamma(n)=n(\ln n -1)-\frac{1}{2}\ln n +\frac{1}{2}\ln(2\pi)+\int_0^{\infty}\frac{\displaystyle
  1-\frac{p}{2}-\Big(\frac{p}{2}+1\Big)e^{-p}}{p^2(e^{-p}-1)}e^{-np}dp,
\end{equation} where the integrand is analytic at zero, as is seen by power series reexpansion at zero. It follows from our results that
\begin{equation}
  \label{eq:eqgamma}
  \ln \Gamma(\omega) =\omega \ln (\omega) - \omega - \tfrac{1}{2} \ln \left (\frac{\omega}{2\pi} \right ) + \frac{1}{12\omega} - \frac{1}{360\omega^3} +\frac{1}{1260 \omega^5}+\cdots \qquad \qquad
\end{equation}
where the coefficient of $\omega^{-k-1}$ is $ \frac{B_k}{k(k-1)}$ and the $B_k$ are the Bernoulli numbers.
By reexpansion we arrive at
\begin{equation}
  \label{eq:Stirling}
 \Gamma(\omega)= \omega^{\omega - \frac{1}{2}} e^{-\omega} \sqrt{2\pi} \left( 1 + \frac{1}{12\omega} + \frac{1}{288\omega^2} - \frac{139}{51840 \omega^3} - \frac{571}{2488320 \omega^4}+\cdots
        \right),
\end{equation}
where there is a closed form expression for the coefficients that is more intricate than that which can be obtained from \eqref{eq:eqgamma}. This is just Stirling's formula. Our results show that (after reexpansion of the series as a normal form) this is valid for all $x>\infty$.
\begin{Note}
  {\rm Clearly, one can the employ the approach used in this paper to define infinite sums. For example, since in classical mathematics $\ln \Gamma$  is a sum of logs indexed by ordinary positive integers, what we have calculated above might reasonably be denoted by
$$\sum_{k=1}^{\omega} \ln k.$$}
\end{Note}
\section{Logical issues}\label{XX}
Most of the paper directly involves {\bf No}, which is a highly
set-theoretic object. Therefore we need to be far more
careful about what axiom systems our developments live in
than we would for most mathematical developments. The
reader who is not interested in logical issues can safely
skip this section.

The issue arises immediately as in almost all mathematical
papers, the underlying formal system can be taken to be
ZFC. In fact, for most papers, it can be very conveniently
taken to be Z (Zermelo set theory), which is ZFC without
the axiom of choice or replacement. Yet here
{\bf No} is taken to be a proper class, too big to be a set.

We delineate two different approaches, each with their
advantages and disadvantages.

1. Literal. Here we use absolutely no coding devices in
order to simulate classes set theoretically, or to simulate
classes of classes as classes, and so forth. We take all of
the objects ``as they come" without reworking them. This
issue arises most vividly with partial functions $f:T[\RR]\to T[\No]$. $T[\RR]$ is unproblematic as it is a set. $T[\No]$ however
consists of (sets and) proper classes, and so under the
literal approach, we already need classes of proper
classes. There is even a problem with individual such $f$,
which from the literal point of view, is perhaps even more
of a problem. This is because $f$ is a set of ordered pairs -
typically $(g,h) = \{\{g\},\{g,h\}\}$, where $g$ is a set and $h$ is a
proper class. Note that $(g,h)$ is a class of classes of
classes. It is clear that NBG is far from sufficient for
the Literal approach.

2. Classes. Here we insist that we work only with the usual
sets and classes underlying the usual formal system NBG. We
take NBG to include the global axiom of choice, as is
customary. We use NBG$^-$ for NBG without any axiom of choice,
even for sets. In this approach, again there is no issue
with regard to the set $T[\RR]$ and its subsets. Also, there is
no problem with the proper class {\bf No}. However, the ordered
field $(\No,<,+,\cdot)$ has a problem. If taken literally, it is an
ordered quadruple of proper classes, and thus at least as
complex as a class of classes of classes, depending on how
quadruples are handled. However, there is a well known
coding device that appropriately simulates any finite
sequence of classes as a class. More generally, there is a
well known coding device that appropriately simulates any
class valued function whose domain is a set, as a class.
This takes care of partial functions  $f:T[\RR]\to T[\No]$.
However, it does not literally handle $T[\No]$ itself, as it
is a class of classes. In class theory, we treat $T[\No]$ as a
virtual class of classes, just as we would treat $\No$ as a
virtual class in the theory of sets. Adhering to set theory
creates some awkwardness that we wish to avoid, because of
$T[\No]$ and partial functions  $f:T[\RR]\to T[\No]$.
For the Literal approach, we use ZCI, which is Zermelo set
theory with the axiom of choice together with ``there exists
a strongly inaccessible cardinal''.

Under the literal approach, for the purpose of this
paper, the sets are the elements of the cumulative
hierarchy $V(\theta)$ up to the first strongly inaccessible cardinal $\theta$. The classes are the elements of $V(\theta+1)$, the
classes of classes are the elements of $V(\theta+2)$, and so forth.
Each $V(\theta+n)$ can be proved to exist in ZCI, but not $V(\theta+\omega)$.

Under either approach, DC is a weak form of the axiom of choice for
sets, called dependent choice. It asserts that for every binary
relation $R$ and every set $x$, there is a sequence $x = x_1,x_2,...$ such
that for all $i\ge 1$, if there exists $y$ such that $R(x_i,y)$, then
$R(x_i,x_i+1)$. NBG$^-$ (or even Z) proves that DC is equivalent to the
Baire category theorem.

NBG, NBG$^-$, NBG$^-$ + DC are conservative extensions of ZFC, ZF, ZFDC = ZF
+ DC, respectively. This means that the former systems are extensions
of the latter systems, and the former systems prove the same sentences
of set theory as the latter systems, respectively.

\section{Acknowledgments} The first author was  partially supported by the  NSF grant  DMS  1108794 and is thankful to Simon Thomas for very useful remarks.

\end{document}